\theoremstyle{plain}
\newtheorem{myth}{Theorem}[section]
\newtheorem{mylm}[myth]{Lemma}
\newtheorem{mypr}[myth]{Proposition}
\newtheorem{myrem}{Remark}[section]
\theoremstyle{definition}
\newtheorem{mydef}[myth]{Definition}
\DeclareMathOperator*{\argmax}{arg\,max}
\DeclareMathOperator*{\argmin}{arg\,min}
\newcommand{\wopt}{W^{y^*,z^*}}
\newcommand{\mc}{\mathcal}
\newcommand{\ul}{\underline}
\newcommand{\ol}{\overline}
\begin{document}
%%%%%%%%%%%%%%%%

	\title{Optimal Ratcheting of Dividends with Irreversible Reinsurance}
	\author{Tim J. Boonen
	\thanks{Department of Statistics and Actuarial Science, School of Computing and Data Science, The University of Hong Kong, Hong Kong, China. Email: tjboonen@hku.hk}
	\and
	Engel John C. Dela Vega
	\thanks{Department of Statistics and Actuarial Science, School of Computing and Data Science, The University of Hong Kong, Hong Kong, China. Email: ejdv@hku.hk}}
	\date{}
	\maketitle

\begin{abstract}
    This paper considers an insurance company that faces two key constraints: a ratcheting dividend constraint and an irreversible reinsurance constraint. The company allocates part of its reserve to pay dividends to its shareholders while strategically purchasing reinsurance for its claims. The ratcheting dividend constraint ensures that dividend cuts are prohibited at any time.   The irreversible reinsurance constraint ensures that reinsurance contracts cannot be prematurely terminated or sold to external entities. The dividend rate and reinsurance level are modeled as nondecreasing processes, thereby satisfying the constraints. Claims are modeled using a Brownian risk model. The main objective is to maximize the cumulative expected discounted dividend payouts until the time of ruin. The reinsurance and dividend levels are restricted to a finite set. The optimal value function is shown to be the unique viscosity solution of the corresponding Hamilton-Jacobi-Bellman equation. A threshold strategy is constructed and shown to be optimal. Finally, numerical examples are presented to illustrate the optimality conditions and optimal strategies.
\end{abstract}

%\noindent\textbf{Keywords}: optimal dividends, ratcheting dividends, irreversible reinsurance, viscosity solutions, HJB equations, stochastic optimal control.

%\bigskip
%\noindent\textbf{AMS MSC2020}: 91G05, 49L25, 35F21, 93E20

\maketitle
%%%%%%%%%%%%%%%%%%%%%%%%%%%%%%%%%%%%%%%%%%%%%%%%%%%%%%%%%%%%%%%%%%%%%%

\section{Introduction}
    The optimal dividend payout problem is a fundamental topic in the field of actuarial science. The foundations of this topic were laid in a seminal paper by \citet{definetti1957}, which explored the optimal way to distribute dividends to the company's shareholders. The key insight is that the optimal dividend payout strategy is the one that maximizes the expected discounted sum of all dividends paid to shareholders up until the company's time of bankruptcy, also called the ruin time. 
	
This optimization problem is critical for financial institutions, including insurance companies and banks, as they must strike a balance between paying dividends to shareholders and holding reserves to cover potential future liabilities. Consequently, \citet{definetti1957} has inspired numerous variations and extensions of the optimal dividend payout problem. An overview of techniques and strategies for solving optimal dividend payout problems is presented in \citet{albrecher2009} and  \citet{avanzi2009}.
	
    The use of control techniques, such as viscosity solutions and the Hamilton–Jacobi–Bellman (HJB) approach, in optimal dividend payout problems is discussed in \citet{azcuebook2014} and \citet{schmidli:controlinsurance}. The case where the dividend rate can be unbounded is also discussed in \citet{schmidli:controlinsurance}. The reserve process is commonly modeled via the classical risk model, sometimes referred to as the Cram\'er-Lundberg model, or the diffusion approximation model. Jump-diffusion processes have also been used to model the reserve process, for example in \citet{belhaj2010}.
	
	The term \emph{ratcheting} is first used in the context of optimal dynamic consumption and investment problems, as discussed in \citet{duesenberry:consumption} and \citet{dybvig1995}, where consumption is not allowed to fall over time and increases proportionately as wealth reaches a new threshold or maximum. The dividend ratcheting constraint is motivated by practitioners’ observations and by the fact that shareholders are generally unhappy about reductions in dividend payments, as discussed in \citet{avanzi2016} for the case of insurance companies. The positive and significant share-price response of insurers to dividend increases is discussed in \citet{akhigbe1993}.
	
	The first time that the dividend ratcheting constraint was introduced in the context of optimal dividend payout problems is in \citet{albrecher2018}, where the dividend rate can be increased only once and must not exceed the drift rate of the original reserve process under the L\'evy risk model and the Brownian risk model. The general case of the ratcheting constraint, where the dividend is allowed to increase more than once during the lifetime of the process, is discussed in \citet{albrecher2020} under the compound Poisson process and in \citet{albrecher2022} under the Brownian risk model. There is more regularity present in the value function under the Brownian risk model; hence, an explicit differential equation was derived and used to identify the candidates for the optimal strategies in the latter work. Some extensions and variations include a habit-formation (or drawdown) constraint with ratcheting dividends as in \citet{angoshtari2019}, %, spectrally negative Lévy risk models until drawdown time as in \cite{zhang2020}, 
 a periodic dividend payout with ratcheting as in \citet{song2023} and \citet{sun2024}, and  ratcheting with capital injection as in \citet{wang2024}.    
 
	\citet{chang1996} argue that reinsurance negotiations are expensive, time-consuming, and irreversible, unlike insurance futures and call spreads, which are standardized and exchange-traded. Reinsurance contracts are structured to provide long‑term stability for the insurer, reflecting a mutual understanding between insurer and reinsurer regarding risk-sharing that cannot easily be modified without significant consequences for either party. With irreversible reinsurance, neither party can prematurely terminate the agreement, nor can the insurer lower the reinsurance coverage at any time. This arrangement benefits the insurer by allowing it to hedge an increasing proportion of its risks, thereby enhancing financial stability and the capacity to manage future uncertainties. 
    
    The modeling of irreversible reinsurance is motivated by problems such as the finite-fuel problem in \citet{bank2005}, the irreversible investment problem in \citet{bertola1994}, and the irreversible liquidation problem in \citet{ferrari2021}, all of which employ techniques for singular control problems. In \citet{yan2022}, they formulate the irreversible reinsurance problem using a singular control problem under a mean-reverting risk exposure process. Their objective function consists of a running cost function generated by the risk exposure and the cost of purchasing new reinsurance contracts. The work of \citet{brachetta2021} proposes an optimal reinsurance problem, wherein a reinsurance contract with some fixed cost is purchased and simultaneously the risk retention level is decided at some random time before maturity. The work of \citet{federico2024} studies the minimization of the flow of capital injections while considering purchasing perpetual reinsurance contracts at certain random times.
	
The optimal dividend payout with proportional reinsurance problem under the diffusion approximation, without the ratcheting dividend constraint and the irreversible reinsurance constraint, is first studied in \citet{hojgaard1999}. Excess-of-loss reinsurance policies have also been considered in optimal dividend payout problems, such as in \citet{asmussen2000}. Impulse control has been used in solving optimal dividend payout with proportional reinsurance policies, such as in \citet{wei2010}, where regime-switching was also introduced.
	
    In this work, we study a company that seeks to optimize dividend payouts subject to a ratcheting dividend constraint and an irreversible reinsurance constraint. This extends the work of \citet{albrecher2022} by incorporating reinsurance and introducing the irreversible reinsurance constraint, resulting in a multi-dimensional stochastic control problem. 
    The rate at which the company pays dividends is modeled as a nondecreasing process, while the level of risk retained by the company is modeled as a nonincreasing process. Consequently, the level of reinsurance is modeled as a nondecreasing process.
	
	The discussion begins by modeling the reserve process via the classical Cram\'er-Lundberg model. We assume that the company's premium rate is computed using the expected value principle and that the company's incoming claims are reinsured using \emph{proportional reinsurance}. Besides analytical tractability, proportional reinsurance offers a more straightforward mechanism for risk-sharing. It is also advantageous to the insurer in the sense that the reinsurer is required to participate in all types of claims. 
    
    We use the result of \citet{grandell1977} to obtain a diffusion approximation of the reserve process with proportional reinsurance. The value function exhibits greater regularity under the diffusion approximation of the Cram\'er-Lundberg model, which significantly enhances the analytical tractability of the problem. This allows for an analytical solution to the HJB equation, which serves as a basis for the construction of the optimal strategies. The diffusion approximation also mitigates the complexities associated with sudden claims, since one of its key assumptions is that the expected number of claims is sufficiently large. The dynamics of the controlled reserve process are then introduced, which is essentially the diffusion approximation of the reserve process reduced by the dividend rate process. The goal is then to maximize the expected discounted dividends paid until ruin time.   
	
	The dynamic programming approach is then used to obtain the HJB equation. We start with the case of constant dividend and reinsurance levels, which yields a characteristic equation that motivates the form of the candidate value function. We then discuss the HJB equation for the case where the dividend and reinsurance levels belong to finite sets. We introduce viscosity solutions because the candidate value function is not necessarily twice continuously differentiable. {The form of the value function is derived using two approaches: backward recursion and scale functions.} The optimal strategy is constructed by determining the appropriate threshold levels for the dividend and reinsurance variables. The order in which these levels are changed must be carefully analyzed, which is one of the main differences with the work of \citet{albrecher2022}.

    {The main contributions of this paper are twofold. First, we obtain a recursive formula for the optimal value function when the reinsurance levels and dividend rates belong to finite sets. This restriction allows a clear construction of the optimal reinsurance and dividend strategies, simplifying insurers' decision-making. For instance, insurers can easily select from a finite set of retention levels that align well with their risk appetite when considering additional reinsurance coverage. Moreover, when presenting risk strategies to stakeholders, the use of finite choices creates a clearer narrative. This clarity can significantly improve discussions around risk management strategies, allowing stakeholders to better understand the implications of various decisions.

    Second, the numerical illustrations in Section \ref{section:examples} provide new insights into the dynamics of optimal retention and dividend threshold levels in the context of ratcheting dividend and irreversibility of reinsurance. We find that, under typical market regimes, it is almost always optimal to increase dividends before transferring additional risk to the reinsurer. We also observe that decreasing the retention level requires relatively high reserves. This result contrasts with the findings of \citet{hojgaard1999}, who study an optimal dividend payout problem involving proportional reinsurance without imposing constraints on dividends and reinsurance. In their study, optimal retention levels decrease as the reserve level rises. In our framework, however, the irreversibility constraint requires insurers to increase their reserves before considering an increase in their reinsurance coverage.
    
    A key factor influencing this dynamic is the ratcheting constraint on dividend rates. Increasing dividend rates results in a lower drift under the diffusion approximation model for the reserve process, effectively slowing the growth of the reserve level. This interplay between the two constraints highlights their impact on optimal strategies and offers a fresh and valuable perspective. It also provides actionable insights that may meaningfully influence insurers’ strategic decisions in practice.}
	
	The paper is organized as follows. Section \ref{section:model} discusses the model for the reserve process and the properties of the value function. Section \ref{section:hjb} discusses the main results. The derivation of the optimal strategies is discussed in Section \ref{section:optimalstrat}. Numerical illustrations are presented in Section \ref{section:examples}. Section \ref{section:conc} concludes. Appendix \ref{appendix:a} provides the intermediate lemmas, propositions, and their proofs, while Appendix \ref{appendix: b} contains the proofs of the main results. An alternative derivation of the form of the optimal value function is provided in Appendix \ref{appendix:c}.

\section{Model}\label{section:model}	
	Consider a complete probability space $(\Omega,\mathcal{F},\mathbb{F},\mathbb{P})$, where the filtration $\mathbb{F}:=\{\mathcal{F}_t\}_{t\geq 0}$ is the $\mathbb{P}$-augmentation of the natural filtration $\{\mathcal{F}_t^W\}_{t\geq 0}$ generated by a standard Brownian motion $\{W_t\}_{t\geq 0}$. We consider an insurance company that simultaneously uses part of its reserve to pay dividends to shareholders and strategically reinsures some of its claims.
	
	Suppose that the company's reserve process follows the classical Cramér-Lundberg model given by $R_t=x+pt-\sum_{i=1}^{N_t}Z_i$,	where $x>0$ is the initial reserve, $p>0$ is the premium rate on the risk that the company is insuring, $\{N_t\}_{t\geq 0}$ is a Poisson process modeling claim frequency with intensity parameter $\lambda>0$, and $\{Z_i\}_{i=1,2,\ldots}$ is a series of positive-valued, independent and identically distributed (i.i.d.) random variables with $Z_i$ denoting the size of the $i$th loss. We assume that $\{N_t\}_{t\geq 0}$ and $\{Z_i\}_{i=1,2,\ldots}$ are independent under $\mathbb{P}$. We further suppose that the i.i.d. random variables $Z_1,Z_2,\ldots$ have a common distribution with finite mean $\mu_0$ and finite second moment $\sigma_0^2$.
	
	We assume that the company's premium rate $p$ is calculated using the expected value principle with relative safety loading $\gamma>0$, that is, $p=(1+\gamma)\lambda\mu_0$. We further assume that the claims are reinsured by proportional reinsurance with retention level $A\in[0,1]$. For a claim $Z_i$, the company pays $AZ_i$ and the reinsurer pays $(1-A)Z_i$. Suppose that the reinsurance premium $p^A$ is calculated using the expected value principle with relative safety loading $\theta>0$, that is, $p^A=(1+\theta)(1-A)\lambda\mu_0$. The reserve process with proportional reinsurance, denoted by $R^A$, can then be written as
	\begin{equation*}
		R^A_t=x+(p-p^A)t-A\sum_{i=1}^{N_t}Z_i=x+\left[A(1+\theta)-(\theta-\gamma)\right]\lambda\mu_0t-A\sum_{i=1}^{N_t}Z_i.
	\end{equation*}
    The company can then choose a reinsurance strategy $A=\{A(t)\}_{t\geq 0}$, where $A(t)$ 
 represents the proportion of the risk retained by the insurer, or simply the \emph{retention level}, at time $t$. 
	Following \citet{grandell1977}, the diffusion approximation of the reserve process, denoted by $X=\{X_t\}_{t\geq 0}$ is given by
	\begin{equation}\label{eqn: diff approx}
		X_t=x+\int_0^t\left[\mu A(s)-b\right]ds+\int_0^t\sigma A(s)dW_s,
	\end{equation}
    where $\mu:=\theta\lambda\mu_0>0$, $b:=(\theta-\gamma)\lambda\mu_0$, and $\sigma:=\sqrt{\lambda\sigma_0^2}>0$.

	\begin{myrem}
		The diffusion approximation of the reserve process \eqref{eqn: diff approx} has a form that also works for the following premium principles:
		\begin{itemize}
			\item Standard Deviation Premium Principle: $p=\lambda\mu_0+\gamma\sqrt{\lambda\sigma_0^2}$ and $p^A=\lambda(1-A)\mu_0+\theta(1-A)\sqrt{\lambda\sigma_0^2}$ implies $\mu=\theta\sqrt{\lambda\sigma^2_0}$, $b=(\theta-\gamma)\sqrt{\lambda\sigma_0^2}$, and $\sigma=\sqrt{\lambda\sigma_0^2}$.
			\item Modified Variance Principle: $p=\lambda\mu_0+\gamma\frac{\sigma_0^2}{\mu_0}$ and $p^A=\lambda(1-A)\mu_0+\theta(1-A)\frac{\sigma_0^2}{\mu_0}$ implies $\mu=\theta\frac{\sigma_0^2}{\mu_0}$, $b=(\theta-\gamma)\frac{\sigma_0^2}{\mu_0}$, and $\sigma=\sqrt{\lambda\sigma_0^2}$.
		\end{itemize}
	\end{myrem}
	
	Let $C(t)$ be the rate at which the company pays dividends at time $t$. When applying the policy $\pi:=(A,C)$, let $\{X_t^{\pi}\}_{t\geq 0}$ be the controlled reserve process. The dynamics for $X_t^{\pi}$ can then be written as
	\begin{equation*}
		\begin{cases}
			dX_t^{\pi}=\left[\mu A(t)-b\right]dt+\sigma A(t)dW_t-C(t)dt,\\
			X_0^{\pi}=x.
		\end{cases}
	\end{equation*}
	
	{Suppose that the retention level belongs to a set $\mathscr{A}\subseteq [\underline a,1]$, where $\underline a \in [0,1]$ is the minimum retention level possible and satisfies $\underline a \in \mathscr{A}$, while the dividend rates belong to a set $\mathscr{C}\subseteq[0,\ol{c}]$, where $0\leq \ol{c}\in\mathscr{C}$ is the maximum dividend rate possible. Given an initial reserve $X(0)=x\geq 0$, a maximum initial retention level $a\in \mathscr{A}$, and a minimum initial dividend rate $c\in\mathscr{C}$, the policy $\pi$ is said to be admissible if
	\begin{enumerate}
		\item[(i)] the processes $A:=\{A(t)\}_{t\geq 0}$ and $C:=\{C(t)\}_{t\geq 0}$ are adapted to $\mathbb{F}$,
		\item[(ii)] $A$ is nonincreasing, right-continuous, and $A(t)\in\mathscr{A}$ for all $t\geq 0$,
		\item[(iii)] $C$ is nondecreasing, right-continuous, and $C(t)\in\mathscr{C}$ for all $t\geq 0$.
	\end{enumerate}
	 Denote by $\Pi^{\mathscr{A},\mathscr{C}}_{x,a,c}$ the set of all admissible policies. 
    \begin{myrem}\label{remark on inclusion}
        A maximum initial retention level $a\in \mathscr A$ and a minimum initial dividend rate $c\in\mathscr C$ mean that $\underline a\leq A(0) \leq a$ and $c\leq C(0) \leq \ol c$. This further implies that given $a_1,a_2\in\mathscr A$ with $a_1\geq a_2$, we have $\Pi^{\mathscr{A},\mathscr{C}}_{x,a_2,c}\subseteq\Pi^{\mathscr{A},\mathscr{C}}_{x,a_1,c}$ for any $x\geq 0$ and $c\in\mathscr C$. In a similar way, given $c_1,c_2\in\mathscr C$ with $c_1\leq c_2$, we have $\Pi^{\mathscr{A},\mathscr{C}}_{x,a,c_2}\subseteq\Pi^{\mathscr{A},\mathscr{C}}_{x,a,c_1}$ for any $x\geq 0$ and $a\in\mathscr A$. It is important to note that this inclusion property will not hold if the definition of admissible policies requires $A(0) = a$ and $C(0) = c$. For clarity, we also mention that $a=\argmax \mathscr A$ and $c=\argmin \mathscr C$ do not necessarily hold.
    \end{myrem}
    \begin{myrem}
        The maximum dividend rate $\overline c$ establishes an upper bound on the dividend rates. On the other hand, the minimum retention level $\underline a$ ensures that retention levels do not fall below this threshold. Consequently, if $\underline a>0$, then ``full reinsurance" is not possible regardless of the reserve level.
    \end{myrem}}

	 Given $\pi=(A,C)\in\Pi^{\mathscr{A},\mathscr{C}}_{x,a,c}$, the value function is given by
	 \begin{equation*}
	 	J(x;A,C)=\mathbb{E}\left[\int_0^{\tau_{\pi}}e^{-qs}C(s)ds\right],
	 \end{equation*}
	 where $q>0$ is the discount factor and $\tau_{\pi}:=\inf\{t\geq0:X_t^{\pi}<0\}$ is the ruin time. For any initial reserve $x\geq 0$, initial retention level $a$, and initial dividend rate $c$, the goal is to find the optimal value function defined as
	 \begin{equation}\label{value function orig}
	 	V^{\mathscr{A},\mathscr{C}}(x,a,c)=\sup_{\pi\in\Pi^{\mathscr{A},\mathscr{C}}_{x,a,c}}J(x;A,C).
	 \end{equation} To simplify the notation, we write $V:=V^{\mathscr{A},\mathscr{C}}$ for the general sets $\mathscr{A}$ and $\mathscr{C}$.
     
	 \begin{myrem}\label{remark on no constraint case}
	 	The optimal dividend payout with proportional reinsurance problem under the diffusion approximation without the ratcheting dividend constraint and the irreversible reinsurance constraint is first studied in \citet{hojgaard1999}. Their model setup is similar to this work without the constraints on the dividends and the reinsurance and without the constant term in the drift coefficient of the reserve process. Their problem is considered to be one-dimensional without these constraints. The additional constant term in the drift is a trivial extension of their work, and the same results should still hold. Hence, if we denote by $V_{NC}(x)$ the optimal value function without constraints, then it is clear that $V_{NC}(x)\geq V(x,a,c)$ for all $x\geq 0$, $a\in\mathscr{A}$, and $c\in\mathscr{C}$. By Theorem 2.3 of \citet{hojgaard1999}, $V_{NC}$ is increasing, concave, twice continuously differentiable with $V_{NC}(0)=0$ and $\lim_{x\to\infty}V_{NC}(x)=\frac{\ol c}{q}$. Thus, for any $0\leq x_1\leq x_2$, $V_{NC}$ is Lipschitz, that is, it satisfies
	 	\begin{equation}\label{no constraint Lipschitz}
	 		0\leq V_{NC}(x_2)-V_{NC}(x_1)\leq L_0(x_2-x_1),
	 	\end{equation}
        where $L_0=V'_{NC}(0)$.
	 \end{myrem}
	 
	The above remark on the relationship between the optimal value functions with constraints and no constraints is useful in proving some properties of the optimal value function $V$ considered in this paper. In the remainder of this section, we state some properties of $V$. {Moreover, for the rest of the paper, all of the proofs are detailed in the Appendix. The following proposition states that $V$ satisfies boundedness and monotonicity.} 
	 
	 \begin{mypr}\label{bound and monotone}
	 	The optimal value function $V(x,a,c)$ is bounded above by $\frac{\ol{c}}{q}$, nondecreasing in $x$ and $a$, and nonincreasing in $c$.
	 \end{mypr}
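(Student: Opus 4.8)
The plan is to treat the four claims separately. The upper bound is immediate: for any admissible $(A,C)$ we have $0\le C(s)\le\ol{c}$, so $J(x;A,C)=\mathbb{E}\big[\int_0^{\tau}e^{-qs}C(s)\,ds\big]\le\ol{c}\int_0^{\infty}e^{-qs}\,ds=\ol{c}/q$, and taking the supremum gives $V(x,a,c)\le\ol{c}/q$. Monotonicity in $x$ is also direct: conditions (i)--(iii) do not involve the initial surplus, so $\Pi^{\mathcal{A},\mathcal{C}}_{x_1,a,c}=\Pi^{\mathcal{A},\mathcal{C}}_{x_2,a,c}$ for $x_1\le x_2$, and running a fixed $(A,C)$ and the same Brownian path from $x_1$ and from $x_2$ produces controlled processes differing by the constant $x_2-x_1\ge0$; hence the process from $x_2$ is ruined no earlier, $J(x_2;A,C)\ge J(x_1;A,C)$, and taking suprema yields $V(x_2,a,c)\ge V(x_1,a,c)$. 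The monotonicity in $c$ and in $a$ will both be obtained by a ``short-delay imitation'', in which an admissible policy from the less favourable initial pair is mimicked, up to an error vanishing with the delay, by an admissible policy from the more favourable one.

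For the $c$-case, fix $c_1\le c_2$ in $\mathcal{C}$ and an admissible $(A,C_2)$ from $(x,a,c_2)$, and for $\delta>0$ let $C_1^{\delta}$ equal $c_1$ on $[0,\delta)$ and $C_2$ on $[\delta,\infty)$; this process is $\mathcal{C}$-valued, right-continuous, nondecreasing (it jumps up from $c_1$ to $C_2(\delta)\ge c_2$ at $\delta$), and starts at $c_1$, so it is admissible from $(x,a,c_1)$. Since $A$ is unchanged, the two controlled processes share the same diffusion term and their difference equals $\int_0^{t\wedge\delta}(C_2(s)-c_1)\,ds\ge0$ for every $t$; thus the ruin time under $(A,C_1^{\delta})$ dominates that under $(A,C_2)$, and from $C_1^{\delta}\ge0$ and $e^{-qs}\le1$ one gets $J(x;A,C_1^{\delta})\ge J(x;A,C_2)-\ol{c}\,\delta$. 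Letting $\delta\downarrow0$ and taking the supremum over $(A,C_2)$ gives $V(x,a,c_1)\ge V(x,a,c_2)$.

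For the $a$-case, fix $x>0$, $a_1\le a_2$ in $\mathcal{A}$, and an admissible $(A_1,C)$ from $(x,a_1,c)$, and for $\delta>0$ let $A_2^{\delta}$ equal $a_2$ on $[0,\delta)$ and $A_1$ on $[\delta,\infty)$; this is $\mathcal{A}$-valued, right-continuous, nonincreasing (since $A_1(\delta)\le A_1(0)=a_1\le a_2$), and starts at $a_2$, so it is admissible from $(x,a_2,c)$. The complication is that on $[0,\delta)$ the two processes also carry different diffusion coefficients, so the comparison cannot be pathwise; what rescues the argument is that the dynamics agree on $[\delta,\infty)$, so after time $\delta$ the gap is frozen at the single random variable $Y_{\delta}:=\int_0^{\delta}\mu(a_2-A_1(s))\,ds+\int_0^{\delta}\sigma(a_2-A_1(s))\,dW_s$, with $\mathbb{E}[Y_{\delta}]\ge0$ and, since $0\le a_2-A_1(s)\le a_2\le1$, also $\mathbb{E}[Y_{\delta}^2]=O(\delta)$. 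One then writes $X^{\delta}=X^{1}+Y_{\delta}$ on $[\delta,\infty)$, uses that $x>0$ makes $\mathbb{P}(\text{either process ruins before }\delta)\to0$, and shows that the ruin times satisfy $\tau^{\delta}\to\tau_1$ (in probability, hence almost surely along a subsequence); this relies on the fact that a continuous Brownian-driven process started strictly above $0$ crosses it transversally, so that its first-passage time to a level $-y$ tends to $\tau_1$ as $y\to0$ from either side. Dominated convergence then applies (the integrand $e^{-qs}C(s)$ is bounded by $\ol{c}$ and has a uniformly small tail), giving $J(x;A_2^{\delta},C)\to J(x;A_1,C)$ along a suitable $\delta\downarrow0$, hence $V(x,a_2,c)\ge J(x;A_1,C)$; the supremum over $(A_1,C)$ finishes the argument for $x>0$, and the value at $x=0$ requires only a separate, elementary check.

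The step I expect to be the main obstacle is precisely this last one: controlling how the ruin time reacts to the vanishing but sign-indefinite perturbation $Y_{\delta}$. Because the imitating process has strictly larger volatility on the short initial window, there is no monotone pathwise coupling, so one really must quantify the movement of $\inf\{t:X^1_t<-Y_{\delta}\}$ as $\delta\downarrow0$ using only $\|Y_{\delta}\|_{L^2}\to0$ together with the transversality of the zero-crossings of $X^1$; everywhere else the estimates are of the one-line pathwise-comparison type.
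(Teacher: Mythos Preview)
Your arguments for the upper bound and for monotonicity in $x$ are correct; the bound $J(x;A,C)\le \ol c\int_0^\infty e^{-qs}\,ds=\ol c/q$ is in fact more direct than the paper's route via the unconstrained value function $V_{NC}$ of Remark~\ref{remark on no constraint case}, and your pathwise coupling for $x$ is essentially the paper's argument.

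For monotonicity in $a$ and in $c$, however, you are working much harder than necessary. The paper simply observes that the admissible sets are nested: if $a_2\le a_1$ then $\Pi^{\mathcal A,\mathcal C}_{x,a_2,c}\subset\Pi^{\mathcal A,\mathcal C}_{x,a_1,c}$, and if $c_1\le c_2$ then $\Pi^{\mathcal A,\mathcal C}_{x,a,c_2}\subset\Pi^{\mathcal A,\mathcal C}_{x,a,c_1}$; both monotonicities then follow in one line by taking suprema over nested sets. The inclusion holds because the phrases ``maximum initial proportion $a$'' and ``minimum initial dividend rate $c$'' mean that a right-continuous, nonincreasing $A$ with $A(0)\le a$ (respectively nondecreasing $C$ with $C(0)\ge c$) is admissible --- equivalently, an instantaneous jump at time $0$ is permitted --- so any policy admissible from the tighter constraint is automatically admissible from the looser one. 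You appear to have read the constraint as $A(0)=a$ exactly, which is what pushes you into the delay-imitation construction.

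Your delay argument does go through cleanly for the $c$-case. For the $a$-case it forces you into the genuinely delicate problem of controlling how the ruin time of a diffusion responds to a small $L^2$-perturbation $Y_\delta$ of indefinite sign --- a step you correctly flag as the main obstacle, and one that would need extra care when the retention level (hence the diffusion coefficient $\sigma A(t)$) can be zero, since the transversality heuristic relies on local nondegeneracy at the hitting time. All of this is bypassed by the set-inclusion observation, which reduces both monotonicity claims to trivialities.
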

	 
	 The optimal value function $V$ cannot exceed the upper limit $\frac{\ol c}{q}$ given any reserve level, and it approaches $\frac{\ol c}{q}$ as the reserve levels become high enough. Moreover, higher reserve and retention levels imply that $V$ reaches the upper limit faster, while higher dividend levels imply that $V$ reaches the upper limit slower.

	 The next proposition states that $V$ satisfies a Lipschitz property with respect to all of its variables. The proof requires the results in \citet{claisse2016} for conditional expectations involving stopping times.
	 
	 \begin{mypr}\label{Lipschitz property}
	 	There exists a constant $L>0$ such that
	 	\begin{equation*}
	 		0\leq V(x_2,a_1,c_1)-V(x_1,a_2,c_2)\leq L\left[(x_2-x_1)+(a_1-a_2)+(c_2-c_1)\right]
	 	\end{equation*}	 	
       for all $0\leq x_1\leq x_2$, $a_1,a_2\in\mathscr{A}$ with $a_2\leq a_1$, and $c_1,c_2\in\mathscr{C}$ with $c_1\leq c_2$. 
	 \end{mypr}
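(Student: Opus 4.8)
The left inequality is a one-line consequence of Proposition~\ref{bound and monotone}: since $V$ is nondecreasing in $x$ and in $a$ and nonincreasing in $c$,
$$V(x_2,a_1,c_1)\ge V(x_1,a_1,c_1)\ge V(x_1,a_2,c_1)\ge V(x_1,a_2,c_2).$$
For the right inequality my plan is to telescope the difference into three one-variable gaps,
\begin{align*}
V(x_2,a_1,c_1)-V(x_1,a_2,c_2)&=\big[V(x_2,a_1,c_1)-V(x_1,a_1,c_1)\big]+\big[V(x_1,a_1,c_1)-V(x_1,a_2,c_1)\big]\\
&\quad+\big[V(x_1,a_2,c_1)-V(x_1,a_2,c_2)\big],
\end{align*}
and to prove a separate Lipschitz bound for each bracket; the constant $L$ is then the largest of the three constants obtained. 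In each case the recipe is the same: fix $\epsilon>0$, take an $\epsilon$-optimal admissible policy for the endpoint with the larger value, and build a competitor at the other endpoint by truncating the relevant control --- the initial reserve is simply lowered, $C$ is replaced by $C(\cdot)\vee c_2$, or $A$ is replaced by $A(\cdot)\wedge a_2$. Because $c_2\in\mc C$ and $a_2\in\mc A$, the truncated controls stay in $\mc C$, resp.\ $\mc A$, so admissibility is preserved whether those sets are intervals or finite; the loss incurred by the competitor is controlled using Remark~\ref{remark on no constraint case}, namely $V(y,a,c)\le V_{NC}(y)\le\frac{\ol c}{q}$ and $V_{NC}(y)\le L_0 y$.

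For the $x$-bracket, running the $\epsilon$-optimal policy for $(x_2,a_1,c_1)$ from $x_1$ produces a reserve process sitting everywhere exactly $x_2-x_1$ below the original one (the drift and diffusion do not depend on the state), so ruin only moves earlier, to some $\tau_{x_1}\le\tau_{x_2}$, and by the strong Markov property the discounted dividends collected on $[\tau_{x_1},\tau_{x_2}]$ are at most $e^{-q\tau_{x_1}}V_{NC}(x_2-x_1)\le L_0(x_2-x_1)$. For the $c$-bracket, with $\tilde C=C(\cdot)\vee c_2$ the new reserve is the old one minus the nondecreasing process $D_t=\int_0^t(\tilde C(s)-C(s))\,ds$, which is bounded by $(c_2-c_1)(t\wedge\rho)$, where $\rho$ is the first time $C$ reaches $c_2$ (after which $D$ stops growing); at the earlier new ruin time $\tilde\tau$ the old reserve equals $D_{\tilde\tau}$, and the excess objective is at most $e^{-q\tilde\tau}V_{NC}(D_{\tilde\tau})\le L_0(c_2-c_1)\,\tilde\tau e^{-q\tilde\tau}\le\frac{L_0}{qe}(c_2-c_1)$, using $\sup_{t\ge0}te^{-qt}=1/(qe)$.

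The $a$-bracket is the crux, and the only one where the coupling does not give a deterministically controlled reserve gap: replacing $A$ by $\tilde A=A(\cdot)\wedge a_2$ perturbs \emph{both} the drift and the diffusion coefficient, so the gap between the two controlled reserves,
\begin{align*}
Y_t:=X^{A,C}_t-X^{\tilde A,C}_t=\int_0^t\mu\,(A(s)-a_2)^+\,ds+\int_0^t\sigma\,(A(s)-a_2)^+\,dW_s,
\end{align*}
carries a genuine martingale part, need not have a fixed sign, and can grow like $\sqrt t$ over an unbounded horizon. I would let $\rho$ be the first time $A$ drops to $a_2$ (so $Y$ is frozen after $\rho$) and set $\nu=\tau\wedge\tilde\tau\wedge\rho$; the two policies pay identical dividends on $[0,\nu)$, so those contributions cancel, and a short case analysis at $\nu$ together with the strong Markov property and $V_{NC}(y)\le L_0 y$ shows the loss is at most $L_0\,\mathbb{E}\big[e^{-q\nu}Y_\nu^+\big]$. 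The finite-variation part of $Y$ contributes at most $\mathbb{E}\big[\int_0^\nu e^{-qs}\mu(A(s)-a_2)^+\,ds\big]\le\frac{\mu}{q}(a_1-a_2)$; for the stochastic integral I would cut $[0,\infty)$ into unit intervals and apply Doob's $L^2$ maximal inequality on each, using that the integrand is bounded by $\sigma(a_1-a_2)$, to get $\mathbb{E}\big[\sup_{t\ge0}e^{-qt}\big|\int_0^t\sigma(A(s)-a_2)^+\,dW_s\big|\big]\le 2\sigma(a_1-a_2)\sum_{n\ge0}e^{-qn}\sqrt{n+1}$, a convergent series. Hence this bracket is $\le L_2(a_1-a_2)$ for an explicit $L_2$, and combining the three estimates (and letting $\epsilon\downarrow0$ in each) finishes the proof. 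The main difficulty is exactly this last step: because the retention level enters the volatility, it is the exponential discounting that has to absorb the $\sqrt t$ growth of the martingale discrepancy over the infinite horizon.
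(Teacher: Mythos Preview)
Your proof follows the paper's almost verbatim: the same telescoping into three one-variable gaps, the same $\epsilon$-optimal policy plus truncated competitor in each case, and the same reduction via Remark~\ref{remark on no constraint case} to a $V_{NC}$-bound. Your treatments of the $x$- and $c$-brackets are identical to the paper's Parts I and II (your $\tilde C=C\vee c_2$ is exactly the paper's $\bar C$, and the constant $L_0/(qe)$ matches).

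The only substantive difference is in the $a$-bracket. The paper takes a shorter route: it writes the reserve gap at the earlier ruin time $\hat\tau$ as $\int_0^{\hat\tau}\mu(A-\hat A)\,ds+\int_0^{\hat\tau}\sigma(A-\hat A)\,dW_s$, bounds this pathwise by $\mu(a_1-a_2)\hat\tau+\sigma(a_1-a_2)W_{\hat\tau}$, and then argues via It\^o that $\mathbb E[e^{-q\hat\tau}W_{\hat\tau}]=0$, so only the drift contributes and one gets $L_3=L_0\mu/(qe)$. Your route is longer but more robust: you do not attempt a pathwise bound on the stochastic integral (which, as you correctly note, has no sign), and instead control $\mathbb E[e^{-q\nu}Y_\nu^+]$ through $\sup_{t\ge0}e^{-qt}|M_t|$ and a Doob $L^2$ maximal inequality on unit blocks. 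This costs you a larger (but still explicit) constant and an extra page of work; what it buys is that you never use the delicate steps $\int_0^{\hat\tau}(A-\hat A)\,dW_s\le(a_1-a_2)W_{\hat\tau}$ or the exchange $\mathbb E\big[\int_0^{\hat\tau}(-q)e^{-qs}W_s\,ds\big]=\int_0^{\hat\tau}(-q)e^{-qs}\mathbb E[W_s]\,ds$, both of which require care when $\hat\tau$ is random and the integrand of the It\^o integral is merely bounded by $a_1-a_2$ rather than equal to it. Your case analysis at $\nu=\tau\wedge\tilde\tau\wedge\rho$ (handling separately which process ruins first and whether $A$ has already reached $a_2$) is the right way to avoid assuming $\tilde\tau\le\tau$, which the coupling does not guarantee once the diffusion coefficient is perturbed.
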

	 
	 One way to interpret this property is that small or large perturbations in the reserve level, retention rate, or dividend rate do not lead to disproportionately large changes in the value of $V$. More precisely, the Lipschitz result implies that the change in $V$ is bounded by a term that is proportional to the change in input values. Moreover, if $V$ has bounded partial derivatives in $x$, $a$, and $c$, then the Lipschitz property is immediately satisfied. We will see in the next sections that $V$ may not be smooth, that is, the partial derivatives may not exist.
	 
	 The following lemma states that the optimal value function $V$ satisfies the dynamic programming principle, which is essential to prove that $V$ is a viscosity solution of the HJB equation, which is the subject of the next section. {The proof is similar to that of \citet[Lemma 1.2]{azcuebook2014}.}
	 
	 \begin{mylm}\label{dpp}
	 	Given any stopping time $\tilde{\tau}$, we have
	 	\begin{equation*}
	 		V(x,a,c)=\sup_{(A,C)\in\Pi^{\mathscr{A},\mathscr{C}}_{x,a,c}}\mathbb{E}\left[\int_0^{\tau_{\pi}\wedge \tilde{\tau}}e^{-qs}C(s)ds+e^{-q(\tau_{\pi}\wedge \tilde{\tau})}V(X^{\pi}_{\tau_{\pi}\wedge \tilde{\tau}},A(\tau_{\pi}\wedge \tilde{\tau}),C(\tau_{\pi}\wedge \tilde{\tau}))\right].
	 	\end{equation*}
	 \end{mylm}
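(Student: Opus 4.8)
The plan is to prove the two inequalities ``$\le$'' and ``$\ge$'' separately. Put $\rho:=\tau\wedge\tilde\tau$; this is a stopping time since $\tilde\tau$ is one and $\tau$ is the hitting time of $(-\infty,0)$ by the continuous adapted process $X^\pi$ in the augmented (hence right-continuous) filtration $\mathbb F$. I would repeatedly use the following facts: $V$ is bounded (Proposition \ref{bound and monotone}) and Lipschitz, hence Borel measurable (Proposition \ref{Lipschitz property}), so every expectation below is well defined, and $V(0,\cdot,\cdot)=0$ because $0\le V\le V_{NC}$ with $V_{NC}(0)=0$; the admissibility requirements (i)--(iii) do not involve the initial surplus, so $\Pi^{\mathcal A,\mathcal C}_{x,a,c}$ depends only on $(a,c)$; and the \emph{restart/flow property}: for $(A,C)\in\Pi^{\mathcal A,\mathcal C}_{x,a,c}$, on $\{\rho<\tau\}$ the shifted control $(A(\rho+\cdot),C(\rho+\cdot))$ is, conditionally on $\mathcal F_\rho$, an admissible continuation from the data $(X^\pi_\rho,A(\rho),C(\rho))$ — monotonicity and range are inherited using $A(\rho)\le a$ and $C(\rho)\ge c$ — while the reserve restarted from $X^\pi_\rho$ under it, driven by $W_{\rho+\cdot}-W_\rho$, equals $X^\pi_{\rho+\cdot}$ by pathwise uniqueness for the linear SDE and the strong Markov property of $W$. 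The conditioning steps would be made rigorous through regular conditional probabilities, as is standard for dynamic programming arguments.

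For ``$\le$'' I would fix $(A,C)\in\Pi^{\mathcal A,\mathcal C}_{x,a,c}$ and decompose
\[
  \mathbb E\!\left[\int_0^\tau e^{-qs}C(s)\,ds\right]
  =\mathbb E\!\left[\int_0^{\rho} e^{-qs}C(s)\,ds\right]
  +\mathbb E\!\left[\mathbf 1_{\{\rho<\tau\}}\,e^{-q\rho}\!\int_0^{\tau-\rho} e^{-qs}C(\rho+s)\,ds\right].
\]
Conditioning the last term on $\mathcal F_\rho$ and using the restart property, its $\mathcal F_\rho$-conditional expectation equals $e^{-q\rho}$ times the value of an admissible continuation from $(X^\pi_\rho,A(\rho),C(\rho))$, hence is at most $e^{-q\rho}V(X^\pi_\rho,A(\rho),C(\rho))$; on $\{\rho=\tau\}$ the term vanishes and $X^\pi_\rho=0$, so $e^{-q\rho}V(X^\pi_\rho,A(\rho),C(\rho))=0$ there as well. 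Taking expectations and then the supremum over $(A,C)$ yields ``$\le$''.

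For ``$\ge$'' I would fix an arbitrary $(A,C)\in\Pi^{\mathcal A,\mathcal C}_{x,a,c}$ and $\varepsilon,\delta>0$ and construct an admissible policy whose value exceeds $\mathbb E[\int_0^\rho e^{-qs}C(s)\,ds+e^{-q\rho}V(X^\pi_\rho,A(\rho),C(\rho))]$ by at most an error vanishing with $\varepsilon,\delta$. Partition the state space $[0,\infty)\times\mathcal A\times\mathcal C$ into countably many Borel sets $D_k$ of diameter at most $\delta$ with representatives $(x_k,a_k,c_k)\in D_k$; using that every admissible control is a measurable functional of the Brownian path, pick $\varepsilon$-optimal $(A^k,C^k)\in\Pi^{\mathcal A,\mathcal C}_{x_k,a_k,c_k}$. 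When $\mathcal A$ and $\mathcal C$ are finite, take $\delta$ below their minimal gaps so that $a_k,c_k$ are the exact second and third coordinates on $D_k$; when they are closed intervals, replace $(A^k,C^k)$ by the monotonicity-preserving truncation back into $\mathcal A\times\mathcal C$ of the shifted control $(A^k(\cdot)+A(\rho)-a_k,\,C^k(\cdot)+C(\rho)-c_k)$, so that it starts at the realized $(A(\rho),C(\rho))$ at a cost controlled by $\delta$. Let $(\bar A,\bar C)$ agree with $(A,C)$ on $[0,\rho]$ and, on $\{\rho<\tau,\,(X^\pi_\rho,A(\rho),C(\rho))\in D_k\}$, follow the copy of this continuation driven by $W_{\rho+\cdot}-W_\rho$; it is admissible by the facts above and the strong Markov property. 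Conditioning $J(x;\bar A,\bar C)$ on $\mathcal F_\rho$, using $J(x_k;A^k,C^k)\ge V(x_k,a_k,c_k)-\varepsilon$ and the Lipschitz bound $|V(x_k,a_k,c_k)-V(X^\pi_\rho,A(\rho),C(\rho))|\le L\delta$ on $D_k$, gives
\[
  V(x,a,c)\ \ge\ J(x;\bar A,\bar C)\ \ge\ \mathbb E\!\left[\int_0^\rho e^{-qs}C(s)\,ds+e^{-q\rho}V(X^\pi_\rho,A(\rho),C(\rho))\right]-\varepsilon-C_0L\delta
\]
for an absolute constant $C_0$; taking the supremum over $(A,C)$ and letting $\varepsilon,\delta\downarrow0$ then yields ``$\ge$''.

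The main obstacle is the concatenation in the ``$\ge$'' direction: ensuring the pasted policy is $\mathbb F$-adapted and admissible — in particular right-continuous and \emph{monotone across} the splicing time $\rho$ — while controlling the loss from approximating the random terminal state $(X^\pi_\rho,A(\rho),C(\rho))$. The monotonicity constraints coming from ratcheting and irreversibility are the feature absent in the classical unconstrained DPP, but they turn out to be benign: restrictions of admissible controls stay admissible, the continuations spliced in always start from a retention level no larger and a dividend level no smaller than the running ones, and the truncation above preserves monotonicity while staying in $\mathcal A\times\mathcal C$. Should the explicit $\delta$-net surgery prove cumbersome, I would instead invoke the Jankov--von Neumann measurable selection theorem to obtain a universally measurable $\varepsilon$-optimal selector $y\mapsto\pi^\varepsilon_y$ and paste $\pi^\varepsilon_{(X^\pi_\rho,A(\rho),C(\rho))}$ directly after $\rho$.
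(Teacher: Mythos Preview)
Your approach matches the paper's: both prove the two inequalities separately, using conditioning and the restart property for ``$\le$'' and a discretize-then-paste construction with near-optimal continuations for ``$\ge$''. If anything, your treatment of ``$\ge$'' is more complete than the paper's---the paper only spells out the deterministic-time case $\tilde\tau=T$, partitions only in $x$ (not in $(a,c)$, even though the DPP involves $V(X^\pi_T,A(T),C(T))$), and resets the surplus via an informal ``pay $X^\pi_T-x_i$ immediately as dividends'' step that does not literally fit the rate-based dividend model, whereas you partition the full three-dimensional state, carry the ratcheting/irreversibility constraints explicitly across the splice, and use the Lipschitz bound from Proposition~\ref{Lipschitz property} to control the discretization error.
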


\section{Main Results}\label{section:hjb}
	 
	 In this section, we present the HJB equations for two different cases, each characterized by the form of the reinsurance set $\mathscr{A}$ and the dividend set $\mathscr{C}$: (1) \textbf{singleton sets}, where the retention level and the dividend rate are fixed, and (2) \textbf{finite sets}, where the retention level and the dividend rate can take on any value from a discrete set of choices. For the finite set case, we will prove that the optimal value function is the unique viscosity solution to its corresponding HJB equation.
	
	 \subsection{Singleton Case}
	 Consider the case $\mathscr{A}=\{a\}$ and $\mathscr{C}=\{c\}$ (i.e., the singleton set case). In this case, the unique admissible strategy consists of having a constant retention level $a$ retained by the insurer and paying a constant dividend rate $c$ up to the ruin time. The value function, denoted by $V^{\{a\},\{c\}}(x,a,c)$, is the unique solution of the second-order linear ordinary differential equation:
	 \begin{equation}\label{HJB base}
	 	\mathcal{L}^{a,c}\left(V^{\{a\},\{c\}}\right):=\frac{1}{2}\sigma^2a^2V^{\{a\},\{c\}}_{xx}+(\mu a-b-c)V^{\{a\},\{c\}}_x-qV^{\{a\},\{c\}}+c=0
	 \end{equation}
	 with boundary conditions $V^{\{a\},\{c\}}(0,a,c)=0$ and $\lim_{x\to\infty}V^{\{a\},\{c\}}(x,a,c)=\frac{c}{q}$. The solutions of (\ref{HJB base}) are of the form $K_1e^{\theta_1(a,c)x}+K_2e^{\theta_2(a,c)x}+\frac{c}{q}$, where $K_1,K_2\in\mathbb{R}$, and $\theta_1(a,c)$ and $\theta_2(a,c)$ are defined as $\theta_1(a,c):=\frac{b+c-\mu a+\sqrt{(b+c-\mu a)^2+2q\sigma^2 a^2}}{\sigma^2a^2}$ and $\theta_2(a,c):=\frac{b+c-\mu a-\sqrt{(b+c-\mu a)^2+2q\sigma^2 a^2}}{\sigma^2a^2}$. The properties of the functions $\theta_1(a,c)$ and $\theta_2(a,c)$ and their partial derivatives are stated in Proposition \ref{claim on theta 1 and 2}.

	 The solutions of \eqref{HJB base} with boundary condition $V^{\{a\},\{c\}}(0)=0$ are of the form
	 \begin{equation}\label{ode solution base}
	 	K\left[e^{\theta_1(a,c)x}-e^{\theta_2(a,c)x}\right]+\frac{c}{q}\left[1-e^{\theta_2(a,c)x}\right],
	 \end{equation}
	 where $K\in\mathbb{R}$. The unique solution of \eqref{HJB base} satisfying the boundary conditions $V^{\{a\},\{c\}}(0,a,c)=0$ and $\lim_{x\to\infty}V^{\{a\},\{c\}}(x,a,c)=\frac{c}{q}$ corresponds to $K=0$. Hence, we have  $V^{\{a\},\{c\}}(x,a,c)=\frac{c}{q}\left[1-e^{\theta_2(a,c)x}\right]$. It is clear that $V^{\{a\},\{c\}}(x,a,c)$ is nondecreasing and concave in $x$.
	 
	 \begin{myrem}\label{remark on asymptote V}
	 	We have $V(x,a,c)\geq V^{\{a\},\{c\}}(x,a,{c})=\frac{{c}}{q}\left[1-e^{\theta_2(a,{c})x}\right]$. Together with Proposition \ref{bound and monotone}, we obtain $\lim_{x\to\infty}V(x,a,c)=\frac{{c}}{q}$.
	 \end{myrem}
	
	 \subsection{Finite Set Case}
	 We now consider the finite set case; that is, $\mathscr{A}:=\{a_1,a_2,\ldots,a_m\}$ with $\underline a= a_m<a_{m-1}<\cdots <a_1\leq 1$ and $\mathscr{C}:=\{c_1,c_2,\ldots,c_n\}$ with $0\leq c_1<c_2<\cdots<c_n=\ol{c}$. By definition, $V^{\mathscr{A},\mathscr{C}}(x,a_i,c_j)=V(x,a_i,c_j)$.
	 
	 For $1\leq i\leq m$ and $1\leq j\leq n$, we simplify the notation as follows:
	 \begin{equation}\label{value discrete}
	 	V^{a_i,c_j}(x):=V^{\mathscr{A},\mathscr{C}}(x,a_i,c_j).
	 \end{equation}
	 Using Proposition \ref{bound and monotone}, we have $V^{a_i,c_j}(x)\geq V^{a_{i+1},c_j}(x)\geq \cdots\geq V^{a_m,c_j}(x)$ for a fixed $1\leq j\leq n-1$, and $V^{a_i,c_j}(x)\geq V^{a_{i},c_{j+1}}(x)\geq \cdots \geq V^{a_i,c_n}(x)$ for a fixed $1\leq i\leq m-1$. Moreover, we set $V^{a_i,c_n}(x)=0$ and $V^{a_m,c_j}=0$ for $i<n$ and $j<m$. The HJB equation associated to (\ref{value discrete}) is given by
	\begin{equation}\label{hjb main}
		\max\{\mathcal{L}^{a_i,c_j}(V^{a_i,c_j})(x),V^{a_{i+1},c_{j}}(x)-V^{a_i,c_j}(x),V^{a_i,c_{j+1}}(x)-V^{a_i,c_j}(x)\}=0,
	\end{equation}
	for $x\geq 0$, $1\leq i\leq m-1$, and $1\leq j\leq n-1$.
	
	We now define viscosity subsolutions and supersolutions for the HJB equation \eqref{hjb main}.
	
	{\begin{mydef}
		A (locally Lipschitz) function $u:[0,\infty)\to\mathbb{R}$ is a viscosity supersolution (subsolution, resp.) of (\ref{hjb main}) at $x\in(0,\infty)$ if any twice continuously differentiable function $\varphi:[0,\infty)\to\mathbb{R}$ %, called a test function for supersolution at $x$, 
            with $\varphi(x)={u}(x)$, such that ${u}-\varphi$ reaches a minimum (maximum, resp.) at $x$, satisfies
			\begin{equation}\label{hjb supersolution}
				\max\{\mathcal{L}^{a_i,c_j}(\varphi)(x),V^{a_{i+1},c_{j}}(x)-\varphi(x),V^{a_i,c_{j+1}}(x)-\varphi(x)\}\leq 0\quad (\geq 0, \mbox{ resp}.).
			\end{equation}
        Moreover, $u$ is a viscosity solution if it is both a viscosity supersolution and a viscosity subsolution. %{\color{red}Are we defining it for a generic $u$ or for $V$?}
	\end{mydef}}
    
	We now state the first main result. The following theorem states that $V^{a_i,c_j}$ is the unique viscosity solution of \eqref{hjb main} and satisfies the boundary conditions. %{It is a direct consequence of Remark \ref{remark on asymptote V} and Propositions \ref{Lipschitz property}, \ref{V is a viscosity solution}, and \ref{comparison viscosity}.}
	
	\begin{myth}\label{verification discrete}
		The optimal value function $V^{a_i,c_j}(x)$ for $i=1,\ldots,m-1$ and $j=1,\ldots, n-1$ is the unique viscosity solution of the associated HJB equation (\ref{hjb main}) with the boundary conditions $V^{a_i,c_j}(0)=0$ and $\lim_{x\to\infty}V^{a_i,c_j}(x)=\frac{\overline c}{q}$.
	\end{myth}

    Since $V^{a_i,c_j}(x)$ is the viscosity solution of \eqref{hjb main} via Theorem \ref{verification discrete}, we can infer that there are values of $x$ such that (i) $\mathcal{L}^{a_i,c_j}(V^{a_i,c_j})=0$, (ii) $V^{a_i,c_j}(x)=V^{a_{i+1},c_j}(x)$, and (iii) $V^{a_i,c_j}(x)=V^{a_{i},c_{j+1}}(x)$. Hence, we can partition $(0,\infty)$ in two ways. The first way is fixing $j$ (i.e., the dividend level is fixed). For $i<m$, $(0,\infty)$ can be partitioned in such a way that the optimal strategy is to retain $a_i$ of the incoming claims when the current reserve is in the open set $\{x:V^{a_i,c_j}(x)>V^{a_{i+1},c_j}(x)\}$ and to decrease the risk exposure to $a_{i+1}$ when the current reserve is in the closed set $\{x:V^{a_i,c_j}(x)=V^{a_{i+1},c_j}(x)\}$. The second way is fixing $i$ (i.e., fixing the retention level). Similarly, for $j<n$, the optimal strategy is to pay dividends at rate $c_j$ of the incoming claims when the current reserve is in the open set $\{x:V^{a_i,c_j}(x)>V^{a_{i},c_{j+1}}(x)\}$ and to increase the dividend rate to $c_{j+1}$ when the current reserve is in the closed set $\{x:V^{a_i,c_j}(x)=V^{a_{i},c_{j+1}}(x)\}$. 

    Define the functions $y:\mathscr{A}\times\mathscr{C}\to[0,\infty]$ and $z:\mathscr{A}\times\mathscr{C}\to[0,\infty]$ such that $y(a_m,c_j)=+\infty$ for $1\leq j\leq n$ and $z(a_i,c_n)=+\infty$ for $1\leq i\leq m$. For $1\leq i\leq m$ and $1\leq j\leq n$, we simplify the notation as follows
	\begin{equation*}
		y_{i,j}:=y(a_i,c_j)\quad\mbox{and}\quad z_{i,j}:=z(a_i,c_j).
	\end{equation*}
    We call the values $y_{i,j}$ and $z_{i,j}$ the retention threshold and dividend threshold, respectively, at retention level $a_i$ and dividend rate $c_j$. The functions $y$ and $z$ are called the retention threshold function and the dividend threshold function, respectively.

	The next theorem states the form of the optimal value function.

    \begin{myth}\label{y and z optimal implies verification}
        Define the function $W^{y,z}(x,a_i,c_j)$ such that it satisfies the following recursive formula:
		\small\begin{equation*}
        \begin{aligned}
            W^{y,z}(x,a_m,c_n)&=\frac{c_n}{q}\left[1-e^{\theta_2(a_m,c_n)x}\right],\\
			W^{y,z}(x,a_i,c_j)&=\begin{cases}
				\frac{c_j}{q}\left[1-e^{\theta_2(a_i,c_j)x}\right]
                +k^{y,z}(a_i,c_j)\left[e^{\theta_1(a_i,c_j)x}-e^{\theta_2(a_i,c_j)x}\right]&\mbox{if $x<y_{i,j}\wedge z_{i,j}$},\\
				W^{y,z}(x,a_{i+1},c_j)\mathbf{1}_{\{y_{i,j}<z_{i,j}\}}
                +W^{y,z}(x,a_i,c_{j+1})\mathbf{1}_{\{y_{i,j}>z_{i,j}\}}\\
				+W^{y,z}(x,a_{i+1},c_{j+1})\mathbf{1}_{\{y_{i,j}=z_{i,j}\}}&\mbox{otherwise},
			\end{cases}
        \end{aligned}
		\end{equation*}\normalsize
		for $i<n$ or $j<m$, where
		\begin{equation*}
			k^{y,z}(a_i,c_j):=\begin{cases}
				\frac{W^{y,z}(y_{i,j},a_{i+1},c_j)-\frac{c_j}{q}\left[1-e^{\theta_2(a_i,c_j)y_{i,j}}\right]}{e^{\theta_1(a_i,c_j)y_{i,j}}-e^{\theta_2(a_i,c_j)y_{i,j}}}&\mbox{if $y_{i,j}<z_{i,j}$},\\
				\frac{W^{y,z}(z_{i,j},a_{i},c_{j+1})-\frac{c_j}{q}\left[1-e^{\theta_2(a_i,c_j)z_{i,j}}\right]}{e^{\theta_1(a_i,c_j)z_{i,j}}-e^{\theta_2(a_i,c_j)z_{i,j}}}&\mbox{if $y_{i,j}>z_{i,j}$},\\
				\frac{W^{y,z}(y_{i,j},a_{i+1},c_{j+1})-\frac{c_j}{q}\left[1-e^{\theta_2(a_i,c_j)y_{i,j}}\right]}{e^{\theta_1(a_i,c_j)y_{i,j}}-e^{\theta_2(a_i,c_j)y_{i,j}}}&\mbox{if $y_{i,j}=z_{i,j}$}.
			\end{cases}
		\end{equation*}
		Suppose $y^*$ and $z^*$ are the optimal threshold functions in the sense that $k^{y^*,z^*}(a_i,c_j)$ is the maximum for any $i<m$ or $j<n$. Then for $i=1,\ldots,m$ and $j=1,\ldots, n$, we have that $W^{y^*,z^*}(x,a_i,c_j)$ is the optimal value function $V^{a_i,c_j}(x)$ in (\ref{value function orig}).
	\end{myth}

    {\begin{myrem}
        An alternative proof of the value function is given in Appendix \ref{appendix:c}. The derivation involves scale functions in the theory of L\'{e}vy fluctuations. For more details, we refer the interested readers to  \cite{kyprianou2014book}.
    \end{myrem}
    
    \begin{myrem}
        If $i=m$ and $n=j$, then the corresponding problem is reduced to the singleton case. This is because we have reached the maximum possible dividend rate and the minimum possible retention level. Hence, the value function is then concave in $x$. If $i<m$ or $n<j$, the concavity of the value function is not guaranteed. Figure \ref{fig:plot-1-v2} in Section \ref{section:examples} shows an example of a value function that is not necessarily concave.
    \end{myrem}}

    \section{Derivation of the Optimal Strategy}\label{section:optimalstrat}

    {In the previous section, we presented the form of the optimal value function. However, the optimal threshold functions $y^*$ and $z^*$ in Theorem \ref{y and z optimal implies verification} have not yet been specified. We can immediately observe from Theorem \ref{y and z optimal implies verification} that the optimal reinsurance and dividend strategies are of a stationary threshold type. This means that the strategy depends on the current levels of reserve, retention, and dividend. In this section, we will discuss how the form of the value function is derived and how the optimal threshold strategies $y^*$ and $z^*$ are constructed. 
	}

%	\begin{myass}
%		For all $1\leq i<m$ and $1\leq j<n$, $y(a_i)\neq z(c_j)$.
%	\end{myass}
%	
Consider the reinsurance-dividend strategies wherein the corresponding value function $\upsilon^{a_i,c_j}(x)$ satisfies the following:
	\begin{enumerate}
		\item[(i)] $\upsilon^{a_m,c_j}(x)=V^{a_m,c_j}(x)$ for $1\leq j\leq n$ and  $\upsilon^{a_i,c_n}(x)=V^{a_i,c_n}(x)$ for $1\leq i\leq m$.
		\item[(ii)] For $i<m$, $\mathcal{L}^{a_i,c_n}(\upsilon^{a_i,c_n}(x))=0$ if $x\in[0,y_{i,n})$ and $\upsilon^{a_i,c_n}(x)=\upsilon^{a_{i+1},c_n}(x)$ if $x\in[y_{i,n},\infty)$.
		\item[(iii)] For $j<n$, $\mathcal{L}^{a_m,c_j}(\upsilon^{a_m,c_j}(x))=0$ if $x\in[0,z_{m,j})$ and $\upsilon^{a_m,c_j}(x)=\upsilon^{a_{m},c_{j+1}}(x)$ if $x\in[z_{m,j},\infty)$.
		\item[(iv)] For $i<m$ and $j<n$, $\mathcal{L}^{a_i,c_j}(\upsilon^{a_i,c_j}(x))=0$ if $x\in[0,y_{i,j}\wedge z_{i,j})$, $\upsilon^{a_i,c_j}(x)=\upsilon^{a_{i+1},c_j}(x)$ if $x\in[y_{i,j},\infty)$, and $\upsilon^{a_i,c_j}(x)=\upsilon^{a_i,c_{j+1}}(x)$ if $x\in[z_{i,j},\infty)$.
	\end{enumerate}

	\begin{comment}
	    
	{We now derive the form of the value function in Theorem \ref{y and z optimal implies verification} in two ways: (1) via a backward recursion argument and (2) via scale functions.} {\color{red}TB: I miss the argument here somehow. I thought we wanted to derive the strategies in this section; not the value function, as the value function is already derived in Theorem 3.3 in previous section. If we aim for an alternative proof for Theorem 3.3, we may want to make Section 4.2 an appendix.}

    {\color{red}[TB: The above reads a little bit odd, as one can read it as $\pi\in\Pi$? In the below, it may be good to avoid $k\in\min\{k\in ...\}$, as it uses two $k$'s]}
    
	\end{comment}
    
    {Denote the threshold strategy as
	\begin{equation}\label{threshold strategy}
		\pi^{y,z}:=\{(A_{x,a_i,c_j},C_{x,a_i,c_j})\}_{(x,a_i,c_j)\in[0,\infty)\times\mathscr{A}\times\mathscr{C}},
	\end{equation} 
	where $(A_{x,a_i,c_j},C_{x,a_i,c_j})\in\Pi^{\mathscr{A},\mathscr{C}}_{x,a_i,c_j}$. The threshold strategy $\pi^{y,z}$ is defined via backward recursion as follows:}
	\begin{enumerate}
		\item[(i)] If $i=m$ and $j=n$, retain $a_m$ of the incoming claims and pay dividends with rate $c_n$ until ruin time, that is, $(A_{x,a_m,c_n},C_{x,a_m,c_n})(t)=(a_m,c_n)$.
		\item[(ii)] If $i=m$, $j<n$, and $x\in[z_{m,j},\infty)$, follow $(A_{x,a_m,c_l},C_{x,a_m,c_l})\in\Pi^{\mathscr{A},\mathscr{C}}_{x,a_m,c_l}$, where 
		\begin{equation*}
			l=\begin{cases}
				n&\mbox{if $x\geq z_{m,r}$, $r=j+1,\ldots,n-1$},\\
				\min\{r\in\{j+1,\ldots,n-1\}:x<z_{m,r}\}&\mbox{otherwise}.
			\end{cases}
		\end{equation*}
		\item[(iii)] If $j=n$, $i<m$, and $x\in[y_{i,n},\infty)$, follow $(A_{x,a_k,c_n},C_{x,a_k,c_n})\in\Pi^{\mathscr{A},\mathscr{C}}_{x,a_k,c_n}$, where 
		\begin{equation*}
			k=\begin{cases}
				m&\mbox{if $x\geq y_{s,n}$, $s=i+1,\ldots,m-1$},\\
				\min\{s\in\{i+1,\ldots,m-1\}:x<y_{s,n}\}&\mbox{otherwise}.
			\end{cases}
		\end{equation*}
		\item[(iv)] If $i<m$, $j<n$, and $x\in [y_{i,j},z_{i,j})$, follow $(A_{x,a_k,c_j},C_{x,a_k,c_j})\in\Pi^{\mathscr{A},\mathscr{C}}_{x,a_k,c_j}$, where 
		\begin{equation*}
			k=\begin{cases}
				m&\mbox{if $z_{s,j}>x\geq y_{s,j}$, $s=i+1,\ldots,m-1$},\\
				\min\{s\in\{i+1,\ldots,m-1\}:z_{s,j}<y_{s,j}\}&\mbox{otherwise}.
			\end{cases}
		\end{equation*}
		\item[(v)] If $i<m$, $j<n$, and $x\in [z_{i,j},y_{i,j})$, follow $(A_{x,a_i,c_l},C_{x,a_i,c_l})\in\Pi^{\mathscr{A},\mathscr{C}}_{x,a_i,c_l}$, where 
		\begin{equation*}
			l=\begin{cases}
				n&\mbox{if $y_{i,r}>x\geq z_{i,r}$, $r=i+1,\ldots,n-1$},\\
				\min\{r\in\{j+1,\ldots,n-1\}:y_{i,r}<z_{i,r}\}&\mbox{otherwise}.
			\end{cases}
		\end{equation*}
		\item[(vi)] If $i<m$, $j<n$, and $x\in [y_{i,j}\vee z_{i,j},\infty)$, follow $(A_{x,a_k,c_l},C_{x,a_k,c_l})\in\Pi^{\mathscr{A},\mathscr{C}}_{x,a_k,c_l}$, where 
		\begin{equation*}
			k=\begin{cases}
				m&\mbox{if $x\geq y_{s,j}$, $s=i+1,\ldots,m-1$},\\
				\min\{s\in\{i+1,\ldots,m-1\}:x<y_{s,j}\}& \mbox{otherwise},
			\end{cases}
		\end{equation*}
		and
		\begin{equation*}
			l=\begin{cases}
				n&\mbox{if $x\geq z_{i,r}$, $r=j+1,\ldots,n-1$},\\
				\min\{r\in\{j+1,\ldots,n-1\}:x<z_{i,r}\}&\mbox{otherwise}.
			\end{cases}
		\end{equation*}
		\item[(vii)] If $i<m$, $j<n$, and $x\in [0,y_{i,j}\wedge z_{i,j})$,  retain $a_i$ of the incoming claims and pay dividends with rate $c_j$ until ruin time $\tau_{\pi}$ or until before the current reserve reaches $y_{i,j}\wedge z_{i,j}$, whichever comes first. {If the current reserves reach $y_{i,j}$ before ruin time, follow $(A_{x,a_{i+1},c_j},C_{x,a_{i+1},c_j})\in \Pi^{\mathscr{A},\mathscr{C}}_{x,a_{i+1},c_j}$. If the reserves reach $z_{i,j}$ before ruin time, follow $(A_{x,a_{},c_{j+1}},C_{x,a_{i},c_{j+1}})\in \Pi^{\mathscr{A},\mathscr{C}}_{x,a_{i},c_{j+1}}$. If $y_{i,j}=z_{i,j}$ is reached before ruin time, follow $(A_{x,a_{i+1},c_{j+1}},C_{x,a_{i+1},c_{j+1}})\in \Pi^{\mathscr{A},\mathscr{C}}_{x,a_{i+1},c_{j+1}}$.
        That is,
        \begin{equation*}
            (A_{x,a_i,c_j},C_{x,a_i,c_j})(t)=
            \begin{cases}
                (a_i,c_j)& \mbox{if $t<(\tau_a\wedge \tau_c)<\tau_{\pi}$ or $t<\tau_{\pi}<(\tau_a\wedge \tau_c)$,}\\
                (a_{i+1},c_j)& \mbox{if $\tau_a<t<(\tau_{\pi}\wedge \tau_c)$,}\\
                (a_{i},c_{j+1})& \mbox{if $\tau_c<t<(\tau_{\pi}\wedge \tau_a)$,}\\
                (a_{i+1},c_{j+1})& \mbox{if $(\tau_a\vee \tau_c)<t<\tau_{\pi}$,}
            \end{cases}
        \end{equation*}
        where $\tau_a$ and $\tau_c$ are the first times the reserve level hits $y_{i,j}$ and $z_{i,j}$, respectively.}
	\end{enumerate}
    The expected payoff of the strategy $\pi^{y,z}$ is defined as $J(x;A_{x,a_i,c_j},C_{x,a_i,c_j})$ and is equal to $W^{y,z}(x,a_i,c_j)$ defined in Theorem \ref{y and z optimal implies verification}. Since the strategy is to retain $a_i$ of the incoming claims and pay dividends with rate $c_j$ whenever $x\in[0,y_{i,j}\wedge z_{i,j})$, then $\mathcal{L}^{a_i,c_j}(W^{y,z})(x,a_i,c_j)=0$ for $x\in[0,y_{i,j}\wedge z_{i,j})$. Moreover, $W^{y,z}(0,a_i,c_j)=0$ since ruin is immediate as soon as the reserve level hits $0$. Lastly, by definition, $W^{y,z}(x,a_i,c_j)=W^{y,z}(x,a_{i+1},c_j)$ if $x\geq y_{i,j}$ and $W^{y,z}(x,a_i,c_j)=W^{y,z}(x,a_{i},c_{j+1})$ if $x\geq z_{i,j}$. The formula for $W^{y,z}$ then follows from \eqref{ode solution base}.

We now seek to maximize the expected payoff $W^{y,z}(x,a_i,c_j)$ over all possible threshold functions $y$ and $z$. Recall from Theorem \ref{y and z optimal implies verification} that $y^*$ and $z^*$ are the optimal threshold functions in the sense that $k^{y^*,z^*}(a_i,c_j)$ attains the maximum for any $i<m$ and $j<n$. Since the function $W^{y,z}(x,a_m,c_n)$ is known, we can solve this optimization problem using backward recursion.
	
	We look for the optimal threshold strategy, which can be viewed as a sequence of $(m-1)\times(n-1)$ one-dimensional optimization problems. These optimization problems consist in maximizing $k^{y,z}(a_i,c_j)$ for $i=m-1,\ldots,1$ and $j=n-1,\ldots,1$. If $W^{y^*,z^*}(x,a_k,c_l)$, $y^*_{k,l}$, and $z^*_{k,l}$ are known for $k=i+1,\ldots,m-1$ and $l=j+1,\ldots,n-1$, then using the recursive formula for $W^{y,z}$, we can solve for $W^{y^*,z^*}(x,a_{i+1},c_j)$ and $W^{y^*,z^*}(x,a_i,c_{j+1})$. Consequently, we can solve for $W^{y^*,z^*}(x,a_i,c_j)$, $y^*_{i,j}$, and $z^*_{i,j}$.
	
	Define the continuous functions $G^{\mathcal{A}}_{i,j}:[0,\infty)\to\mathbb{R}$, $G^{\mathcal{C}}_{i,j}:[0,\infty)\to\mathbb{R}$, and $G^{\mathcal{E}}_{i,j}:[0,\infty)\to\mathbb{R}$ as
	\begin{equation}\label{G^A function}
		G^{\mathcal{A}}_{i,j}(x):=\begin{cases}
			\frac{W^{y^*,z^*}(x,a_{i+1},c_j)-\frac{c_j}{q}\left[1-e^{\theta_2(a_i,c_j)x}\right]}{e^{\theta_1(a_i,c_j)x}-e^{\theta_2(a_i,c_j)x}}&\mbox{if $x>0$},\\
			\frac{\partial_xW^{y^*,z^*}(0,a_{i+1},c_j)+\frac{c_j}{q}\theta_2(a_i,c_j)}{\theta_1(a_i,c_j)-\theta_2(a_i,c_j)}&\mbox{if $x=0$},
		\end{cases}
	\end{equation}
	\begin{equation*}
		G^{\mathcal{C}}_{i,j}(x):=\begin{cases}
			\frac{W^{y^*,z^*}(x,a_{i},c_{j+1})-\frac{c_j}{q}\left[1-e^{\theta_2(a_i,c_j)x}\right]}{e^{\theta_1(a_i,c_j)x}-e^{\theta_2(a_i,c_j)x}}&\mbox{if $x>0$},\\
			\frac{\partial_xW^{y^*,z^*}(0,a_{i},c_{j+1})+\frac{c_j}{q}\theta_2(a_i,c_j)}{\theta_1(a_i,c_j)-\theta_2(a_i,c_j)}&\mbox{if $x=0$},
		\end{cases}
	\end{equation*}
	and
	\begin{equation*}
		G^{\mathcal{E}}_{i,j}(x):=\begin{cases}
			\frac{W^{y^*,z^*}(x,a_{i+1},c_{j+1})-\frac{c_j}{q}\left[1-e^{\theta_2(a_i,c_j)x}\right]}{e^{\theta_1(a_i,c_j)x}-e^{\theta_2(a_i,c_j)x}}&\mbox{if $x>0$},\\
			\frac{\partial_xW^{y^*,z^*}(0,a_{i+1},c_{j+1})+\frac{c_j}{q}\theta_2(a_i,c_j)}{\theta_1(a_i,c_j)-\theta_2(a_i,c_j)}&\mbox{if $x=0$}.
		\end{cases}
	\end{equation*}
    For $k=\mc A,\mc C,\mc E$, we introduce the following notation:
	\begin{equation*}
		g^{k}_{i,j}:=
		\min\left[\argmax_{x\in[0,\infty)}G^{k}_{i,j}(x)\right].
	\end{equation*}
    By Lemma \ref{G has maxima and min exists}, the maximizer of $G^{k}_{i,j}$ exists in $[0,\infty)$ and $g^{k}_{i,j}$ also exists for $k=\mc A,\mc C,\mc E$. The optimal threshold functions $y^*_{i,j}$ and $z^*_{i,j}$ are then given by:\\
		\textbf{Case 1:} If $\max_{x\in[0,\infty)}G^{\mathcal{A}}_{i,j}(x)>\max_{x\in[0,\infty)}G^{\mathcal{C}}_{i,j}(x)\vee\max_{x\in[0,\infty)}G^{\mathcal{E}}_{i,j}(x)$, then
		\begin{equation*}
			y^*_{i,j}=g^{\mathcal{A}}_{i,j}\quad\mbox{and}\quad z^*_{i,j}=+\infty.
		\end{equation*}
		\textbf{Case 2:} If $\max_{x\in[0,\infty)}G^{\mathcal{C}}_{i,j}(x)>\max_{x\in[0,\infty)}G^{\mathcal{A}}_{i,j}(x)\vee\max_{x\in[0,\infty)}G^{\mathcal{E}}_{i,j}(x)$, then
		\begin{equation*}
			y^*_{i,j}=+\infty\quad\mbox{and}\quad z^*_{i,j}=g^{\mathcal{C}}_{i,j}.
		\end{equation*}
		\textbf{Case 3:} If $\max_{x\in[0,\infty)}G^{\mathcal{E}}_{i,j}(x)>\max_{x\in[0,\infty)}G^{\mathcal{A}}_{i,j}(x)\vee\max_{x\in[0,\infty)}G^{\mathcal{C}}_{i,j}(x)$, then
		\begin{equation*}
			y^*_{i,j}=z^*_{i,j}=g^{\mathcal{E}}_{i,j}.
		\end{equation*}
		\textbf{Case 4:} If $\max_{x\in[0,\infty)}G^{\mathcal{A}}_{i,j}(x)=\max_{x\in[0,\infty)}G^{\mathcal{C}}_{i,j}(x)>\max_{x\in[0,\infty)}G^{\mathcal{E}}_{i,j}(x)$, then either
		\begin{equation*}
			y^*_{i,j}=\begin{cases}
				g^{\mathcal{A}}_{i,j}&\mbox{if $g^{\mathcal{A}}_{i,j}\leq g^{\mathcal{C}}_{i,j}$},\\
				+\infty&\mbox{otherwise},
			\end{cases}
			\quad\mbox{and}\quad
			z^*_{i,j}=\begin{cases}
				g^{\mathcal{C}}_{i,j}&\mbox{if $g^{\mathcal{C}}_{i,j}< g^{\mathcal{A}}_{i,j}$},\\
				+\infty&\mbox{otherwise},
			\end{cases}
		\end{equation*}
		or
		\begin{equation*}
			y^*_{i,j}=\begin{cases}
				g^{\mathcal{A}}_{i,j}&\mbox{if $g^{\mathcal{A}}_{i,j}< g^{\mathcal{C}}_{i,j}$},\\
				+\infty&\mbox{otherwise},
			\end{cases}
			\quad\mbox{and}\quad
			z^*_{i,j}=\begin{cases}
				g^{\mathcal{C}}_{i,j}&\mbox{if $g^{\mathcal{C}}_{i,j}\leq g^{\mathcal{A}}_{i,j}$},\\
				+\infty&\mbox{otherwise},
			\end{cases}
		\end{equation*}
		depending on whichever between decrease in retention rate and increase in dividend rate is prioritized.\\
		\textbf{Case 5:} If $\max_{x\in[0,\infty)}G^{\mathcal{E}}_{i,j}(x)=\max_{x\in[0,\infty)}G^{\mathcal{A}}_{i,j}(x)>\max_{x\in[0,\infty)}G^{\mathcal{C}}_{i,j}(x)$, then
		\begin{equation*}
			y^*_{i,j}=\begin{cases}
				g^{\mathcal{E}}_{i,j}&\mbox{if $g^{\mathcal{E}}_{i,j}\leq g^{\mathcal{A}}_{i,j}$},\\
				g^{\mathcal{A}}_{i,j}&\mbox{otherwise},
			\end{cases}
			\quad\mbox{and}\quad
			z^*_{i,j}=\begin{cases}
				g^{\mathcal{E}}_{i,j}&\mbox{if $g^{\mathcal{E}}_{i,j}\leq g^{\mathcal{A}}_{i,j}$},\\
				+\infty&\mbox{otherwise}.
			\end{cases}
		\end{equation*}
		\textbf{Case 6:} If $\max_{x\in[0,\infty)}G^{\mathcal{E}}_{i,j}(x)=\max_{x\in[0,\infty)}G^{\mathcal{C}}_{i,j}(x)>\max_{x\in[0,\infty)}G^{\mathcal{A}}_{i,j}(x)$, then
		\begin{equation*}
			y^*_{i,j}=\begin{cases}
				g^{\mathcal{E}}_{i,j}&\mbox{if $g^{\mathcal{E}}_{i,j}\leq g^{\mathcal{C}}_{i,j}$},\\
				+\infty&\mbox{otherwise},
			\end{cases}
			\quad\mbox{and}\quad
			z^*_{i,j}=\begin{cases}
				g^{\mathcal{E}}_{i,j}&\mbox{if $g^{\mathcal{E}}_{i,j}\leq g^{\mathcal{C}}_{i,j}$},\\
				g^{\mathcal{C}}_{i,j}&\mbox{otherwise}.
			\end{cases}
		\end{equation*}
		\textbf{Case 7:} If $\max_{x\in[0,\infty)}G^{\mathcal{E}}_{i,j}(x)=\max_{x\in[0,\infty)}G^{\mathcal{A}}_{i,j}(x)=\max_{x\in[0,\infty)}G^{\mathcal{C}}_{i,j}(x)$, then
		\begin{equation*}
			y^*_{i,j}=\begin{cases}
				g^{\mathcal{E}}_{i,j}&\mbox{if $g^{\mathcal{E}}_{i,j}\leq g^{\mathcal{A}}_{i,j}\wedge g^{\mathcal{C}}_{i,j}$},\\
				g^{\mathcal{A}}_{i,j}&\mbox{if $g^{\mathcal{A}}_{i,j}< g^{\mathcal{E}}_{i,j}\wedge g^{\mathcal{C}}_{i,j}$}\\
                &\mbox{or $g^{\mathcal{A}}_{i,j}= g^{\mathcal{C}}_{i,j}< g^{\mathcal{E}}_{i,j}$},\\
				+\infty&\mbox{otherwise,}
			\end{cases}\quad\mbox{and}\quad
			z^*_{i,j}=\begin{cases}
				g^{\mathcal{E}}_{i,j}&\mbox{if $g^{\mathcal{E}}_{i,j}\leq g^{\mathcal{A}}_{i,j}\wedge g^{\mathcal{C}}_{i,j}$},\\
				g^{\mathcal{C}}_{i,j}&\mbox{if $g^{\mathcal{C}}_{i,j}< g^{\mathcal{E}}_{i,j}\wedge g^{\mathcal{A}}_{i,j}$},\\
				+\infty&\mbox{otherwise,}
			\end{cases}
		\end{equation*}
		or
		\begin{equation*}
			y^*_{i,j}=\begin{cases}
				g^{\mathcal{E}}_{i,j}&\mbox{if $g^{\mathcal{E}}_{i,j}\leq g^{\mathcal{A}}_{i,j}\wedge g^{\mathcal{C}}_{i,j}$},\\
				g^{\mathcal{A}}_{i,j}&\mbox{if $g^{\mathcal{A}}_{i,j}< g^{\mathcal{E}}_{i,j}\wedge g^{\mathcal{C}}_{i,j}$},\\
				+\infty&\mbox{otherwise,}
			\end{cases}\quad\mbox{and}\quad
			z^*_{i,j}=\begin{cases}
				g^{\mathcal{E}}_{i,j}&\mbox{if $g^{\mathcal{E}}_{i,j}\leq g^{\mathcal{A}}_{i,j}\wedge g^{\mathcal{C}}_{i,j}$},\\
				g^{\mathcal{C}}_{i,j}&\mbox{if $g^{\mathcal{C}}_{i,j}< g^{\mathcal{E}}_{i,j}\wedge g^{\mathcal{A}}_{i,j}$}\\
                &\mbox{or $g^{\mathcal{C}}_{i,j}= g^{\mathcal{A}}_{i,j}< g^{\mathcal{E}}_{i,j}$},\\
				+\infty&\mbox{otherwise,}
			\end{cases}
		\end{equation*}
		depending on whichever between decrease in retention rate and increase in dividend rate is prioritized.

	To establish continuity and differentiability of $\wopt$, we consider three auxiliary problems of looking for the smallest solution $U^*_{\mathcal{A}}$, $U^*_{\mathcal{C}}$, and $U^*_{\mathcal{E}}$ of the equation $\mathcal{L}^{a_i,c_j}(U)=0$ in $[0,\infty)$ with boundary condition $U(0)=0$ above $\wopt(\cdot,a_{i+1},c_j)$, $\wopt(\cdot,a_{i},c_{j+1})$, and $\wopt(\cdot,a_{i+1},c_{j+1})$, respectively. Once we find ${U}^*_{\mathcal{A}}$, ${U}^*_{\mathcal{C}}$, and ${U}^*_{\mathcal{E}}$, we get, respectively,
	\begin{equation*}
		y^*_{i,j}=\begin{cases}
			0&\mbox{if $U^*_{\mathcal{A}}(\cdot)>\wopt(\cdot,a_{i+1},c_j)$ in $(0,\infty)$},\\
			\inf\{y>0:U^*_{\mathcal{A}}(\cdot)=\wopt(\cdot,a_{i+1},c_j)\}&\mbox{otherwise},
		\end{cases}
	\end{equation*}
	\begin{equation*}
		z^*_{i,j}=\begin{cases}
			0&\mbox{if $U^*_{\mathcal{C}}(\cdot)>\wopt(\cdot,a_{i},c_{j+1})$ in $(0,\infty)$},\\
			\inf\{y>0:U^*_{\mathcal{C}}(\cdot)=\wopt(\cdot,a_{i},c_{j+1})\}&\mbox{otherwise},
		\end{cases}
	\end{equation*}\normalsize
	and
	\begin{equation*}
		y^*_{i,j}=z^*_{i,j}=\begin{cases}
			0\qquad\qquad\qquad\qquad\qquad\qquad\qquad\,\qquad\mbox{if $U^*_{\mathcal{E}}(\cdot)>\wopt(\cdot,a_{i+1},c_{j+1})$ in $(0,\infty)$},\\
			\inf\{y>0:U^*_{\mathcal{E}}(\cdot)=\wopt(\cdot,a_{i+1},c_{j+1})\}\quad\mbox{otherwise}.
		\end{cases}
	\end{equation*}\normalsize
	We then have 
	\begin{equation*}
		\begin{cases}
			\wopt(x,a_i,c_j)=U^*_{\mathcal{A}}(x)&\mbox{for $x<y^*_{i,j}$},\\
			\wopt(x,a_i,c_j)=\wopt(x,a_{i+1},c_j)&\mbox{for $x\geq y^*_{i,j}$},
		\end{cases}
	\end{equation*}
	\begin{equation*}
		\begin{cases}
			\wopt(x,a_i,c_j)=U^*_{\mathcal{C}}(x)&\mbox{for $x<z^*_{i,j}$},\\
			\wopt(x,a_i,c_j)=\wopt(x,a_{i},c_{j+1})&\mbox{for $x\geq z^*_{i,j}$},
		\end{cases}
	\end{equation*}
	and
	\begin{equation*}
		\begin{cases}
			\wopt(x,a_i,c_j)=U^*_{\mathcal{E}}(x)&\mbox{for $x<y^*_{i,j}=z^*_{i,j}$},\\
			\wopt(x,a_i,c_j)=\wopt(x,a_{i+1},c_{j+1})&\mbox{for $x\geq y^*_{i,j}=z^*_{i,j}$}.
		\end{cases}
	\end{equation*}
	By Lemma \ref{lemma on existence of U}, ${U}^*_{\mathcal{A}}$, ${U}^*_{\mathcal{C}}$, and ${U}^*_{\mathcal{E}}$ exist.

	%We can now state the main result. % for the finite set case. 
 %It shows that the constructed candidate strategy yields the optimal value function.

        \begin{comment}
	\begin{myth}\label{y and z optimal implies verification}
		Suppose $y^*$ and $z^*$ are the optimal threshold functions. Then for $i=1,\ldots,m$ and $j=1,\ldots, n$, we have that $W^{y^*,z^*}(x,a_i,c_j)$ is the optimal function $V^{a_i,c_j}(x)$ in (\ref{value function orig}).
	\end{myth}
\end{comment}

    \begin{myrem}
         The construction of the optimal threshold functions ensures that $y^{*}_{i,j}\wedge z^{*}_{i,j}<\infty$; that is, the optimal threshold strategy always exists. Moreover, the strategy depends on which of the functions $G^{\mathcal{A}}_{i,j}$, $G^{\mathcal{C}}_{i,j}$, and $G^{\mathcal{E}}_{i,j}$ maximizes the expected payoff $W^{y,z}$ at the current levels of reserve, retention, and dividend. For instance, if the maximum value of $G^{\mathcal{A}}_{i,j}$ is the higher than the other two functions, the next threshold to consider would be the reinsurance level. In the case of a tie among these functions, the insurer must decide whether to increase the retention level or the dividend level, based on their strategic priorities. 
    \end{myrem}

    \begin{myrem}\label{remark on extended threshold strategy}
	Given $y_{i,j},z_{i,j}:\mathscr{A}\times\mathscr{C}\to[0,\infty]$, we have defined in \eqref{threshold strategy} a threshold strategy $\pi^{y,z}:=\{(A_{x,a_i,c_j},C_{x,a_i,c_j})\}_{(x,a_i,c_j)\in[0,\infty)\times\mathscr{A}\times\mathscr{C}}$, where $(A_{x,a_i,c_j},C_{x,a_i,c_j})\in\Pi^{\mathscr{A},\mathscr{C}}_{x,a_i,c_j}$ for $1\leq i\leq m$ and $1\leq j\leq n$. This strategy can be extended to
	\begin{align*}
		\tilde{\pi}^{y,z}:=\{(A_{x,a,c},C_{x,a,c})\}_{(x,a,c)\in[0,\infty)\times[a_m,a_1]\times[c_1,c_n]}
	\end{align*}
	as follows:
	\begin{itemize}
		\item[(i)] If $a\in(a_{i+1},a_{i})$, $c\in(c_j,c_{j+1})$, and $x<y_{i,j}\wedge z_{i,j}$, retain proportion $a$ of the incoming claims and pay dividends at rate $c$ until the time of ruin. If the current reserve reaches $y_{i,j}$ before ruin time, follow $\left(A_{y_{i,j},a_{i+1},c},C_{y_{i,j},a_{i+1},c}\right)\in\Pi^{\mc{A},\mc{C}}_{x,a_{i+1},c}$. If the current reserve reaches $z_{i,j}$ first before ruin time, follow $\left(A_{z_{i,j},a,c_{j+1}},C_{z_{i,j},a,c_{j+1}}\right)\in\Pi^{\mc{A},\mc{C}}_{x,a,c_{j+1}}$. If the current reserve reaches $y_{i,j}$ and $z_{i,j}$ simultaneously before ruin time, follow $\left(A_{y_{i,j},a_{i+1},c_{j+1}},C_{y_{i,j},a_{i+1},c_{j+1}}\right)\in\Pi^{\mc{A},\mc{C}}_{x,a_{i+1},c_{j+1}}$.
		
		\item[(ii)] If $a\in(a_{i+1},a_{i})$, $c\in(c_j,c_{j+1})$, and $x\geq y_{i,j}$, follow $\left(A_{x,a_{i+1},c_j},C_{x,a_{i+1},c_j}\right)\in\Pi^{\mc{A},\mc{C}}_{x,a_{i+1},c_j}$. More precisely, if $(x,a,c)\in[0,y_{i,j})\times(a_{i+1},a_i)\times(c_j,c_{j+1})$ with $y_{i,j}<z_{i,j}$, then $(A_{x,a},C_{x,c})\in\Pi^{\mc{A},\mc{C}}_{x,a,c}$ is defined as $(A_{x,a,c},C_{x,a,c})(t)=(a,c)$ and so $X^{(A_{x,a,c},C_{x,a,c})}_t=X_t-ct$ for 
		\begin{align*}
			t<\tau_{\pi}\wedge\tau^{\mc{A}}_i\quad\mbox{where $\tau^{\mc{A}}_i:=\min\{s:X^{(A_{x,a,c},C_{x,a,c})}_s=y_{i,j}\}$},
		\end{align*}
		and $(A_{x,a,c},C_{x,a,c})(t)=(A_{y_{i,j},a_{i+1},c_j},C_{y_{i,j},a_{i+1},c_j})({t-\tau^{\mc{A}}_i})\in\Pi^{\mc{A},\mc{C}}_{y_{i,j},a_{i+1},c_j}$ for $t\geq\tau^{\mc{A}}_i$. Finally, it holds that $(A_{x,a,c},C_{x,a,c})=(A_{x,a_{i+1},c_j},C_{x,a_{i+1},c_j})\in\Pi^{\mc{A},\mc{C}}_{x,a_{i+1},c_j}$ for $(x,a,c)\in[y_{i,j},\infty)\times(a_{i+1},a_i)\times(c_j,c_{j+1})$.
		
		\item[(iii)] If $a\in(a_{i+1},a_{i})$, $c\in(c_{j},c_{j+1})$ and $x\geq z_{i,j}$, follow $\left(A_{x,a_i,c_{j+1}},C_{x,a_i,c_{j+1}}\right)\in\Pi^{\mc{A},\mc{C}}_{x,a_i,c_{j+1}}$. More precisely, if $(x,a,c)\in[0,z_{i,j})\times(a_{i+1},a_i)\times(c_j,c_{j+1})$ with $z_{i,j}<y_{i,j}$, then $(A_{x,a,c},C_{x,a,c})\in\Pi^{\mc{A},\mc{C}}_{x,a,c}$ is defined as $(A_{x,a,c},C_{x,a,c})(t)=(a,c)$ and so $X^{(A_{x,a,c},C_{x,a,c})}_t=X_t-ct$ for 
		\begin{align*}
			t<\tau_{\pi}\wedge\tau^{\mc{C}}_j\quad\mbox{where $\tau^{\mc{C}}_j:=\min\{s:X^{(A_{x,a,c},C_{x,a,c})}_s=z_{i,j}\}$},
		\end{align*}
		and $(A_{x,a,c},C_{x,a,c})(t)=(A_{z_{i,j},a_{i},c_{j+1}},C_{z_{i,j},a_i,c_{j+1}})(t-\tau^{\mc{C}}_j)\in\Pi^{\mc{A},\mc{C}}_{y_{i,j},a_{i},c_{j+1}}$ for $t\geq\tau^{\mc{C}}_j$. Finally, it holds that $(A_{x,a,c},C_{x,a,c})=(A_{x,a_{i},c_{j+1}},C_{x,a_i,c_{j+1}})\in\Pi^{\mc{A},\mc{C}}_{x,a_{i},c_{j+1}}$ for $(x,a,c)\in[z_{i,j},\infty)\times(a_{i+1},a_i)\times(c_j,c_{j+1})$.
		
		\item[(iv)] If $a\in(a_{i+1},a_{i})$, $c\in(c_{j},c_{j+1})$ and $x\geq y_{i,j}=z_{i,j}$, follow $\left(A_{x,a_{i+1},c_{j+1}},C_{x,a_{i+1},c_{j+1}}\right)\in\Pi^{\mc{A},\mc{C}}_{x,a_{i+1},c_{j+1}}$. More precisely, if $(x,a,c)\in[0,z_{i,j})\times(a_{i+1},a_i)\times(c_j,c_{j+1})$ with $z_{i,j}=y_{i,j}$, then $(A_{x,a,c},C_{x,a,c})\in\Pi^{\mc{A},\mc{C}}_{x,a,c}$ is defined as $(A_{x,a,c},C_{x,a,c})(t)=(a,c)$ and so $X^{(A_{x,a,c},C_{x,a,c})}(t)=X_t-ct$ for 
		\begin{align*}
			t<\tau_{\pi}\wedge\tau^{\mc{E}}_{i,j}\quad\mbox{where $\tau^{\mc{E}}_{i,j}:=\min\{s:X^{(A_{x,a,c},C_{x,a,c})}_s=z_{i,j}\}$},
		\end{align*}
		and $(A_{x,a,c},C_{x,a,c})(t)=(A_{z_{i,j},a_{i+1},c_{j+1}},C_{z_{i,j},a_{i+1},c_{j+1}})(t-\tau^{\mc{E}}_{i,j})\in\Pi^{\mc{A},\mc{C}}_{y_{i,j},a_{i+1},c_{j+1}}$ for $t\geq\tau^{\mc{E}}_{i,j}$. Finally, $(A_{x,a,c},C_{x,a,c})=(A_{x,a_{i+1},c_{j+1}},C_{x,a_{i+1},c_{j+1}})\in\Pi^{\mc{A},\mc{C}}_{x,a_{i+1},c_{j+1}}$ for $(x,a,c)\in[z_{i,j},\infty)\times(a_{i+1},a_i)\times(c_j,c_{j+1})$.
	\end{itemize}
	The value function of the extended stationary strategy $\tilde\pi^{y,z}$ is defined as
	\begin{align*}
		J^{\tilde{\pi}^{y,z}}(x,a,c):=J(x;A_{x,a,c},C_{x,a,c}):[0,\infty)\times[a_m,a_1]\times[c_1,c_n]\to\mathbb{R}.
	\end{align*}
\end{myrem}

\section{Numerical Illustrations}\label{section:examples}

In this section, we present several numerical examples to gain further insight into the optimal dividend and reinsurance strategies, as well as the optimal value function. These examples are intended to provide a clearer understanding of the optimal retention and dividend threshold levels and how they affect the optimal value function.

Figure \ref{figure:global} presents three examples, each illustrating the optimal value function. The vertical dotted lines represent the optimal threshold levels. We fix the following parameters across all three examples: $\sigma=1.5$, $b=2$, and $q=0.1$.

%Discrete Case #5.R 
Consider the case $\mu=6$, $\mathscr{A}=\{0.80,0.90\}$, and $\mathscr{C}=\{2,4\}$. We obtain the following optimal threshold levels: $z^*_{1,1}=13.04$ and $y^*_{1,2}=348.5$. The sequence of retention-dividend rate changes is as follows: $(0.9,2)\stackrel{x=13.04}{\longrightarrow}(0.9,4)\stackrel{x=348.5}{\longrightarrow}(0.8,4)$, and this is illustrated in Figures \ref{fig:plot-1-v1} and \ref{fig:plot-1-v2}. The optimal strategy is to increase the dividend level first and then decrease the retained proportion of incoming claims if ruin time does not occur before or within these level changes. In Figure \ref{fig:plot-1-v2}, we can see that $W^{y^*,z^*}(x,a_1,c_1)$ (in red) is greater than $W^{y^*,z^*}(x,a_1,c_2)$ (in blue) when $x<z^*_{1,1}$. We can also see here how concavity is not guaranteed when both the minimum retention level and the maximum dividend rate have not been attained.  

%Discrete Case #6.5.R
Next, we consider a similar setup with a higher adjusted mean $\mu=10$. We obtain the following optimal threshold levels: $y^*_{1,1}=0$ and $z^*_{2,1}=1.92$. The sequence of retention-dividend level changes is as follows:  $(0.90,2)\stackrel{x=0}{\longrightarrow}(0.80,2)\stackrel{x=1.92}{\longrightarrow}(0.80,4)$, and this is illustrated in Figure \ref{fig:plot-2-v1}. The curves $W^{y^*,z^*}(x,a_1,c_1)$ and $W^{y^*,z^*}(x,a_2,c_1)$ coincide since it is optimal to increase the reinsurance level right away assuming that the initial reserve level is positive. The dividend level is then increased as soon as the reserve reaches the threshold level $z^*_{2,1}=1.92$. It is also interesting to note that in the second setup, we have $\mu a_1-2(b+c_1)<0$ and $\sigma<\sqrt{\frac{-\mu(\mu a_1-2(b+c_1))}{2qa_1}}$, which implies that $\frac{\partial}{\partial a}\theta_2(a,c)<0$. In the first setup, we have $\mu a_1-2(b+c_1)>0$, which implies that $\frac{\partial}{\partial a}\theta_2(a,c)>0$.%\vspace{-1.25cm}

\begin{figure}[hbt!]
	\begin{subfigure}{.5\textwidth}
		\centering
		\includegraphics[width=1\textwidth]{"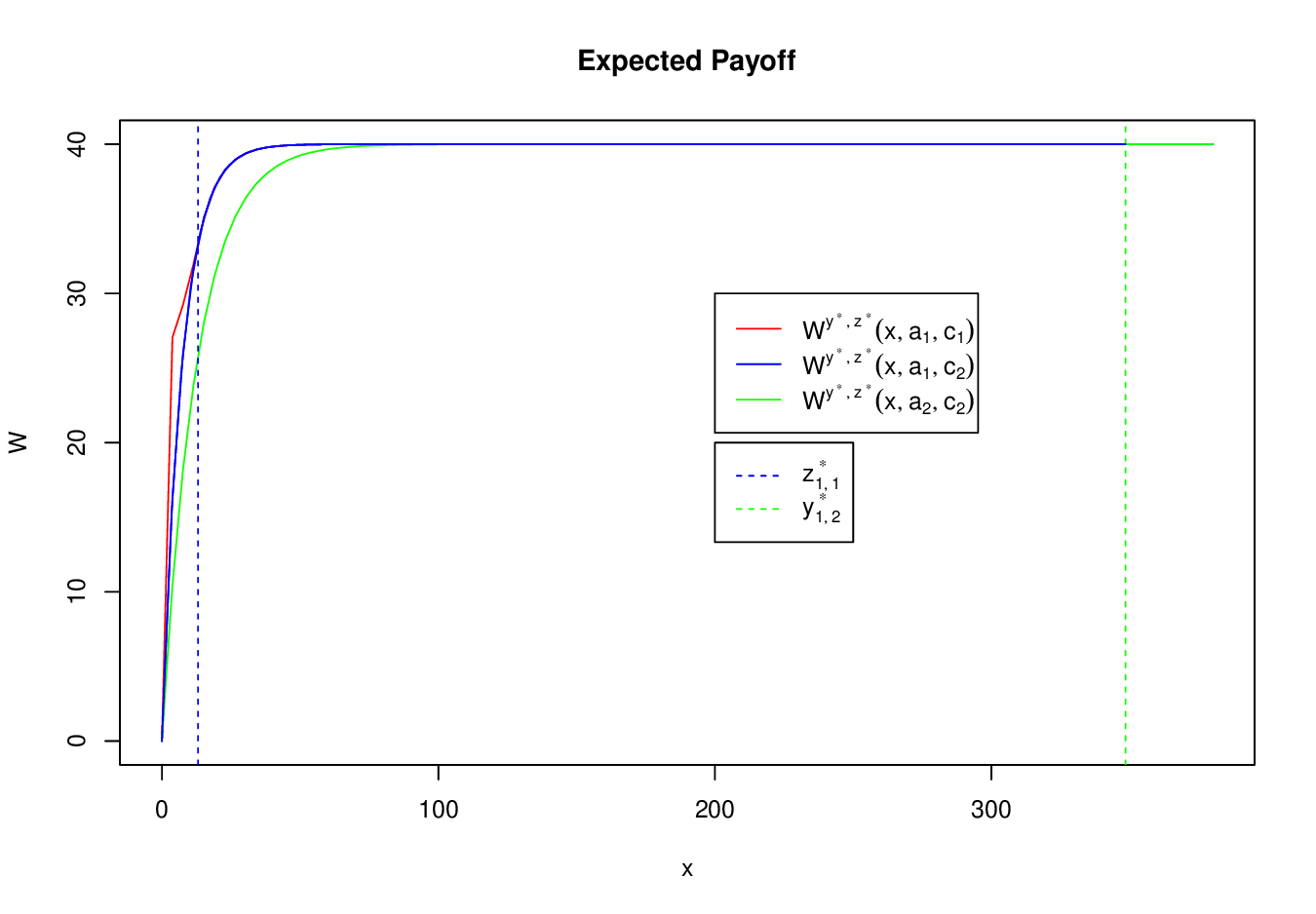"}
		\caption{Example 1: a zoomed-out perspective}
		\label{fig:plot-1-v1}
	\end{subfigure}%
	\begin{subfigure}{.5\textwidth}
		\centering
		\includegraphics[width=1\textwidth]{"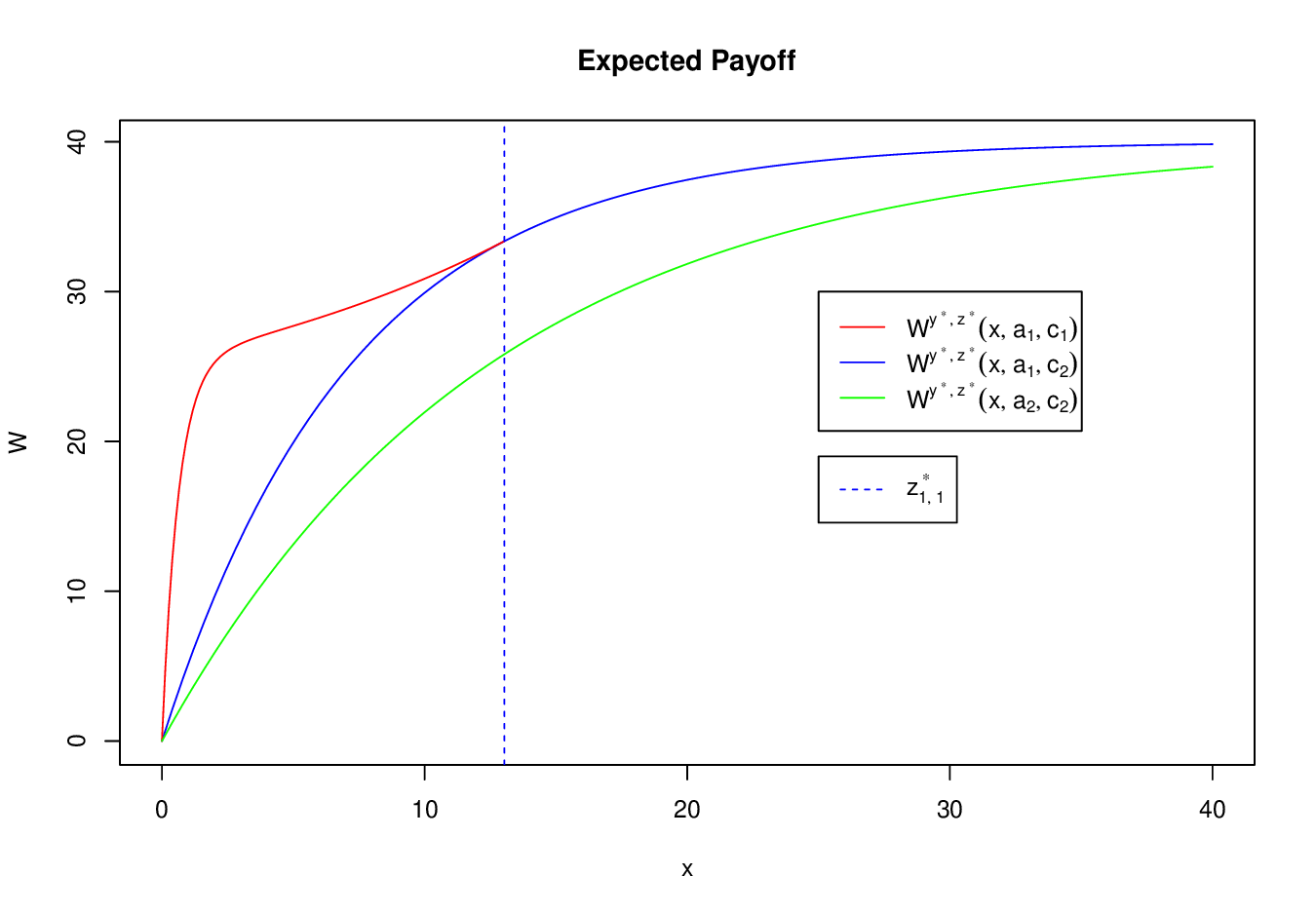"}
		\caption{Example 1: a zoomed-in perspective}
		\label{fig:plot-1-v2}
	\end{subfigure}
	\begin{subfigure}{.5\textwidth}
		\centering
		\includegraphics[width=1\textwidth]{"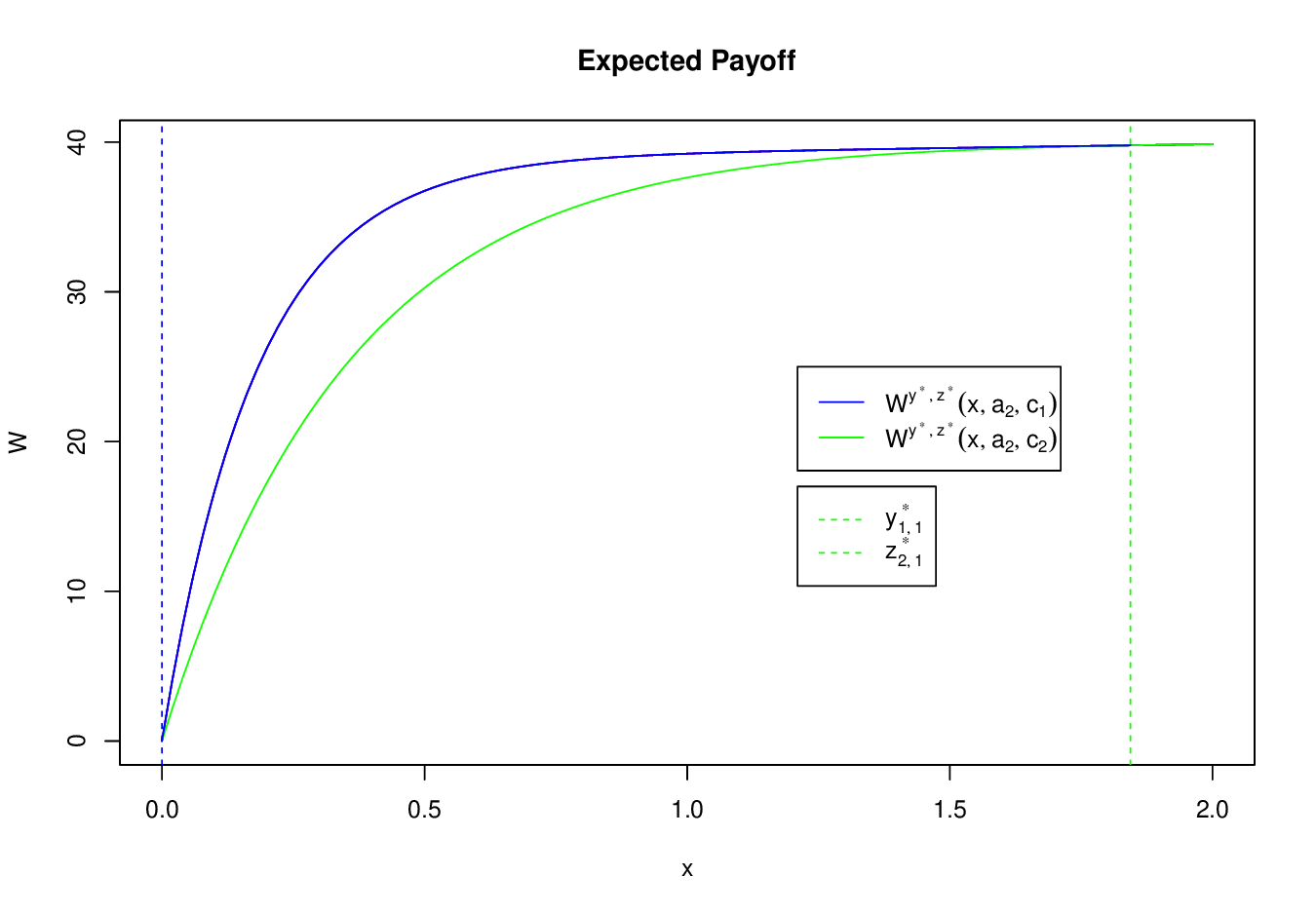"}
		\caption{Example 2}
		\label{fig:plot-2-v1}
	\end{subfigure}%
	\begin{subfigure}{.5\textwidth}
		\centering
		\includegraphics[width=1\textwidth]{"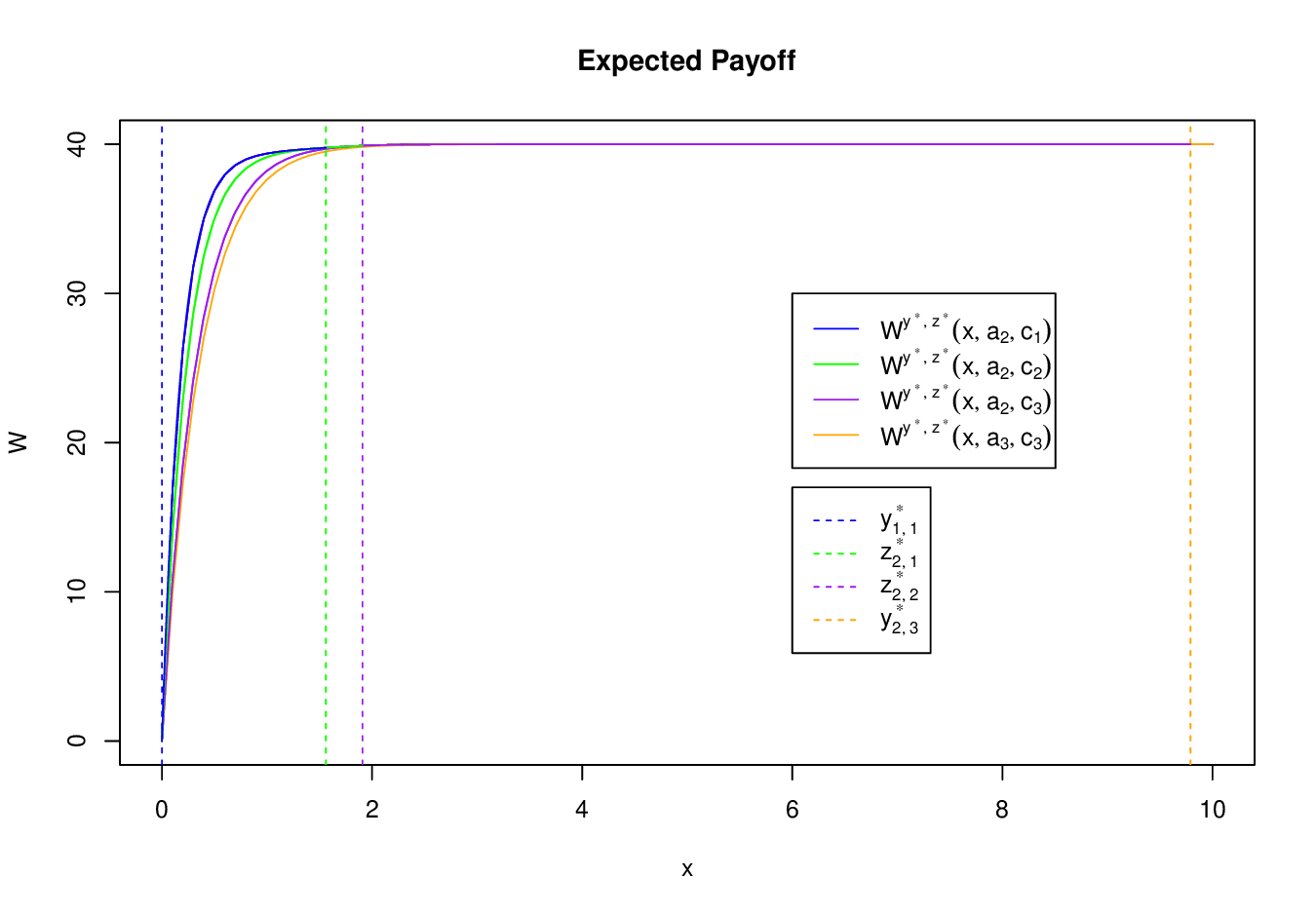"}
		\caption{Example 3: a zoomed-out perspective}
		\label{fig:plot-3-v1}
	\end{subfigure}
%\end{figure}
%\begin{figure}[hbt!]\ContinuedFloat
	\begin{subfigure}{.5\textwidth}
		\centering
		\includegraphics[width=1\textwidth]{"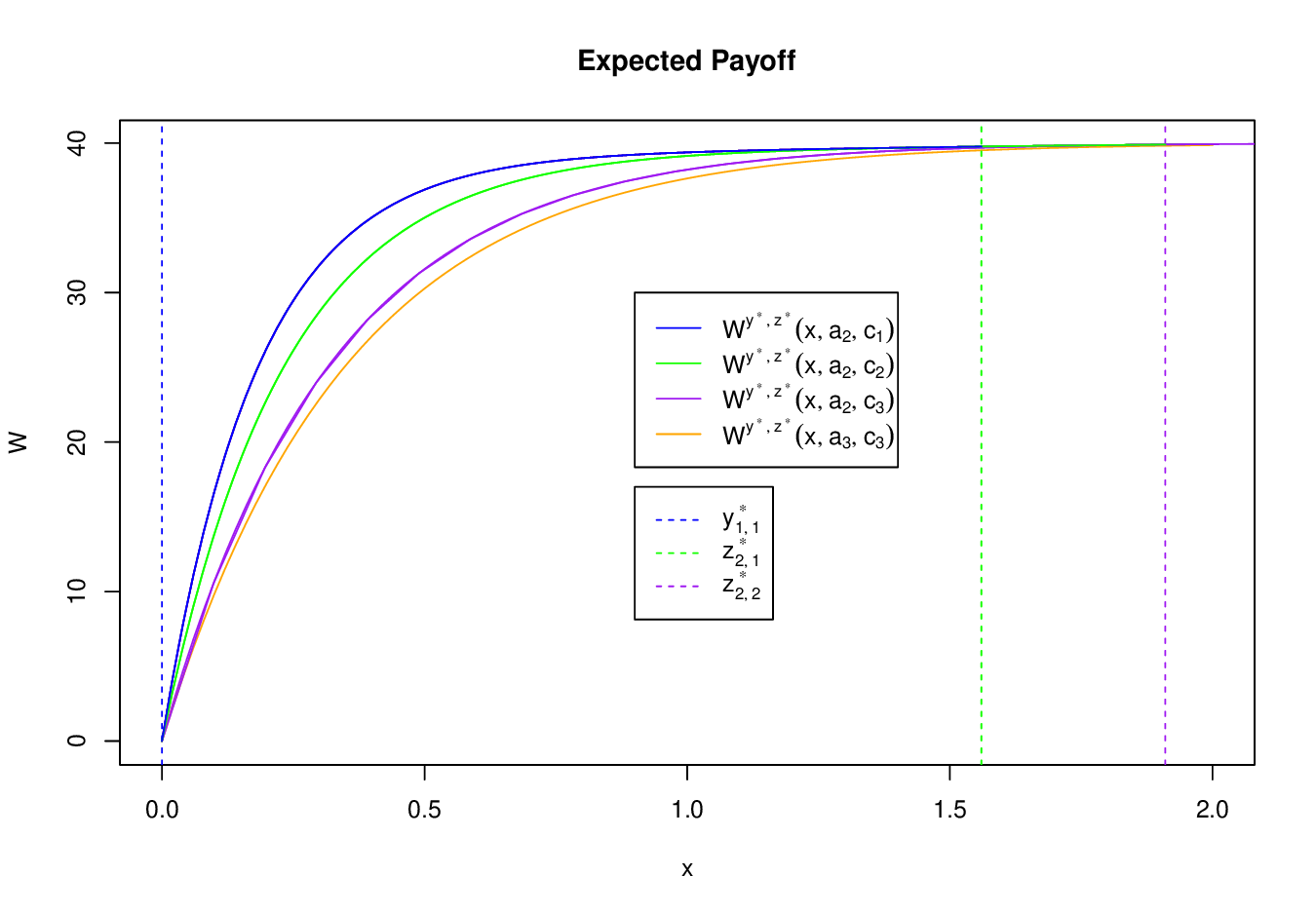"}
		\caption{Example 3: a zoomed-in perspective}
		\label{fig:plot-3-v2}
	\end{subfigure}
 \caption{Examples 1, 2, and 3}
 \label{figure:global}
\end{figure}

%Discrete Case 33 #3.R
Finally, consider the case where $\mu=10$, $\mathscr{A}=\{0.80,0.85,0.90\}$, and $\mathscr{C}=\{2,3,4\}$. As in the previous examples, the maximum dividend rate and the minimum retention level remain unchanged. The main difference is the presence of an intermediate level between the extremes. We obtain the following optimal threshold levels: $y^*_{1,1}=0$, $z^*_{2,1}=1.56$, $z^*_{2,2}=1.91$, and $y^*_{2,3}=9.79$. The sequence of retention-dividend level changes is as follows: $(0.90,2)\stackrel{x=0}{\longrightarrow}(0.85,2)\stackrel{x=1.56}{\longrightarrow}(0.85,3)\stackrel{x=1.91}{\longrightarrow}(0.85,4)\stackrel{x=9.79}{\longrightarrow}(0.80,4)$, and this is illustrated in Figures \ref{fig:plot-3-v1} and \ref{fig:plot-3-v2}. As in the second example, the optimal strategy is to increase the reinsurance level immediately. The dividend level is then increased to its maximum before the reinsurance level is maximized, provided that ruin does not occur during these level changes.

\begin{table}[hbt!]
    \centering
    \begin{tabular}{cc}
        \begin{subtable}[t]{0.45\linewidth}
            \centering
            
            \begin{tabular}{@{}cccccc@{}}
\toprule
$\mu$ & $y_{1,1}^*$ & $z^*_{1,1}$ & $y_{1,2}^*$ & $z_{2,1}^*$ &  Strategy \\ \midrule
2   &$\infty$&0&1209.44&N/A&D-R\\
3   &$\infty$&10.67&1000.02&N/A&D-R\\
4   & $\infty$  & 16.98  & 782.9  & N/A  & D-R\\
6   &$\infty$& 13.04& 348.49& N/A& D-R\\
8   &$\infty$&3.24&37.33&N/A&D-R\\
10  &0&$\infty$&N/A&1.92&R-D\\
\bottomrule
\end{tabular}
\caption{Sensitivity analysis for $\mu$}
\label{table:mu}
        \end{subtable} \vspace{1em} &
        \begin{subtable}[t]{0.45\linewidth}
            \centering
            \begin{tabular}{@{}cccccc@{}}
\toprule
$\sigma$ & $y_{1,1}^*$ & $z^*_{1,1}$ & $y_{1,2}^*$ & $z_{2,1}^*$ &  Strategy \\ \midrule
0.375   &$\infty$&6.15&333.66&N/A&D-R\\
 0.75  & $\infty$  & 11.86  & 336.73  & N/A  & D-R\\
1.5   &$\infty$& 13.04& 348.49& N/A& D-R\\
3   &$\infty$&16.24&389.44&N/A&D-R\\
6  &$\infty$&20.4&507.03&N/A&D-R\\
16  &0&$\infty$&N/A&6.72&R-D\\
\bottomrule
\end{tabular}
            \caption{Sensitivity analysis for $\sigma$}
            \label{table:sigma}
        \end{subtable} \\
        \vspace{1em}
        
        \begin{subtable}[t]{0.45\linewidth}
            \centering
            \begin{tabular}{@{}cccccc@{}}
\toprule
$b$ & $y_{1,1}^*$ & $z^*_{1,1}$ & $y_{1,2}^*$ & $z_{2,1}^*$ &  Strategy \\ \midrule
0   &0&$\infty$&N/A&3.48&R-D\\
1   &$\infty$&5.62&107.1&N/A&D-R\\
2   & $\infty$  & 13.04  & 348.49  & N/A  & D-R\\
4   &$\infty$& 16.48& 893.2& N/A& D-R\\
8  &$\infty$&0&1992.24&N/A&D-R\\
\bottomrule
\end{tabular}
            \caption{Sensitivity analysis for $b$}
            \label{table:b}
        \end{subtable} \vspace{1em} &
        \begin{subtable}[t]{0.45\linewidth}
            \centering
            \begin{tabular}{@{}cccccc@{}}
\toprule
$q$ & $y_{1,1}^*$ & $z^*_{1,1}$ & $y_{1,2}^*$ & $z_{2,1}^*$ &  Strategy \\ \midrule
0.025   &$\infty$&47.43&1414.25&N/A&D-R\\
 0.05  & $\infty$  & 24.56  & 698.51  & N/A  & D-R\\
0.1   &$\infty$& 13.04& 348.49& N/A& D-R\\
0.5   &$\infty$&3.40&75.64&N/A&D-R\\
1  &$\infty$&1.93&41.66&N/A&D-R\\
%14  &0&$\infty$&N/A&0.0177&R-D\\
\bottomrule
\end{tabular}
\caption{Sensitivity analysis for $q$}
\label{table:q}
        \end{subtable} \\\vspace{1em}

        \begin{subtable}[t]{0.45\linewidth}
            \centering
            \begin{tabular}{@{}cccccc@{}}
\toprule
$a_1$ & $y_{1,1}^*$ & $z^*_{1,1}$ & $y_{1,2}^*$ & $z_{2,1}^*$ &  Strategy \\ \midrule
0.81   &$\infty$&15.26&344.36&N/A&D-R\\
 0.85  & $\infty$  & 15.26  & 348.49  & N/A  & D-R\\
0.9   &$\infty$& 13.04& 348.49& N/A& D-R\\
0.95   &$\infty$&11.03&348.49&N/A&D-R\\
1  &$\infty$&8.91&348.49&N/A&D-R\\
\bottomrule
\end{tabular}
\caption{Sensitivity analysis for $a_1$}
\label{table:a_1}
        \end{subtable} \vspace{1em} &
        \begin{subtable}[t]{0.45\linewidth}
            \centering
            
\begin{tabular}{@{}cccccc@{}}
\toprule
$a_2$ & $y_{1,1}^*$ & $z^*_{1,1}$ & $y_{1,2}^*$ & $z_{2,1}^*$ &  Strategy \\ \midrule
0.1   &$\infty$&13.04&1496.84&N/A&D-R\\
0.5   &$\infty$&13.04&834.14&N/A&D-R\\
0.8   & $\infty$  & 13.04  & 348.49  & N/A  & D-R\\
0.85   &$\infty$& 13.04& 272.39& N/A& D-R\\
0.89  &$\infty$&13.04&213.53&N/A&D-R\\
\bottomrule
\end{tabular}
            \caption{Sensitivity analysis for $a_2$}
            \label{table:a_2}
        \end{subtable} \\\vspace{1em}

        \begin{subtable}[t]{0.45\linewidth}
            \centering
            \begin{tabular}{@{}cccccc@{}}
\toprule
$c_1$ & $y_{1,1}^*$ & $z^*_{1,1}$ & $y_{1,2}^*$ & $z_{2,1}^*$ &  Strategy \\ \midrule
0   &$\infty$&12.66&348.49&N/A&D-R\\
 1  & $\infty$  & 12.77  & 348.49  & N/A  & D-R\\
2   &$\infty$& 13.04& 348.49& N/A& D-R\\
3   &$\infty$&14.21&348.49&N/A&D-R\\
3.5  &$\infty$&16.47&348.49&N/A&D-R\\
\bottomrule
\end{tabular}
\caption{Sensitivity analysis for $c_1$}
\label{table:c_1}
        \end{subtable} \vspace{1em} &
        \begin{subtable}[t]{0.45\linewidth}
            \centering
            \begin{tabular}{@{}cccccc@{}}
\toprule
$c_2$ & $y_{1,1}^*$ & $z^*_{1,1}$ & $y_{1,2}^*$ & $z_{2,1}^*$ &  Strategy \\ \midrule
2.5   &$\infty$&3.89&42.9&N/A&D-R\\
3   &$\infty$&5.60&105.98&N/A&D-R\\
4   & $\infty$  & 13.04  & 348.49  & N/A  & D-R\\
6   &$\infty$& 22.31& 906.18& N/A& D-R\\
8  &$\infty$&25.99&1479.92&N/A&D-R\\
\bottomrule
\end{tabular}
            \caption{Sensitivity analysis for $c_2$}
            \label{table:c_2}
        \end{subtable} \\\vspace{1em}

        \begin{subtable}[t]{0.5\linewidth}
            \centering
            \begin{tabular}{@{}cccccc@{}}
\toprule
$(a_1,a_2)$ & $y_{1,1}^*$ & $z^*_{1,1}$ & $y_{1,2}^*$ & $z_{2,1}^*$ &  Strategy \\ \midrule
$(0.5,0.4)$   &$\infty$&14.28&999.09&N/A&D-R\\
$(0.7,0.6)$   &$\infty$&15.93&669.88&N/A&D-R\\
$(0.9,0.8)$  & $\infty$  & 13.04  & 348.49  & N/A  & D-R\\
$(0.95,0.85)$  &$\infty$& 11.03& 272.39& N/A& D-R\\
$(1,0.9)$  &$\infty$&8.91&201.12&N/A&D-R\\
\bottomrule
\end{tabular}
            \caption{Sensitivity analysis for $(a_1,a_2)$}
            \label{table:a_1-a_2}
        \end{subtable} \vspace{1em} &
        \begin{subtable}[t]{0.5\linewidth}
            \centering
            \begin{tabular}{@{}cccccc@{}}
\toprule
$(c_1,c_2)$ & $y_{1,1}^*$ & $z^*_{1,1}$ & $y_{1,2}^*$ & $z_{2,1}^*$ &  Strategy \\ \midrule
$(0,2)$   &0&$\infty$&N/A&2.92&R-D\\
 $(1,3)$  & $\infty$  & 5.16  & 105.98  & N/A  & D-R\\
$(2,4)$   &$\infty$& 13.04& 348.49& N/A& D-R\\
$(3,5)$   &$\infty$&20&623.72&N/A&D-R\\
$(4,6)$  &$\infty$&29.07&906.18&N/A&D-R\\
\bottomrule
\end{tabular}
\caption{Sensitivity analysis for $(c_1,c_2)$}
\label{table:c_1-c_2}
        \end{subtable}
        \end{tabular}
    \caption{Comparison of optimal threshold levels with base case: $\mu=6$, $\sigma=1.5$, $b=2$, $q=0.1$, $a_1=0.9$, $a_2=0.8$, $c_1=2$, and $c_2=4$}
\end{table}

{Following the numerical examples, we now investigate the impact of the model parameters on the optimal retention and dividend threshold strategies through a sensitivity analysis. For our analysis, we use the model parameters from Example 1 of Figure \ref{fig:plot-1-v1} as our base case: $\mu=6$, $\sigma=1.5$, $b=2$, $q=0.1$, $a_1=0.9$, $a_2=0.8$, $c_1=2$, and $c_2=4$. The sensitivity analysis varies the following parameters: the adjusted mean $\mu$, the adjusted volatility $\sigma$, the nonhomogeneous term $b$, the discount rate $q$, the retention levels $a_1$ and $a_2$, and the dividend rates $c_1$ and $c_2$.

In each table, the columns for $y_{1,1}^*$ and $z_{1,1}^*$ represent the optimal retention and threshold levels given the initial retention level $a_1$ and the initial dividend rate $c_1$. If $y_{1,1}^*<\infty$ and $z_{1,1}^*=\infty$, then the retention level must be decreased first. Otherwise, the dividend rate must be increased first. The column for $y_{1,2}^*$ represents the optimal retention threshold level following an increase in the dividend rate, while the column for $z_{2,1}^*$ represents the optimal dividend threshold level after a decrease in the retention level. A value of ``N/A" for $y_{1,2}^*$ means that the retention level has already been decreased, whereas an ``N/A" for $z_{2,1}^*$ means that the dividend rate has already been increased. The rightmost column represents the optimal strategy: ``D-R" signifies that the dividend rate should be increased first, followed by a decrease in the retention level, whereas ``R-D" signifies that the retention level must be decreased first, followed by an increase in the dividend rate.

It can be observed that there is an inverse relationship between the optimal reinsurance threshold level $y_{1,2}^*$ and the parameters $\mu$, $q$, $a_2$, and $(a_1,a_2)$ (refer to Tables \ref{table:mu}, \ref{table:q}, \ref{table:a_2}, and \ref{table:a_1-a_2}). A higher $\mu$ implies a higher drift for the reserve process, to which the downward trend of $y_{1,2}^*$ can be attributed. As $q$ rises, the incentive to hold more reserves decreases because the future ``costs" associated with these reserves are also less significant. When $a_2$ is higher, the change in retention level from $a_1=0.90$ becomes larger. The same holds when the pair $(a_1,a_2)$ has higher values.

In contrast, a direct relationship can be observed between $y_{1,2}^*$ and the parameters $\sigma$, $b$, $c_2$, and $(c_1,c_2)$ (refer to Tables \ref{table:sigma}, \ref{table:b}, \ref{table:c_2}, and \ref{table:c_1-c_2}). A higher $\sigma$ implies a higher diffusion for the reserve process, which corresponds to higher volatility. As either $b$ or $c_2$ increases, the drift for the reserve process decreases, which makes lowering the retention level less advantageous. This trend also holds if both $c_1$ and $c_2$ increase simultaneously.

For the optimal dividend threshold level $z_{1,1}^*$, there is an inverse relationship with the parameters $q$, $a_1$, and $(a_1,a_2)$ (see Tables \ref{table:q}, \ref{table:a_1}, and \ref{table:a_1-a_2}). As $q$ increases, the valuation of future dividend flows decreases, leading to an optimal strategy of increasing the dividend rate at lower threshold levels. As $a_1$ increases, the \emph{expected} reserve level $\mathbb E[X_t]$ also increases, making it more attractive to increase the dividend rate at a lower threshold level. The same can be said when both $a_1$ and $a_2$ increase simultaneously.

On the other hand, $z_{1,1}^*$ exhibits a direct relationship with the parameters $\sigma$, $c_1$, $c_2$, and $(c_1,c_2)$ (see Tables \ref{table:sigma}, \ref{table:c_1}, \ref{table:c_2}, and \ref{table:c_1-c_2}). Higher dividend rates make increasing the dividend rate less attractive. A smaller $\sigma$ implies lower volatility, allowing a lower threshold level for a dividend increase. 

There is no noticeable trend for $z_{1,1}^*$ in Tables \ref{table:mu} and \ref{table:b}. It remains constant in Table \ref{table:a_2} since $z_{1,1}^*$ does not depend on the next retention level $a_2$. This is also similar to the stable behavior of $y_{1,2}^*$ with respect to $c_1$ in Table \ref{table:c_1}, which can be attributed to the fact that $y_{1,2}^*$ depends on $c_2$, and not on $c_1$.

The optimal strategy is predominantly to increase the dividends first and then decrease the retention level (i.e., D-R). However, a shift to the R-D strategy occurs in ``extreme" cases, as illustrated in Tables \ref{table:mu}, \ref{table:sigma}, \ref{table:b}, and \ref{table:c_1-c_2}.  If the adjusted mean $\mu$ is sufficiently large or if $b$ is relatively small,  the drift of the reserve process becomes large, making it more advantageous to decrease the retention level (or, equivalently, to increase the reinsurance coverage). Similarly, if $\sigma$ is excessively high, the resulting volatility in reserve levels necessitates an immediate decrease in the retention level. The same behavior is observed if both $c_1$ and $c_2$ are sufficiently small. It is also noteworthy that when the optimal strategy is R–D, the retention level is decreased immediately.}

\section{Conclusion}\label{section:conc}
In this paper, we have extended the literature on optimal dividend payout problems with ratcheting by incorporating an irreversible reinsurance constraint, under which the reinsurance and dividend levels are restricted to a finite set. Through a dynamic programming approach, we have derived the HJB equation. We have presented a method to determine the optimal threshold levels for the dividend and reinsurance variables. The sensitivity analysis suggests that it is almost always optimal to increase the dividends before increasing the reinsurance coverage. A possible direction for future research is to consider jump-diffusion processes on top of the Brownian motion to model the reserve level. Drawdown constraints, which allow the levels to decrease by a fixed proportion of the current level, could also be incorporated for both dividend and reinsurance levels.

%The authors would also like to note that they have recently completed the analysis of the closed interval case, and the details can be found in the extended version of this paper on \href{https://arxiv.org/abs/2408.16989}{arXiv}. 

%\section*{Declaration of competing interest}
%The authors declare no competing interests.

%\section*{CRediT authorship contribution statement}
%\textbf{Tim J. Boonen}: Conceptualization, Formal analysis, Supervision, Writing - original draft, Writing - review and editing. \textbf{Engel John C. Dela Vega}: Conceptualization, Formal analysis, Software, Writing - original draft, Writing - review and editing

\appendix
\section{Proofs of Section \ref{section:model}, and Intermediate Lemmas and Propositions}\label{appendix:a}

\begin{proof}[Proof of Proposition \ref{bound and monotone}]
	From Remark \ref{remark on no constraint case}, since $V$ is bounded above by $V_{NC}$ and that $\lim_{x\to\infty}V_{NC}(x)=\frac{\ol c}{q}$, it follows that $V(x,a,c)$ is bounded by $\frac{\ol c}{q}$.

    The monotonicity of $V$ with respect to $a$ and $c$ follows from Remark \ref{remark on inclusion}. Given $0\leq x_1<x_2$, let $\pi_1:=(A^1,C^1)\in\Pi^{\mathscr{A},\mathscr{C}}_{x_1,a,c}$ be an admissible strategy for any $a\in\mathscr{A}$ and $c\in\mathscr{C}$. Moreover, define $\pi_2:=(A^2,C^2)\in\Pi^{\mathscr{A},\mathscr{C}}_{x_2,a,c}$ as
	\begin{equation*}
		{A^2(t):=
		\begin{cases}
			A^1(t),&\mbox{if $t<\tau_{\pi_1}$},\\
			\underline a,&\mbox{if $t\geq\tau_{\pi_1}$},
		\end{cases}}\quad\mbox{and}\quad
		C^2(t):=
		\begin{cases}
			C^1(t),&\mbox{if $t<\tau_{\pi_1}$},\\
			\overline c,&\mbox{if $t\geq\tau_{\pi_1}$},
		\end{cases}
	\end{equation*}
    where $\tau_{\pi_1}$ is the corresponding ruin time of the controlled process $\{X^{\pi_1}_t\}_{t\geq 0}$ with $X^{\pi_1}_0=x_1$.	Then,
	\begin{equation*}
    \begin{aligned}
		J(x_1;A^1,C^1)&=\mathbb{E}\left[\int_0^{\tau_{\pi_1}}e^{-qs}C^1(s)ds\right]\\
        &\leq\mathbb{E}\left[\int_0^{\tau_{\pi_1}}e^{-qs}C^{1}(s)ds\right]+\mathbb{E}\left[\int_{\tau_{\pi_1}}^{\tau_{\pi_2}}e^{-qs}\overline c\, ds\right]\\
        &=J(x_2;A^2,C^2),
        \end{aligned}
	\end{equation*}
	where $\tau_{\pi_2}$ is the corresponding ruin time of the controlled process $\{X^{\pi_2}_t\}_{t\geq 0}$ with $X^{\pi_2}_0=x_2$. It then implies that $V(x,a,c)$ is nondecreasing in $x$.
\end{proof}

\begin{proof}[Proof of Proposition \ref{Lipschitz property}]
	By Proposition \ref{bound and monotone}, $V(x,a,c)$ is nondecreasing in $x$ and $a$ and nonincreasing in $c$. Hence, we obtain the first inequality $0\leq V(x_2,a_1,c_1)-V(x_1,a_2,c_2)$ for all $0\leq x_1\leq x_2$, $a_1,a_2\in\mathscr{A}$ with $a_2\leq a_1$, and $c_1,c_2\in\mathscr{C}$ with $c_1\leq c_2$. For the second inequality, we divide the proof into three parts.
	
	(Part I) We want to show that there exists $L_1>0$ such that
	\begin{equation}\label{a2}
		V(x_2,a,c)-V(x_1,a,c)\leq L_1\left(x_2-x_1\right)
	\end{equation}
	for all $0\leq x_1\leq x_2$. Let $\epsilon_1>0$ and $\pi^1:=(A,C)\in\Pi^{\mathscr{A},\mathscr{C}}_{x_2,a,c}$ such that
	\begin{equation}\label{a3}
		J(x_2;A,C)\geq V(x_2,a,c)-\epsilon_1.
	\end{equation}
	Let $\tau_1$ be the ruin time of the associated process $\{X_t^{\pi^1}\}_{t\geq 0}$. Define $\tilde{\pi}:=(\tilde{A},\tilde{C})\in\Pi^{\mathscr{A},\mathscr{C}}_{x_1,a,c}$ as $\tilde{\pi}_t=\pi^1_t$. Let $\tilde{\tau}\leq\tau_1$ be the ruin time of the associated process $\{X^{\tilde{\pi}}_t\}_{t\geq 0}$. It then holds that $X_t^{\pi^1}-X_t^{\tilde{\pi}}=x_2-x_1$ for $t\leq\tilde{\tau}$. Using the properties of conditional expectations and a shift in stopping times, we obtain
	\begin{equation}\label{a3.5}
		\begin{aligned}
			J(x_2;A,C)-J(x_1;\tilde{A},\tilde{C})
			&=\mathbb{E}\left[e^{-q\tilde{\tau}}\mathbb{E}\left[\int_{0}^{\tau_1-\tilde{\tau}}e^{-qu}C(\tilde{\tau}+u)du\bigg|\mathcal{F}_{\tilde{\tau}}\right]\right]\\
			&\leq\mathbb{E}\left[\mathbb{E}\left[\int_{0}^{\tau_1-\tilde{\tau}}e^{-qu}C(\tilde{\tau}+u)du\bigg|\mathcal{F}_{\tilde{\tau}}\right]\right].
		\end{aligned}
	\end{equation}
	{From Theorem 2 of \citet{claisse2016}, we have
	\begin{equation*}
		\mathbb{E}\left[\int_{0}^{\tau_1-\tilde{\tau}}e^{-qu}C(\tilde{\tau}+u)du\bigg|\mathcal{F}_{\tilde{\tau}}\right]=J\left(x_2-x_1;\{A(t+\tilde{\tau})\}_{t\geq 0},\{C(t+\tilde{\tau})\}_{t\geq 0}\right),\quad \mathbb P-a.s.
	\end{equation*}
	By the inclusion property discussed in Remark \ref{remark on inclusion}, we have $(\{A(t+\tilde{\tau})\}_{t\geq 0},\{C(t+\tilde{\tau})\}_{t\geq 0})\in \Pi^{\mathscr{A},\mathscr{C}}_{x_2-x_1,a,c}$. Write $\overline a:=\argmax \mathscr A$ and $\underline c:=\argmin \mathscr C$. Then,}
	\begin{equation}\label{a4}
    \begin{aligned}
       J(x_2;A,C)-J(x_1;\tilde{A},\tilde{C})
       &\leq\mathbb{E}\left[\mathbb{E}\left[\int_{0}^{\tau_1-\tilde{\tau}}e^{-qu}C(\tilde{\tau}+u)du\bigg|\mathcal{F}_{\tilde{\tau}}\right]\right]\\
		&{=\mathbb{E}\left[J\left(x_2-x_1;\{A(t+\tilde{\tau})\}_{t\geq 0},\{C(t+\tilde{\tau})\}_{t\geq 0}\right)\right]}\\
        &{\leq \mathbb{E}\left[V\left(x_2-x_1;a,c)\right)\right]}\\
		&{\leq V(x_2-x_1,\overline a,\underline c).}
    \end{aligned}
		\end{equation}
	By Remark \ref{remark on no constraint case}, we then have
	\begin{equation*}
    \begin{aligned}
		V(x_2,a,c)-V(x_1,a,c)&\stackrel{\eqref{a3}}{\leq}J(x_2;A,C)-J(x_1;\tilde{A},\tilde{C})+\epsilon_1\\
		&{\stackrel{\eqref{a4}}{\leq}V(x_2-x_1,\overline a,\underline c)+\epsilon_1}\\
        &\leq V_{NC}(x_2-x_1)+\epsilon_1\\
        &\stackrel{\eqref{no constraint Lipschitz}}{\leq} L_0(x_2-x_1)+\epsilon_1.
    \end{aligned}
	\end{equation*}
	Since $\epsilon_1$ is arbitrary, then \eqref{a2} holds with $L_1=L_0$.
	
	(Part II) We now want to show that there exists $L_2>0$ such that
	\begin{equation}\label{a6}
		V(x,a,c_1)-V(x,a,c_2)\leq L_2\left(c_2-c_1\right)
	\end{equation}
	for all $c_1,c_2\in\mathscr{C}$ with $c_1\leq c_2$. Take $\epsilon_2>0$ and $\pi^2:=(A,C)\in\Pi^{\mathscr{A},\mathscr{C}}_{x,a,c_1}$ such that
	\begin{equation}\label{a7}
		J(x;A,C)\geq V(x,a,c_1)-\epsilon_2.
	\end{equation}
	Define the stopping time $\bar{T}=\min\{t:C(t)\geq c_2\}$ and denote $\tau_2$ the ruin time of the associated process $\{X_t^{\pi^2}\}_{t\geq 0}$. Consider $\bar{\pi}:=(A,\bar{C})\in\Pi^{\mathscr{A},\mathscr{C}}_{x,a,c_2}$ such that $\bar{C}(t)=c_2\mathbf{1}_{\{t<\bar{T}\}}+C(t)\mathbf{1}_{\{t\geq\bar{T}\}}$. Denote by $\{X_t^{\bar{\pi}}\}_{t\geq 0}$ the associated reserve process and by $\bar{\tau}\leq\tau_2$ the corresponding ruin time. Then,
	\begin{equation*}
		\bar{C}(t)-C(t)=
		\begin{cases}
			c_2-C(t),&\mbox{if $t<\bar T$},\\
			0,&\mbox{if $t\geq \bar{T}$}.
		\end{cases}
	\end{equation*}
	It then follows that $\bar{C}(t)-C(t)\leq c_2-c_1$ for any $t\geq 0$. Hence,
	\begin{equation}\label{a7.5}
    \begin{aligned}
		X_{\bar{\tau}}^{\pi^2}&=X_{\bar{\tau}}^{\pi^2}-X_{\bar{\tau}}^{\bar{\pi}}=\int_0^{\bar{\tau}}\left[\bar{C}(s)-C(s)\right]ds\leq\int_0^{\bar{\tau}}\left[c_2-c_1\right]ds=(c_2-c_1)\bar{\tau}.
        \end{aligned}
	\end{equation}
	Using similar arguments made in \eqref{a3.5} and \eqref{a4}, we then obtain 
	\begin{equation*}
    \begin{aligned}
		\mathbb{E}\left[\int_{\bar{\tau}}^{\tau_2}e^{-qs}C(s)ds\right]&=\mathbb{E}\left[\mathbb{E}\left[e^{-q\bar{\tau}}\int_0^{\tau_2-\bar{\tau}}e^{-qu}C(u+\bar{\tau})du\bigg|\mathcal{F}_{\bar{\tau}}\right]\right]\leq{\mathbb{E}\left[e^{-q\bar{\tau}}V(X^{\pi^2}_{\bar{\tau}},\overline a,\underline c)\right]}.
        \end{aligned}
	\end{equation*}
	Together with Remark \ref{remark on no constraint case}, we then have
	\begin{equation*}
    \begin{aligned}
		V(x,a,c_1)-V(x,a,c_2)&\stackrel{\eqref{a7}}{\leq}J(x;A,C)-J(x;A,\bar{C})+\epsilon_2\\
		&=\mathbb{E}\left[\int_0^{\bar{\tau}}e^{-qs}\left[C(s)-\bar C(s)\right]ds\right]+\mathbb{E}\left[\int_{\bar{\tau}}^{\tau_2}e^{-qs}C(s)ds\right]+\epsilon_2\\
		&\leq 0+\mathbb{E}\left[\int_{\bar{\tau}}^{\tau_2}e^{-qs}C(s)ds\right]+\epsilon_2\\
        &\leq{\mathbb{E}\left[e^{-q\bar{\tau}}V(X^{\pi^2}_{\bar{\tau}},\overline a,\underline c)\right]+\epsilon_2}\\
		&\leq\mathbb{E}\left[e^{-q\bar{\tau}}V_{NC}(X^{\pi^2}_{\bar{\tau}})\right]+\epsilon_2\\
        &\stackrel{\eqref{a7.5}}{\leq}\mathbb{E}\left[e^{-q\bar{\tau}}V_{NC}\left((c_2-c_1)\bar{\tau}\right)\right]+\epsilon_2\\
		&\stackrel{\eqref{no constraint Lipschitz}}{\leq}L_0(c_2-c_1)\mathbb{E}\left[e^{-q\bar{\tau}}\bar{\tau}\right]+\epsilon_2.
        \end{aligned}
	\end{equation*}
	Since $\epsilon_2$ is arbitrary, then choosing $L_2:=L_0\max_{t\geq 0}\{e^{-qt}t\}=\frac{L_0}{qe}$ yields \eqref{a6}.
	
	(Part III) Lastly, we want to show that there exists $L_3>0$ such that
	\begin{equation}\label{a6v2}
		V(x,a_1,c)-V(x,a_2,c)\leq L_3\left(a_1-a_2\right),
	\end{equation}
	for all $a_1,a_2\in\mathscr{A}$ with $a_2\leq a_1$. Take $\epsilon_3>0$ and $\pi^3:=(A,C)\in\Pi^{\mathscr{A}\mathscr{C}}_{x,a_1,c}$ such that
	\begin{equation}\label{a7v2}
		J(x;A,C)\geq V(x,a_1,c)-\epsilon.
	\end{equation}
	Define the stopping time $\hat{T}:=\min\{t:A(t)\leq a_2\}$ and denote $\tau_3$ the ruin time of the associated process $\{X_t^{\pi^3}\}_{t\geq 0}$. Consider $\hat{\pi}:=(\hat{A},C)\in\Pi^{\mathscr{A},\mathscr{C}}_{x,a_2,c}$ such that $\hat{A}(t)=a_2\mathbf{1}_{\{t<\hat{T}\}}+A(t)\mathbf{1}_{\{t\geq\bar{T}\}}$. Denote by $\{X_t^{\hat{\pi}}\}_{t\geq0}$ the associated reserve process and by $\hat{\tau}\leq \tau_3$ the corresponding ruin time. Then,
	\begin{equation*}
		A(t)-\hat{A}(t)=
		\begin{cases}
			A(t)-a_2,&\mbox{if $t<\hat{T}$},\\
			0,&\mbox{if $t\geq\hat{T}$}.
		\end{cases}
	\end{equation*}
	It then follows that ${A}(t)-\hat A(t)\leq a_1-a_2$ for any $t\geq 0$. Hence,
	\begin{equation}\label{a7.5v2}
    \begin{aligned}
		X_{\hat{\tau}}^{\pi^3}&=X_{\hat{\tau}}^{\pi^3}-X_{\hat{\tau}}^{\hat{\pi}}\\
		&=\int_0^{\hat{\tau}}\mu\left[A(s)-\hat{A}(s)\right]ds+\int_0^{\hat{\tau}}\sigma\left[A(s)-\hat A(s)\right]dW_s\\
		&{\leq\int_0^{\hat{\tau}}\mu\left[a_1-a_2\right]ds+\int_0^{\hat{\tau}}\sigma\left[A(s)-\hat A(s)\right]dW_s}\\
		&{=\mu(a_1-a_2)\hat{\tau}+\int_0^{\hat{\tau}}\sigma\left[A(s)-\hat A(s)\right]dW_s}.
        \end{aligned}
	\end{equation}
	Using similar arguments made in \eqref{a3.5} and \eqref{a4}, we then obtain
	\begin{equation*}
    \begin{aligned}
		\mathbb{E}\left[\int_{\hat{\tau}}^{\tau_3}e^{-qs}C(s)ds\right]&=\mathbb{E}\left[\mathbb{E}\left[e^{-q\hat{\tau}}\int_0^{\tau_3-\hat{\tau}}e^{-qu}C(u+\hat{\tau})du\bigg|\mathcal{F}_{\hat{\tau}}\right]\right]\leq{\mathbb{E}\left[e^{-q\hat{\tau}}V(X_{\hat{\tau}}^{\pi^3},\overline a,\underline c)\right]}.
        \end{aligned}
	\end{equation*}
	Together with Remark \ref{remark on no constraint case}, we then have
	\begin{equation*}
    \begin{aligned}
		V(x,a_1,c)-V(x,a_2,c)&\stackrel{\eqref{a7v2}}{\leq}J(x;A,C)-J(x;\hat{A},C)+\epsilon_3\\
		&=\mathbb{E}\left[\int_{\hat{\tau}}^{\tau_3}e^{-qs}C(s)ds\right]+\epsilon_3\\
		&{\leq \mathbb{E}\left[e^{-q\hat{\tau}}V(X_{\hat{\tau}}^{\pi^3},\overline a,\underline c)\right]+\epsilon_3}\\
		&\leq \mathbb{E}\left[e^{-q\hat{\tau}}V_{NC}(X^{\pi^3}_{\hat{\tau}})\right]+\epsilon_3\\
		&{\stackrel{\eqref{a7.5v2}}{\leq}\mathbb{E}\left[e^{-q\hat{\tau}}V_{NC}\left(\mu(a_1-a_2)\hat{\tau}+\int_0^{\hat{\tau}}\sigma\left[A(s)-\hat A(s)\right]dW_s\right)\right]+\epsilon_3}\\
		&{\stackrel{\eqref{no constraint Lipschitz}}{\leq}L_0(a_1-a_2)\mathbb{E}\left[e^{-q\hat{\tau}}\mu\hat{\tau}\right]+\sigma L_0\mathbb E \left[e^{-q\hat{\tau}}\int_0^{\hat{\tau}}\sigma\left[A(s)-\hat A(s)\right]dW_s\right]+\epsilon_3}\\
        &{\leq L_0(a_1-a_2)\mathbb{E}\left[e^{-q\hat{\tau}}\mu\hat{\tau}\right]+\sigma L_0\mathbb E \left[\int_0^{\hat{\tau}}\sigma\left[A(s)-\hat A(s)\right]dW_s\right]+\epsilon_3}\\
        &{= L_0(a_1-a_2)\mathbb{E}\left[e^{-q\hat{\tau}}\mu\hat{\tau}\right]+\epsilon_3}.
        \end{aligned}
	\end{equation*}
	{The term $\mathbb E \left[\int_0^{\hat{\tau}}\sigma\left[A(s)-\hat A(s)\right]dW_s\right]$ vanishes since the integrand is bounded above by $\sigma (a_1-a_2)$}. Since $\epsilon_3$ is arbitrary, then choosing $L_3:=L_0\mu\max_{t\geq 0}\{e^{-qt}t\}=\frac{L_0\mu}{qe}$ yields \eqref{a6v2}. Take $L=\max\{L_1,L_2,L_3\}$. The proof is complete.
\end{proof}

\begin{mypr}\label{claim on theta 1 and 2}
        {The following properties hold:
        \begin{itemize}
            \item[(i)] $\theta_2(a,c)<0<\theta_1(a,c)$,
            \item[(ii)] $\frac{\partial}{\partial c}\theta_1(a,c),\frac{\partial}{\partial c}\theta_2(a,c)>0$,
            \item[(iii)] $\frac{\partial}{\partial a}\theta_1(a,c)<0$,
            \item[(iv)] $\frac{\partial}{\partial a}\theta_2(a,c)< 0$ if $\mu a-2(b+c)< 0$ and $\sigma<\sqrt{\frac{-\mu(\mu a-2(b+c))}{2qa}}$,
            \item[(v)] $\frac{\partial}{\partial a}\theta_2(a,c)\geq 0$ if $\mu a-2(b+c)\geq 0$ \underline {or} if $\mu a-2(b+c)< 0$ and $\sigma\geq \sqrt{\frac{-\mu(\mu a-2(b+c))}{2qa}}$. Equality holds if and only if $\mu a-2(b+c)< 0$ and $\sigma=\sqrt{\frac{-\mu(\mu a-2(b+c))}{2qa}}$.
        \end{itemize}}
        \begin{comment}
	 	We have the following properties: (i) $\theta_2(a,c)<0<\theta_1(a,c)$, (ii) $\frac{\partial}{\partial c}\theta_1(a,c),\frac{\partial}{\partial c}\theta_2(a,c)>0$, (iii) $\frac{\partial}{\partial a}\theta_1(a,c)<0$, (iv) $\frac{\partial}{\partial a}\theta_2(a,c)< 0$ if $\mu a-2(b+c)< 0$ and $\sigma<\sqrt{\frac{-\mu(\mu a-2(b+c))}{2qa}}$, and (v) $\frac{\partial}{\partial a}\theta_2(a,c)\geq 0$ if $\mu a-2(b+c)\geq 0$ or if $\mu a-2(b+c)< 0$ and $\sigma\geq \sqrt{\frac{-\mu(\mu a-2(b+c))}{2qa}}$, where equality holds if and only if $\mu a-2(b+c)< 0$ and $\sigma=\sqrt{\frac{-\mu(\mu a-2(b+c))}{2qa}}$.
        \end{comment}
	 \end{mypr}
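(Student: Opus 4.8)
The plan is to work from the defining quadratic rather than the closed‑form expressions, so that most claims reduce to one‑line implicit‑differentiation arguments. Write $F(\theta;a,c):=\tfrac12\sigma^2a^2\theta^2+(\mu a-b-c)\theta-q$ for the characteristic polynomial of the homogeneous part of \eqref{HJB base}, so that $\theta_1(a,c)$ and $\theta_2(a,c)$ are its two roots, and set $\beta:=b+c-\mu a$ and $D:=\sqrt{\beta^2+2q\sigma^2a^2}>0$, so that $\theta_{1,2}=(\beta\pm D)/(\sigma^2a^2)$; throughout, $a>0$ is implicit since the expressions are undefined at $a=0$. Part (i) is immediate: $2q\sigma^2a^2>0$ gives $D>|\beta|$, so the numerator $\beta+D$ of $\theta_1$ is positive and the numerator $\beta-D$ of $\theta_2$ is negative (equivalently, $\theta_1\theta_2=-2q/(\sigma^2a^2)<0$ together with $\theta_1>\theta_2$).

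For the derivative statements I would differentiate the identity $F(\theta_i(a,c);a,c)=0$. Here $F_\theta(\theta;a,c)=\sigma^2a^2\theta-\beta$, so $F_\theta(\theta_1)=D>0$ and $F_\theta(\theta_2)=-D<0$; also $F_c=-\theta$ and $F_a=\sigma^2a\theta^2+\mu\theta=\theta(\sigma^2a\theta+\mu)$. Part (ii) is then $\partial_c\theta_1=-F_c(\theta_1)/F_\theta(\theta_1)=\theta_1/D>0$ and $\partial_c\theta_2=-F_c(\theta_2)/F_\theta(\theta_2)=-\theta_2/D>0$, using (i). For (iii), $\sigma^2a\theta_1=(\beta+D)/a>0$ gives $\sigma^2a\theta_1+\mu>0$, hence $F_a(\theta_1)=\theta_1(\sigma^2a\theta_1+\mu)>0$, and therefore $\partial_a\theta_1=-F_a(\theta_1)/D<0$.

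For (iv)–(v) the same device gives $\sigma^2a\theta_2+\mu=(\beta-D+\mu a)/a=(b+c-D)/a$, hence $F_a(\theta_2)=\theta_2(b+c-D)/a$ and
\[
\partial_a\theta_2=-\frac{F_a(\theta_2)}{F_\theta(\theta_2)}=\frac{\theta_2\,(b+c-D)}{aD}.
\]
Since $\theta_2<0$ and $a,D>0$, the sign of $\partial_a\theta_2$ is the sign of $D-(b+c)$, so it remains to decide when $D<b+c$. If $b+c\le 0$ then $D>0\ge b+c$, so $\partial_a\theta_2>0$. If $b+c>0$, then $D<b+c$ is equivalent to $D^2<(b+c)^2$, and the one‑line computation $D^2-(b+c)^2=\mu a\bigl(\mu a-2(b+c)\bigr)+2q\sigma^2a^2$ shows $\partial_a\theta_2<0$ iff $2q\sigma^2a<\mu\bigl(2(b+c)-\mu a\bigr)$, which forces $\mu a-2(b+c)<0$ (and then automatically $b+c>\mu a/2>0$, so the case split is consistent) and, given this, is precisely $\sigma<\sqrt{-\mu(\mu a-2(b+c))/(2qa)}$; this is (iv). For (v): when $\mu a-2(b+c)\ge 0$ (which covers $b+c\le0$) we get $D^2-(b+c)^2\ge 2q\sigma^2a^2>0$, hence $D>b+c$ and $\partial_a\theta_2>0$; and when $\mu a-2(b+c)<0$ with $\sigma\ge\sqrt{-\mu(\mu a-2(b+c))/(2qa)}$ we get $D\ge b+c>0$, hence $\partial_a\theta_2\ge 0$, with equality iff $D=b+c$, i.e. iff $\sigma$ equals that threshold.

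I expect the only real obstacle to be the bookkeeping in (iv)–(v): one must track the sign of $b+c$ (which need not be positive, since $b=(\theta-\gamma)\lambda\mu_0$ may be negative) in order to pass legitimately between comparing $D$ with $b+c$ and comparing $D^2$ with $(b+c)^2$, and then match the resulting inequality on $\sigma^2$ to the exact threshold and equality condition stated. Everything else is a short implicit‑differentiation computation resting on the identities $F_\theta(\theta_1)=D$ and $F_\theta(\theta_2)=-D$.
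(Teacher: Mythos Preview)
Your proof is correct and takes a genuinely different route from the paper. The paper works directly with the closed-form expressions for $\theta_1,\theta_2$: it differentiates them explicitly, then for (iv)--(v) manipulates the resulting expression $\frac{1}{\sigma^2a^3}\bigl[\mu a-2(b+c)+\frac{(\mu a-(b+c))(\mu a-2(b+c))+2q\sigma^2a^2}{\sqrt{(b+c-\mu a)^2+2q\sigma^2a^2}}\bigr]$ by squaring and rearranging, and for (iii) argues by contradiction through a three-way case split on the signs of $2(b+c)-\mu a$ and of the fraction. Your implicit-differentiation approach bypasses all of this: the identities $F_\theta(\theta_1)=D$, $F_\theta(\theta_2)=-D$ give $\partial_c\theta_i=\pm\theta_i/D$ for (ii) and reduce (iii) to the single observation $\sigma^2a\theta_1+\mu>0$, while (iv)--(v) become the clean sign question ``is $D$ larger or smaller than $b+c$?'', settled by the one-line identity $D^2-(b+c)^2=\mu a(\mu a-2(b+c))+2q\sigma^2a^2$. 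What you gain is transparency and brevity, especially in (iii) where the paper's contradiction argument is replaced by a direct positivity check; what the paper's approach buys is that it never leaves the explicit formulas, so a reader who only wants to verify numerics can follow it line by line without the implicit-function step. Your handling of the sign of $b+c$ (noting that $b+c\le 0$ is absorbed into $\mu a-2(b+c)\ge 0$) is exactly the care needed to make the squaring step legitimate, and your equality analysis matches the statement.
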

\begin{proof}
	Since $\sqrt{(b+c-\mu a)^2+2q\sigma^2a^2}>\sqrt{(b+c-\mu a)^2}\geq b+c-\mu a$, then the first result follows. 
	
	Taking the partial derivative with respect to $c$, we have
	\begin{align*}
		\frac{\partial}{\partial c}\theta_1(a,c)=\frac{1}{\sigma^2a^2}\left[1+\frac{b+c-\mu a}{\sqrt{(b+c-\mu a)^2+2q\sigma^2a^2}}\right]
	\end{align*}
	and
	\begin{align*}
		\frac{\partial}{\partial c}\theta_2(a,c)=\frac{1}{\sigma^2a^2}\left[1-\frac{b+c-\mu a}{\sqrt{(b+c-\mu a)^2+2q\sigma^2a^2}}\right].
	\end{align*}
	Since $\frac{b+c-\mu a}{\sqrt{(b+c-\mu a)^2+2q\sigma^2a^2}}<1$, then the second result follows. 
	
	Write $\ol b:=b+c$. Taking the partial derivative of $\theta_2(a,c)$ with respect to $a$, we have
	\begin{align*}
		\frac{\partial}{\partial a}\theta_2(a,c)=\frac{1}{\sigma^2a^3}\left[\mu a-2\ol b+\frac{(\mu a-\ol b)(\mu a-2\ol b)+2q\sigma^2a^2}{\sqrt{(\ol b-\mu a)^2+2q\sigma^2a^2}}\right].
	\end{align*}
	If $\mu a-2\ol b\geq 0$, then $\mu a-\ol b>0$. Consequently, we have $\frac{\partial}{\partial a}\theta_2(a,c)>0$, which is the first ``if" of the fifth result. 
	
	We now consider the case where $\mu a-2\ol b< 0$ and  $\sigma<\sqrt{\frac{-\mu(\mu a-2\ol b)}{2qa}}$. Then,
	\begin{equation}\label{eqn:a4}
		\begin{aligned}
			0&>2q\sigma^2a^2+\mu a(\mu a-2\ol b)\\
			&=4q^2\sigma^4a^4+4q\sigma^2a^2(\mu a-\ol b)(\mu a -2\ol b)+(\mu a-\ol b)^2(\mu a-2\ol b)^2-(\mu a-2\ol b)^2\left[(\mu a-\ol b)^2+2q\sigma^2a^2\right].
		\end{aligned}
	\end{equation}
	Hence,
    \begin{align*}
		\mu a-2\ol b+\frac{(\mu a-\ol b)(\mu a-2\ol b)+2q\sigma^2a^2}{\sqrt{(\ol b-\mu a)^2+2q\sigma^2a^2}}<0,
	\end{align*}
	which proves the fourth result.
	
	Suppose $\mu a-2\ol b< 0$ and $\sigma\geq \sqrt{\frac{-\mu(\mu a-2\ol b)}{2qa}}$. Similar to the previous case, we have
	\begin{equation}\label{eqn:a1}
		\begin{aligned}
			0&\leq 2q\sigma^2a^2+\mu a(\mu a-2\ol b)\\
			&=4q^2\sigma^4a^4+4q\sigma^2a^2(\mu a-\ol b)(\mu a -2\ol b)+(\mu a-\ol b)^2(\mu a-2\ol b)^2-(\mu a-2\ol b)^2\left[(\mu a-\ol b)^2+2q\sigma^2a^2\right].
		\end{aligned}
	\end{equation}
	Hence,
	\begin{align}\label{eqn:a2}
		|\mu a-2\ol b|\leq \left|\frac{(\mu a-\ol b)(\mu a-2\ol b)+2q\sigma^2a^2}{\sqrt{(\ol b-\mu a)^2+2q\sigma^2a^2}}\right|.
	\end{align}
	From the first line of \eqref{eqn:a1}, we have that $0\leq 2q\sigma^2a^2+\mu a(\mu a-2\ol b)$. Then, 
	\begin{align}\label{eqn:a3}
		0< 2q\sigma^2a^2+\mu a(\mu a-2\ol b)-\ol b(\mu a-2\ol b)=(\mu a-\ol b)(\mu a-2\ol b)+2q\sigma^2a^2.
	\end{align}
	Combining \eqref{eqn:a2} and \eqref{eqn:a3} yields
	\begin{align*}
		\mu a-2\ol b+\frac{(\mu a-\ol b)(\mu a-2\ol b)+2q\sigma^2a^2}{\sqrt{(\ol b-\mu a)^2+2q\sigma^2a^2}}\geq 0,
	\end{align*}
	which proves the second ``if" of the fifth result. 
	
	Taking the partial derivative of $\theta_1(a,c)$ with respect to $a$, we have
	\begin{align*}
		\frac{\partial}{\partial a}\theta_1(a,c)=-\frac{1}{\sigma^2a^3}\left[2\ol b-\mu a+\frac{(\mu a-\ol b)(\mu a-2\ol b)+2q\sigma^2a^2}{\sqrt{(\ol b-\mu a)^2+2q\sigma^2a^2}}\right].
	\end{align*}
	We want to prove that $\frac{\partial}{\partial a}\theta_1(a,c)<0$. Suppose otherwise, that is,
	\begin{align*}
		2\ol b-\mu a+\frac{(\mu a-\ol b)(\mu a-2\ol b)+2q\sigma^2a^2}{\sqrt{(\ol b-\mu a)^2+2q\sigma^2a^2}}\leq 0.
	\end{align*}
	Write $d:=\frac{(\mu a-\ol b)(\mu a-2\ol b)+2q\sigma^2a^2}{\sqrt{(\ol b-\mu a)^2+2q\sigma^2a^2}}$. The inequality holds if one of the following cases is true: (1) $2\ol b-\mu a\leq 0$ and $d\geq 0$ with $(2\ol b-\mu a)^2\geq d^2$, (2) $2\ol b-\mu a\geq 0$ and $d\leq 0$ with $(2\ol b-\mu a)^2\leq d^2$, or (3) $2\ol b-\mu a\leq 0$ and $d\leq 0$. For case (1), we obtain in the reverse direction of \eqref{eqn:a4} that $\sigma^2 <\frac{-\mu(\mu a-2\ol b)}{2qa}<0$, which is a contradiction. For case (2), it must be the case that $\mu a-\ol b>0$ since $\mu a-\ol b\leq 0$ will contradict the premise that $d\leq 0$. We obtain \eqref{eqn:a1} in a reverse direction. From the first line of \eqref{eqn:a1}, we have $0\leq 2q\sigma^2a^2+\mu a(\mu a-2\ol b)<2q\sigma^2a^2+(\mu a-\ol b)(\mu a-2\ol b)$, which leads to a contradiction since it must be the case that $d\leq 0$. For case (3), since $\mu a-2\ol b>0$, then $\mu a-\ol b>0$. This is also a contradiction since it must be the case that $d\leq 0$. Therefore, we obtain the third result, which completes the proof. 
\end{proof}

	The next result is a comparison principle for the finite case and is used to prove the uniqueness of the viscosity solution to the HJB equation \eqref{hjb main}. 
	
	\begin{mypr}\label{comparison viscosity}
		Suppose the following conditions hold:
		\begin{itemize}
			\item[(i)] $\underline{u}$ is a viscosity subsolution and $\overline{u}$ is a viscosity supersolution of the HJB equation \eqref{hjb main} for all $x>0$,
			\item[(ii)] $\ul{u}$ and $\ol{u}$ are nondecreasing in the variable $x$ and Lipschitz in $[0,\infty)$, and
			\item[(iii)] $\ul{u}(0)=\ol{u}(0)$ and $\lim_{x\to\infty}\ul{u}(x)\leq\frac{c_n}{q}\leq \lim_{x\to\infty}\ol{u}(x)$.
		\end{itemize}
		Then, $\ul{u}\leq\ol{u}$ in $[0,\infty)$.
	\end{mypr}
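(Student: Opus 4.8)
The plan is to argue by contradiction via the doubling-of-variables method, using the theorem on sums (Crandall--Ishii lemma) to control the second-order term. Suppose $M:=\sup_{x\geq 0}\bigl(\ul{u}(x)-\ol{u}(x)\bigr)>0$. First I would note that, by hypothesis (ii), $\ul u$ and $\ol u$ are Lipschitz and, being nondecreasing with finite limits at infinity, bounded; combined with $\ul u(0)=\ol u(0)$ and $\limsup_{x\to\infty}(\ul u(x)-\ol u(x))\leq \tfrac{c_n}{q}-\tfrac{c_n}{q}=0<M$ from hypothesis (iii), this shows that $\ul u-\ol u$ attains the value $M$ somewhere in $(0,\infty)$. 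For $\varepsilon>0$ I would introduce
\[
\Phi_\varepsilon(x,y):=\ul u(x)-\ol u(y)-\frac{(x-y)^2}{2\varepsilon},\qquad (x,y)\in[0,\infty)^2,
\]
and, using the boundedness of $\ul u,\ol u$, the penalization, and the behaviour of $\ul u-\ol u$ near $0$ and $\infty$, show that $\Phi_\varepsilon$ attains its maximum at some $(x_\varepsilon,y_\varepsilon)$ lying, for $\varepsilon$ small, in a fixed compact subset of $(0,\infty)^2$. The usual estimates give $\Phi_\varepsilon(x_\varepsilon,y_\varepsilon)\geq M$ and, since $\Phi_\varepsilon(x_\varepsilon,y_\varepsilon)\geq\Phi_\varepsilon(y_\varepsilon,y_\varepsilon)$ and $\ul u$ is Lipschitz, $\frac{(x_\varepsilon-y_\varepsilon)^2}{2\varepsilon}\leq \ul u(x_\varepsilon)-\ul u(y_\varepsilon)\leq L|x_\varepsilon-y_\varepsilon|$, so $|x_\varepsilon-y_\varepsilon|\leq 2L\varepsilon\to0$ and, along a subsequence, $x_\varepsilon,y_\varepsilon\to\bar x$ with $\bar x>0$.

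Next I would write the viscosity inequalities at $(x_\varepsilon,y_\varepsilon)$. Since $x\mapsto\Phi_\varepsilon(x,y_\varepsilon)$ is maximized at $x_\varepsilon$, $y\mapsto\Phi_\varepsilon(x_\varepsilon,y)$ at $y_\varepsilon$, and $\ul u,\ol u$ are continuous, the theorem on sums yields $p_\varepsilon:=\tfrac{x_\varepsilon-y_\varepsilon}{\varepsilon}$ and reals $X_\varepsilon\leq Y_\varepsilon$ with $(p_\varepsilon,X_\varepsilon)$ in the second-order superjet of $\ul u$ at $x_\varepsilon$ and $(p_\varepsilon,Y_\varepsilon)$ in the second-order subjet of $\ol u$ at $y_\varepsilon$. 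The subsolution property of $\ul u$ and the supersolution property of $\ol u$ (both available since $x_\varepsilon,y_\varepsilon>0$) then say that the maximum of the differential quantity $\frac12\sigma^2 a_i^2 X_\varepsilon+(\mu a_i-b-c_j)p_\varepsilon-q\ul u(x_\varepsilon)+c_j$ and the two obstacle quantities $V^{a_{i+1},c_j}(x_\varepsilon)-\ul u(x_\varepsilon)$, $V^{a_i,c_{j+1}}(x_\varepsilon)-\ul u(x_\varepsilon)$ is nonnegative, while the maximum of $\frac12\sigma^2 a_i^2 Y_\varepsilon+(\mu a_i-b-c_j)p_\varepsilon-q\ol u(y_\varepsilon)+c_j$ and $V^{a_{i+1},c_j}(y_\varepsilon)-\ol u(y_\varepsilon)$, $V^{a_i,c_{j+1}}(y_\varepsilon)-\ol u(y_\varepsilon)$ is nonpositive.

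I would then split into cases according to which term realizes the first maximum. If an obstacle term does, say $V^{a_{i+1},c_j}(x_\varepsilon)-\ul u(x_\varepsilon)\geq 0$, then the second inequality forces $V^{a_{i+1},c_j}(y_\varepsilon)-\ol u(y_\varepsilon)\leq 0$, so $\ul u(x_\varepsilon)-\ol u(y_\varepsilon)\leq V^{a_{i+1},c_j}(x_\varepsilon)-V^{a_{i+1},c_j}(y_\varepsilon)\to 0$ by continuity of $V^{a_{i+1},c_j}$ and $x_\varepsilon,y_\varepsilon\to\bar x$, contradicting $\ul u(x_\varepsilon)-\ol u(y_\varepsilon)\geq\Phi_\varepsilon(x_\varepsilon,y_\varepsilon)\geq M>0$; the term involving $V^{a_i,c_{j+1}}$ is identical. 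If instead the differential quantity realizes the maximum, subtracting the supersolution inequality from the subsolution one (the first-order slopes $p_\varepsilon$ coincide) and using $X_\varepsilon\leq Y_\varepsilon$ gives $q\bigl(\ul u(x_\varepsilon)-\ol u(y_\varepsilon)\bigr)\leq \tfrac12\sigma^2 a_i^2(X_\varepsilon-Y_\varepsilon)\leq 0$, again contradicting $M>0$. In every case we reach a contradiction, so $M\leq 0$ and $\ul u\leq\ol u$ on $[0,\infty)$.

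The hard part is the last case: with the penalization $\tfrac{(x-y)^2}{2\varepsilon}$ the naive touching test functions have second derivatives $\tfrac1\varepsilon$ and $-\tfrac1\varepsilon$, whose difference blows up, so one genuinely needs the Hessian estimate $X_\varepsilon\leq Y_\varepsilon$ from the theorem on sums to discard the diffusion contribution; the strict positivity of the discount rate $q$ then turns the surviving inequality into the contradiction (when $a_i=0$ the equation is first order and this step is routine). The other point requiring care, by contrast to the first-order parts, is checking that $\Phi_\varepsilon$ attains a maximum and that $(x_\varepsilon,y_\varepsilon)$ stays in a compact subset of $(0,\infty)^2$, which is exactly where the boundary value at $0$, the growth condition at infinity, and the boundedness of $\ul u,\ol u$ enter. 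Note, finally, that unlike the closed-interval case the obstacle terms here carry no derivatives, so it suffices to double only in the variable $x$.
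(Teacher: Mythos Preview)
Your argument is correct and reaches the same conclusion, but it is organized quite differently from the paper's proof. The paper first perturbs the supersolution multiplicatively, replacing $\ol u$ by $\ol u^{\gamma_0}=\gamma_0 h_j\,\ol u$ with $\gamma_0>1$, which makes all three supersolution inequalities strict; it then doubles variables on the constrained set $\{0\le x\le y\le\tilde x\}$ with the nonstandard penalty $\Phi^{\xi}(x,y)=\tfrac{\xi}{2}(x-y)^2+\tfrac{2m}{\xi^2(y-x)+\xi}$, using the extra term to force the maximizer off the diagonal $x=y$ and off $x=0$. After that it invokes the Crandall--Ishii lemma and subtracts the two differential inequalities to obtain $\ul u(x_\xi)-\ol u^{\gamma_0}(y_\xi)\le 0$, contradicting $M>0$.

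Your route is more economical: you use the plain penalty $(x-y)^2/(2\varepsilon)$ on the full quadrant, exclude the boundary directly from $\ul u(0)=\ol u(0)$, the monotonicity of $\ol u$, and the Lipschitz bound, and then split on which term of the subsolution maximum is active. The obstacle branches are dispatched by the inequality $\ul u(x_\varepsilon)-\ol u(y_\varepsilon)\le V^{\cdot,\cdot}(x_\varepsilon)-V^{\cdot,\cdot}(y_\varepsilon)\to 0$, which the paper's writeup does not spell out; the differential branch is the same Crandall--Ishii subtraction. What the paper's scaling buys is strictness on the supersolution side (useful in the closed-interval proof where the obstacle terms are derivatives), but here, as you observe, the obstacle terms carry no derivatives and the discount $q>0$ already produces the contradiction, so the scaling is unnecessary.

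One small imprecision: you assert that $\ol u$ has a finite limit at infinity and is therefore bounded, but hypothesis~(iii) only gives $\lim_{x\to\infty}\ol u(x)\ge c_n/q$, which allows $\ol u$ to be unbounded. This does not affect your argument, since attainment of the maximum of $\Phi_\varepsilon$ only needs $\ul u$ bounded above (which it is) and $\ol u$ bounded below (by $\ol u(0)$), together with the quadratic penalty; and the compact localization of $(x_\varepsilon,y_\varepsilon)$ follows from $|x_\varepsilon-y_\varepsilon|\le 2L\varepsilon$ and $\limsup_{x\to\infty}(\ul u-\ol u)\le 0$.
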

    \begin{proof}[Proof of Proposition \ref{comparison viscosity}]
	Suppose that there is a point $x_0\in[0,\infty)$ such that
	\begin{equation*}
		\ul{u}(x_0)-\ol{u}(x_0)>0.
	\end{equation*} 
	For every $\gamma>1$, define $h_j:=1+\eta e^{-\frac{c_j}{c_n}}$ and $\ol{u}^{\gamma}(x)=\gamma h_j\ol{u}(x)$, where
	\begin{equation*}
		\eta=\frac{\ul{u}(x_0)-\ol{u}(x_0)}{2\ol{u}(x_0)}>0.
	\end{equation*}
	Then, $\varphi$ is a test function for supersolution of $\ol{u}$ at $x$ if and only if $\varphi^{\gamma}:=\gamma h_j\varphi$ is a test function for supersolution of $\ol{u}^{\gamma}$ at $x$. Using \eqref{hjb supersolution} and the fact that $1-\gamma h_j<1-\gamma<0$,
	\begin{equation*}
    \begin{aligned}
		\mathcal{L}^{a_i,c_j}(\varphi^{\gamma})(x)&=\frac{\sigma^2a_i^2}{2}\gamma h_j\varphi_{xx}(x)+(\mu a_i-b-c_j)\gamma h_j\varphi_x(x)-q\gamma h_j\varphi(x)+c_j\\
		&=\gamma h_j\mathcal{L}^{a_i,c_j}(\varphi)(x)+c(1-\gamma h_j)<0,
        \end{aligned}
	\end{equation*}
	and, provided $\varphi(x)>0$,
	\begin{equation*}
    \begin{aligned}
		V^{a_{i+1},c_j}(x)-\varphi^{\gamma}(x)=V^{a_{i+1},c_j}(x)-\gamma h_j\varphi(x)< V^{a_{i+1},c_j}(x)-\varphi(x)\leq 0,\\
		V^{a_{i},c_{j+1}}(x)-\varphi^{\gamma}(x)=V^{a_{i},c_{j+1}}(x)-\gamma h_j\varphi(x)< V^{a_{i},c_{j+1}}(x)-\varphi(x)\leq 0.
        \end{aligned}
	\end{equation*}
	
	Take $\gamma_0>1$ such that
	\begin{equation}\label{a17.5}
		\ul{u}(x_0)-\ol{u}^{\gamma_0}(x_0)>0.
	\end{equation}
	Define
	\begin{equation}\label{a18}
		M:=\sup_{x\geq 0}\left[\ul{u}(x)-\ol{u}^{\gamma_0}(x)\right].
	\end{equation}
	Since $\lim_{x\to\infty}\ul{u}(x)\leq\frac{c_n}{q}\leq \lim_{x\to\infty}\ol{u}(x)$, there exists $\tilde{x}>x_0$ such that
	\begin{equation}\label{a19}
		\ul{u}(x)-\ol{u}^{\gamma_0}(x)\leq 0\quad\mbox{for $x\geq \tilde{x}$}.
	\end{equation}
	We then have
	\begin{equation*}
		0\stackrel{\eqref{a17.5}}{<}\ul{u}(x_0)-\ol{u}^{\gamma_0}(x_0)\stackrel{\eqref{a18}}{\leq} M=\max_{x\in[0,\tilde{x}]}\left[\ul{u}(x)-\ol{u}^{\gamma_0}(x)\right].
	\end{equation*}
	
	Define {$x^*:=\argmax_{x\in[0,\tilde x]}\left[\ul{u}(x)-\ol{u}^{\gamma_0}(x)\right]$ }. Consider the set
	\begin{equation*}
		\mathcal{S}:=\left\{(x,y):0\leq x\leq y\leq \tilde{x}\right\}
	\end{equation*}
	and, for all $\xi>0$, the functions
	\begin{equation}\label{a21}
		\begin{aligned}
			\Phi^{\xi}(x,y)&=\frac{\xi}{2}(x-y)^2+\frac{2m}{\xi^2(y-x)+\xi},\\
			\Sigma^{\xi}(x,y)&=\ul{u}(x)-\ol{u}^{\gamma_0}(y)-\Phi^{\xi}(x,y),
		\end{aligned}
	\end{equation}
	where $m>0$ is a Lipschitz constant satisfying
	\begin{equation}\label{a19.5}
		\begin{aligned}
			\left|\ul{u}(x)-\ul{u}(y)\right|&\leq m|x-y|,\\
			\left|\ol{u}^{\gamma_0}(x)-\ol{u}^{\gamma_0}(y)\right|&\leq m|x-y|.
		\end{aligned}
	\end{equation}
	The partial derivatives of $\Phi^{\xi}$ satisfy
	\begin{equation}\label{a21.5}
		\Phi^{\xi}_x(x,y)=\xi(x-y)+\frac{2m}{(\xi(y-x)+1)^2}=-\Phi^{\xi}_y(x,y).
	\end{equation}

	Define $M^{\xi}=\max_{\mathcal{S}}\Sigma^{\xi}$ and $(x_{\xi},y_{\xi})=\argmax_{\mathcal{S}}\Sigma^{\xi}$. We then obtain
	\begin{equation*}
		M^{\xi}\geq \Sigma^{\xi}(x^*,x^*)=M-\Phi^{\xi}(x^*,x^*)=M-\frac{2m}{\xi}.
	\end{equation*}
	Hence,
	\begin{equation}\label{a22}
		\liminf_{\xi\to\infty}M^{\xi}\geq M.
	\end{equation}
	
	It can be shown that the maximum is not achieved on the boundary $y=x$, and similar arguments apply to the boundaries $x=0$ and $y=\tilde{x}$. Thus, there exists $\xi_0$ large enough such that if $\xi\geq\xi_0$, then $(x_{\xi},y_{\xi})\notin\partial\mathcal{S}$.
	
	Using the inequality $\Sigma^{\xi}(x_{\xi},x_{\xi})+\Sigma^{\xi}(y_{\xi},y_{\xi})\leq 2\Sigma^{\xi}(x_{\xi},y_{\xi})$ and \eqref{a19.5}, we obtain
	\begin{equation}\label{a23}
		\xi|x_{\xi}-y_{\xi}|^2\leq 6m|x_{\xi}-y_{\xi}|.
	\end{equation}
	We can then find a sequence $\xi_{n}\to\infty$ such that $(x_{\xi_n},y_{\xi_n})\to(\hat{x},\hat{y})\in\mathcal{S}$. From \eqref{a23}, we get
	\begin{equation}\label{a24}
		|x_{\xi_n}-y_{\xi_n}|\leq \frac{6m}{\xi_{n}},
	\end{equation}
	which gives $\hat{x}=\hat{y}$ as $n\to\infty$.
	
	Since $\Sigma^{\xi}$ reaches the maximum in $(x_{\xi},y_{\xi})$ in the interior of the set $\mathcal{S}$, then 
	\begin{equation*}
		0\leq \Sigma^{\xi}(x_{\xi},y_{\xi})-\Sigma^{\xi}(x,y_{\xi})=\ul{u}(x_{\xi})-\ul{u}(x)-\Phi^{\xi}(x_{\xi},y_{\xi})+\Phi^{\xi}(x,y_{\xi}).
	\end{equation*}
	Hence, the function $\psi(x):=\Phi^{\xi}(x,y_{\xi})-\Phi^{\xi}(x_{\xi},y_{\xi})+\ul{u}(x_{\xi})$ is a test for subsolution for $\ul{u}$ at $x_{\xi}$, and so,
	\begin{equation*}
		\max\{\mathcal{L}^{a_i,c_j}(\psi)(x_{\xi}),V^{a_{i+1},c_{j}}(x_{\xi})-\psi(x_{\xi}),V^{a_i,c_{j+1}}(x_{\xi})-\psi(x_{\xi})\}\geq 0.
	\end{equation*}
	Similarly, the function $\varphi(x):=-\Phi^{\xi}(x_{\xi},y)+\Phi^{\xi}(x_{\xi},y_{\xi})+\ol{u}^{\gamma_0}(y_{\xi})$ is a test for supersolution for $\ol{u}^{\gamma_0}$ at $y_{\xi}$, and so,

	\begin{equation*}
		\max\{\mathcal{L}^{a_i,c_j}(\varphi)(y_{\xi}),V^{a_{i+1},c_{j}}(y_{\xi})-\varphi(y_{\xi}),V^{a_i,c_{j+1}}(y_{\xi})-\varphi(y_{\xi})\}\leq 0.
	\end{equation*}
	
	Assume first that the functions $\ul{u}(x)$ and $\ol{u}^{\gamma_0}(y)$ are twice continuously differentiable at $x_{\xi}$ and $y_{\xi}$, respectively. Since $\Sigma^{\xi}$ defined in \eqref{a21} reaches a local maximum at $(x_{\xi},y_{\xi})\notin\partial\mathcal{S}$, by the classical maximum principle, we have $\Sigma_x^{\xi}(x_{\xi},y_{\xi})=\Sigma_y^{\xi}(x_{\xi},y_{\xi})=0$ and
	\begin{equation}\label{a50}
		H(\Sigma^{\xi})(x_{\xi},y_{\xi}):=\begin{bmatrix}
			A-\Phi^{\xi}_{xx}(x_{\xi},y_{\xi})&-\Phi^{\xi}_{xy}(x_{\xi},y_{\xi})\\
			-\Phi^{\xi}_{xy}(x_{\xi},y_{\xi})&-B-\Phi^{\xi}_{yy}(x_{\xi},y_{\xi})
		\end{bmatrix}\preceq 0,
	\end{equation}
    where $A=\ul{u}_{xx}(x_{\xi})$, $B=\ol{u}^{\gamma_0}_{yy}(y_{\xi})$, and $D\preceq 0$ means $D$ is a negative semi-definite matrix. We also write $D_1\preceq D_2$ to mean that $D_1-D_2$ is a negative semi-definite matrix. Then, \eqref{a50} can be rewritten as
	\begin{equation*}
    \begin{aligned}
		\begin{bmatrix}
			A&0\\
			0&-B
		\end{bmatrix}&\preceq H(\Phi^{\xi})(x_{\xi},y_{\xi})\\
        &:=
		\begin{bmatrix}
			\Phi^{\xi}_{xx}(x_{\xi},y_{\xi})&\Phi^{\xi}_{xy}(x_{\xi},y_{\xi})\\
			\Phi^{\xi}_{xy}(x_{\xi},y_{\xi})&\Phi^{\xi}_{yy}(x_{\xi},y_{\xi})
		\end{bmatrix}\\
  &\stackrel{\eqref{a21.5}}{=}\left(\xi+\frac{4m\xi}{(\xi(y_{\xi}-x_{\xi})+1)^3}\right)
		\begin{bmatrix}
			1&-1\\
			-1&1
		\end{bmatrix}.
        \end{aligned}
	\end{equation*}
    Then,
	\begin{equation*}
		\ul{u}_x(x_{\xi})=\Phi^{\xi}_x(x_{\xi},y_{\xi})=\xi(x_{\xi}-y_{\xi})+\frac{2m}{(\xi(y_{\xi}-x_{\xi})+1)^2}
		=-\Phi^{\xi}_y(x_{\xi},y_{\xi})=\ol{u}^{\gamma_0}_y(y_{\xi}).
	\end{equation*}
    Moreover, since $H(\Sigma^{\xi})(x_{\xi},y_{\xi})$ is negative semi-definite,
	\begin{equation*}
    \begin{aligned}
		0&\geq 
		\begin{bmatrix}
			1&1
		\end{bmatrix}
		\begin{bmatrix}
			A&0\\
			0&-B
		\end{bmatrix}
		\begin{bmatrix}
			1\\
			1
		\end{bmatrix}-
		\left(\xi+\frac{4m\xi}{(\xi(y_{\xi}-x_{\xi})+1)^3}\right)
		\begin{bmatrix}
			1&1
		\end{bmatrix}
		\begin{bmatrix}
			1&-1\\
			-1&1
		\end{bmatrix}
		\begin{bmatrix}
			1\\
			1
		\end{bmatrix}=A-B.
        \end{aligned}
	\end{equation*}
	In the case that $\ul{u}(x)$ and $\ol{u}^{\gamma_0}(y)$ are not twice continuously differentiable at $x_{\xi}$ and $y_{\xi}$, respectively, we will use Theorem 3.2 of \citet{crandall1992}. Since the assumptions of the theorem are satisfied, then, for any $\delta>0$, there exist real numbers $A_{\delta}$ and $B_{\delta}$ such that
	\begin{equation*}
		\begin{bmatrix}
			A_{\delta}&0\\
			0&-B_{\delta}
		\end{bmatrix}
		\preceq H(\Phi^{\xi})(x_{\xi},y_{\xi})+\delta\left[H(\Phi^{\xi})(x_{\xi},y_{\xi})\right]^2
	\end{equation*}
	and
	\begin{equation}\label{a27}
		\begin{aligned}
			\frac{\sigma^2a_i^2}{2}A_{\delta}+(\mu a_i-b-c_j)\psi_x(x_{\xi})-q\psi(x_{\xi})+c_j&\geq 0\\
			\frac{\sigma^2a_i^2}{2}B_{\delta}+(\mu a_i-b-c_j)\varphi^{\gamma_0}_y(y_{\xi})-q\varphi^{\gamma_0}(y_{\xi})+c_j&\leq 0.
		\end{aligned}
	\end{equation}
	Then,
	\begin{equation}\label{a27.5}
    \begin{aligned}
		0&\geq
		\begin{bmatrix}
			1&1	
		\end{bmatrix}
		\begin{bmatrix}
			A_{\delta}&0\\
			0&-B_{\delta}
		\end{bmatrix}
		\begin{bmatrix}
			1\\
			1
		\end{bmatrix}-
		\begin{bmatrix}
			1&1	
		\end{bmatrix}
		\left[H(\Phi^{\xi})(x_{\xi},y_{\xi})+\delta\left[H(\Phi^{\xi})(x_{\xi},y_{\xi})\right]^2\right]
		\begin{bmatrix}
			1\\
			1
		\end{bmatrix}\\
		&=(A_{\delta}-B_{\delta})\\
  &\quad-\left(\xi+\frac{4m\xi}{(\xi(y_{\xi}-x_{\xi})+1)^3}+2\delta\left(\xi+\frac{4m\xi}{(\xi(y_{\xi}-x_{\xi})+1)^3}\right)^2\right)
		\begin{bmatrix}
			1&1	
		\end{bmatrix}
		\begin{bmatrix}
			1&-1\\
			-1&1
		\end{bmatrix}
		\begin{bmatrix}
			1\\
			1
		\end{bmatrix}\\
		&=A_{\delta}-B_{\delta}.
        \end{aligned}
	\end{equation}
	Since $\varphi^{\gamma_0}(y_{\xi})=\ul{u}^{\gamma_0}(y_{\xi})$, $\psi(x_{\xi})=\ul{u}(x_{\xi})$, and
   \begin{equation*}
		\varphi^{\gamma_0}_y(y_{\xi})=-\Phi^{\xi}_y(x_{\xi},y_{\xi})=\Phi^{\xi}_x(x_{\xi},y_{\xi})=\psi_x(x_{\xi}),
	\end{equation*}
  we get
	\begin{equation}\label{a28}
		\ul{u}(x_{\xi})-\ol{u}^{\gamma_0}(y_{\xi})=\psi(x_{\xi})-\varphi^{\gamma_0}(y_{\xi})\stackrel{\eqref{a27}}{\leq} \frac{\sigma^2a_i^2}{2q}(A_{\delta}-B_{\delta})\stackrel{\eqref{a27.5}}{\leq} 0.
	\end{equation}
   Hence,
	\begin{equation*}
    \begin{aligned}
		0<M&\stackrel{\eqref{a22}}{\leq} \liminf_{\xi\to\infty}M^{\xi}\leq \lim_{n\to\infty}M^{\xi_n}=\lim_{n\to\infty}\Sigma^{\xi_n}(x_{\xi_n},y_{\xi_n})\\
		&=\lim_{n\to\infty}\left[\ul{u}(x_{\xi_n})-\ol{u}^{\gamma_0}(y_{\xi_n})-\frac{\xi_n}{2}(x_{\xi_n}-y_{\xi_n})^2-\frac{2m}{\xi_n^2(y_{\xi_n}-x_{\xi_n})+\xi_n}\right]\\
		&\stackrel{\eqref{a24}}{=}\ul{u}(\hat{x})-\ol{u}^{\gamma_0}(\hat{x})\stackrel{\eqref{a28}}{\leq} 0,
        \end{aligned}
	\end{equation*}
    which is a contradiction. The proof is complete.
\end{proof}

    \begin{mylm}\label{G has maxima and min exists}
		The functions $G^{\mathcal{A}}_{i,j}$, $G^{\mathcal{C}}_{i,j}$, and $G^{\mathcal{E}}_{i,j}$ attain their respective maxima in $[0,\infty)$. Moreover, $\min\left[\argmax_{x\in[0,\infty)}G^{\mathcal{A}}_{i,j}(x)\right]$, $\min\left[\argmax_{x\in[0,\infty)}G^{\mathcal{C}}_{i,j}(x)\right]$, and $\min\left[\argmax_{x\in[0,\infty)}G^{\mathcal{E}}_{i,j}(x)\right]$ exist.
	\end{mylm}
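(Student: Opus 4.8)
Fix indices $1\le i<m$ and $1\le j<n$ (the functions are only defined in that range, so that $\wopt(\cdot,a_{i+1},c_j)$, $\wopt(\cdot,a_i,c_{j+1})$ and $\wopt(\cdot,a_{i+1},c_{j+1})$ have already been produced by the backward recursion), and abbreviate $\theta_1:=\theta_1(a_i,c_j)$, $\theta_2:=\theta_2(a_i,c_j)$, $D(x):=e^{\theta_1x}-e^{\theta_2x}$. All three of $G^{\mathcal{A}}_{i,j},G^{\mathcal{C}}_{i,j},G^{\mathcal{E}}_{i,j}$ are of the form $(\text{numerator})/D$ with the \emph{same} $D$, so I carry out the argument in detail for $G^{\mathcal{A}}_{i,j}$ and only indicate the verbatim changes for the other two at the end. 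The plan has three steps: (i) $G^{\mathcal{A}}_{i,j}$ is continuous on $[0,\infty)$ and $G^{\mathcal{A}}_{i,j}(x)\to 0$ as $x\to\infty$; (ii) $\sup_{[0,\infty)}G^{\mathcal{A}}_{i,j}>0$; and (iii) from (i)--(ii) the supremum is attained on a compact interval and the $\argmax$ has a least element. Step (ii) is the crux.

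For (i): on $(0,\infty)$ we have $D(x)>0$ because $\theta_1>0>\theta_2$ (Proposition \ref{claim on theta 1 and 2}(i)), and $\wopt(\cdot,a_{i+1},c_j)$ is continuous (Proposition \ref{Recursive formula W}), so $G^{\mathcal{A}}_{i,j}$ is continuous on $(0,\infty)$; at $x=0$ the numerator and $D$ both vanish, $D'(0)=\theta_1-\theta_2>0$, and $\wopt(\cdot,a_{i+1},c_j)$ is differentiable at $0$ (unwind the recursion of Proposition \ref{Recursive formula W} until one reaches a finite combination of exponentials), so L'H\^opital's rule makes $\lim_{x\to0^+}G^{\mathcal{A}}_{i,j}(x)$ equal to the value prescribed at $x=0$ in \eqref{G^A function}. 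Moreover the numerator $\wopt(x,a_{i+1},c_j)-\frac{c_j}{q}(1-e^{\theta_2x})$ is bounded, since $0\le\wopt\le V\le\frac{\ol{c}}{q}$ (Proposition \ref{bound and monotone}) and $e^{\theta_2x}\in(0,1]$, while $D(x)\to+\infty$; hence $G^{\mathcal{A}}_{i,j}(x)\to 0$ as $x\to\infty$, so a priori only $\sup_{[0,\infty)}G^{\mathcal{A}}_{i,j}\ge 0$.

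For (ii), the point is that $\liminf_{x\to\infty}\wopt(x,a_{i+1},c_j)>\frac{c_j}{q}$. Because $j<n$, the present Lemma applied at the already-processed state $(i+1,j)$ --- whose base instances unwind to the explicit formula $\wopt(x,a_m,c_n)=\frac{\ol c}{q}(1-e^{\theta_2(a_m,\ol c)x})$, whose asymptote $\frac{\ol c}{q}$ strictly exceeds $\frac{c_{n-1}}{q}$ --- yields a \emph{finite} optimal dividend threshold $z^*_{i+1,j}$; consequently, along $\pi^{y^*,z^*}$ started from $(a_{i+1},c_j)$ the dividend rate is raised to at least $c_{j+1}$ once the surplus becomes large, and, since the dividend rate never decreases afterwards, $\liminf_{x\to\infty}\wopt(x,a_{i+1},c_j)\ge\frac{c_{j+1}}{q}>\frac{c_j}{q}$. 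Hence the numerator of $G^{\mathcal{A}}_{i,j}$ is eventually bounded below by a positive constant while $D(x)\to+\infty$, so $G^{\mathcal{A}}_{i,j}(x)>0$ for all $x$ large enough, and therefore $g^*:=\sup_{[0,\infty)}G^{\mathcal{A}}_{i,j}>0$. This is exactly the main obstacle: the strict inequality cannot be dispensed with, because if one only knew $\sup G^{\mathcal{A}}_{i,j}\ge 0$ then the pathological situation in which the value $0$ is approached only as $x\to\infty$ and attained nowhere would remain possible and the Lemma would be false; making the estimate rigorous requires precisely the finiteness of the dividend thresholds (valid for $j<n$) supplied inductively by the Lemma itself, together with monotonicity of $\wopt$ in the dividend argument.

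For (iii): since $G^{\mathcal{A}}_{i,j}(x)\to 0<g^*/2$, there is $M$ with $G^{\mathcal{A}}_{i,j}(x)<g^*/2<g^*$ for all $x\ge M$, so $g^*=\max_{x\in[0,M]}G^{\mathcal{A}}_{i,j}(x)$, attained by continuity on the compact interval $[0,M]$; thus $\argmax_{[0,\infty)}G^{\mathcal{A}}_{i,j}=\big(G^{\mathcal{A}}_{i,j}\big)^{-1}(\{g^*\})$ is nonempty, contained in $[0,M]$, and closed (the preimage of a closed set under a continuous map), hence compact, and a nonempty compact subset of $\mathbb{R}$ contains its infimum --- which is therefore the required $\min\big[\argmax_{[0,\infty)}G^{\mathcal{A}}_{i,j}\big]$. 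For $G^{\mathcal{C}}_{i,j}$ and $G^{\mathcal{E}}_{i,j}$ the argument is identical, using in step (ii) that $\liminf_{x\to\infty}\wopt(x,a_i,c_{j+1})$ and $\liminf_{x\to\infty}\wopt(x,a_{i+1},c_{j+1})$ both strictly exceed $\frac{c_j}{q}$, which holds since $c_{j+1}\le\ol c$ and $\pi^{y^*,z^*}$ started from either of those states can only raise the dividend rate further.
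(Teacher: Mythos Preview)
Your proof is correct and follows essentially the same structure as the paper's: continuity on $[0,\infty)$, $G^{\mathcal A}_{i,j}(x)\to 0$ as $x\to\infty$, eventual strict positivity (so $\sup>0$), hence attainment of the maximum on a compact interval, and closedness of the argmax set giving a least element. The paper's treatment of your step (ii) is more direct: it simply invokes $\lim_{x\to\infty}\wopt(x,a_{i+1},c_j)=\frac{c_n}{q}>\frac{c_j}{q}$, which is exactly what the backward recursion you describe yields once one knows that at every already-processed state at least one threshold is finite.

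One minor imprecision worth fixing: you assert that the Lemma at state $(i+1,j)$ produces a \emph{finite} $z^*_{i+1,j}$, but this need not hold---for instance in Case~1 of the threshold construction one has $z^*_{i+1,j}=+\infty$. What is always true (for $(i+1,j)\neq(m,n)$) is that $y^*_{i+1,j}\wedge z^*_{i+1,j}<\infty$, so that for large $x$ the value $\wopt(x,a_{i+1},c_j)$ coincides with that of a strictly later state in the recursion; iterating, one eventually reaches a state where the dividend level has increased (at the latest when the retention index hits $m$, where only the dividend threshold is available). This repairs your argument without altering its substance, and recovers precisely the limit $\frac{c_n}{q}$ the paper uses.
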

    \begin{proof}
	Letting $x\to\infty$, we have the following inequality
    \begin{equation*}
		\lim_{x\to\infty}\wopt(x,a_i,c_j)=\frac{c_n}{q}>\frac{c_j}{q}.
	\end{equation*}
    Since $\theta_1(a_i,c_j)>0>\theta_2(a_i,c_j)$, then, for large enough $x$,
	\begin{equation}\label{G is positive}
		G^{\mathcal{A}}_{i,j}(x)=\frac{\frac{c_n}{q}-\frac{c_j}{q}\left(1-e^{\theta_2(a_i,c_j)x}\right)}{e^{\theta_1(a_i,c_j){x}}-e^{\theta_2(a_i,c_j){x}}}=\frac{\frac{c_n}{q}-\frac{c_j}{q}+\frac{c_j}{q}e^{\theta_2(a_i,c_j)x}}{e^{\theta_1(a_i,c_j){x}}-e^{\theta_2(a_i,c_j){x}}}>0.
	\end{equation}
    Moreover,
	\begin{equation}\label{G tends to 0}
		\lim_{x\to\infty}G^{\mathcal{A}}_{i,j}(x)=\frac{\frac{c_n}{q}}{\lim_{x\to\infty}\left(e^{\theta_1(a_i,c_j){x}}-e^{\theta_2(a_i,c_j){x}}\right)}=0.
	\end{equation}
    Combining \eqref{G is positive} and \eqref{G tends to 0} with the definition of $G^{\mathcal{A}}_{i,j}(0)$ and the continuity of $G^{\mathcal{A}}_{i,j}$ on $[0,\infty)$ implies that $G^{\mathcal{A}}_{i,j}$ attains its maximum in $[0,\infty)$. Since $G^{\mathcal{A}}_{i,j}$ is continuous on $[0,\infty)$, then the set $\argmax_{\tilde{x}\in[0,\infty)}G^{\mathcal{A}}_{i,j}(\tilde{x})$ is closed. Hence, its minimum exists. The same analysis holds for $G^{\mathcal{C}}_{i,j}$ and $G^{\mathcal{E}}_{i,j}$.
\end{proof}

    \begin{mylm}\label{lemma on existence of U}
		$U^*_{\mathcal{A}}$, $U^*_{\mathcal{C}}$, and $U^*_{\mathcal{E}}$ exist.
	\end{mylm}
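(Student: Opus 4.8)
The idea is to exhibit each of $U^*_{\mc{A}}$, $U^*_{\mc{C}}$, $U^*_{\mc{E}}$ as the member $U_{K^*}$ of the explicit one-parameter family of solutions of $\mc{L}^{a_i,c_j}(U)=0$ on $[0,\infty)$ with $U(0)=0$, the free constant being set equal to the maximum of the corresponding auxiliary function $G^{\mc{A}}_{i,j}$, $G^{\mc{C}}_{i,j}$ or $G^{\mc{E}}_{i,j}$. I carry out the argument for $U^*_{\mc{A}}$; the cases $U^*_{\mc{C}}$ and $U^*_{\mc{E}}$ are identical after replacing $(\wopt(\cdot,a_{i+1},c_j),G^{\mc{A}}_{i,j})$ by $(\wopt(\cdot,a_i,c_{j+1}),G^{\mc{C}}_{i,j})$ for $U^*_{\mc{C}}$ and by $(\wopt(\cdot,a_{i+1},c_{j+1}),G^{\mc{E}}_{i,j})$ for $U^*_{\mc{E}}$, and the degenerate index ranges $i=m$ or $j=n$ are treated the same way. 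Fix $i<m$ and $j<n$. Proceeding along the backward recursion of the preceding paragraphs, I may regard $g:=\wopt(\cdot,a_{i+1},c_j)$ as an already-constructed function that is continuous on $[0,\infty)$, bounded above by $\ol{c}/q$, and satisfies $g(0)=0$.

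By \eqref{ode solution base} the solutions of $\mc{L}^{a_i,c_j}(U)=0$ on $[0,\infty)$ with $U(0)=0$ are precisely $U_K(x)=K\bigl[e^{\theta_1(a_i,c_j)x}-e^{\theta_2(a_i,c_j)x}\bigr]+\tfrac{c_j}{q}\bigl[1-e^{\theta_2(a_i,c_j)x}\bigr]$ for $K\in\mathbb{R}$. Since $\theta_2(a_i,c_j)<0<\theta_1(a_i,c_j)$ by Proposition \ref{claim on theta 1 and 2}(i), the bracket $e^{\theta_1(a_i,c_j)x}-e^{\theta_2(a_i,c_j)x}$ is strictly positive for $x>0$, so $K\mapsto U_K$ is pointwise nondecreasing and the family is totally ordered. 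Dividing the inequality $U_K(x)\ge g(x)$ by that positive bracket and comparing with \eqref{G^A function} shows that for $x>0$ one has $U_K(x)\ge g(x)$ if and only if $K\ge G^{\mc{A}}_{i,j}(x)$, whereas at $x=0$ we have $U_K(0)=0=g(0)$ for every $K$. Hence $U_K\ge g$ on all of $[0,\infty)$ exactly when $K\ge\sup_{x>0}G^{\mc{A}}_{i,j}(x)$.

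It remains to show that this supremum is finite and attained, which is exactly the content of Lemma \ref{G has maxima and min exists}: $G^{\mc{A}}_{i,j}$ attains its maximum $K^*:=\max_{x\in[0,\infty)}G^{\mc{A}}_{i,j}(x)\in\mathbb{R}$, and continuity of $G^{\mc{A}}_{i,j}$ at the origin gives $G^{\mc{A}}_{i,j}(0)\le\sup_{x>0}G^{\mc{A}}_{i,j}(x)$, so $\sup_{x>0}G^{\mc{A}}_{i,j}(x)=K^*$. Thus the set of constants $K$ for which $U_K$ lies above $g$ on $[0,\infty)$ is $[K^*,\infty)$, which is nonempty with least element $K^*$; as $K\mapsto U_K$ is pointwise nondecreasing, $U_{K^*}$ is the smallest solution of $\mc{L}^{a_i,c_j}(U)=0$ with $U(0)=0$ lying above $g$, i.e. $U^*_{\mc{A}}=U_{K^*}$ exists, and likewise $U^*_{\mc{C}}$ and $U^*_{\mc{E}}$. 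The proof is therefore essentially a packaging of Lemma \ref{G has maxima and min exists}; the only step requiring genuine care is the one delegated to that lemma, namely that $\wopt(\cdot,a_{i+1},c_j)$ being bounded forces $G^{\mc{A}}_{i,j}(x)\to0$ as $x\to\infty$ so that the maximum over $[0,\infty)$ is finite, together with continuity up to $x=0$, which lets the supremum over $x>0$ be identified with the full maximum.
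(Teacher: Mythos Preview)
Your argument is correct and follows essentially the same route as the paper: parametrize the solutions of $\mc{L}^{a_i,c_j}(U)=0$ with $U(0)=0$ by a single constant $K$, observe that $U_K\ge \wopt(\cdot,a_{i+1},c_j)$ on $[0,\infty)$ is equivalent to $K\ge G^{\mc{A}}_{i,j}(x)$ for all $x$, and then invoke Lemma~\ref{G has maxima and min exists} to produce the minimal such $K$. Your handling of the endpoint $x=0$ is slightly more explicit than the paper's, but the substance is identical.
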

    \begin{proof}%[Proof of Lemma \ref{lemma on existence of U}]
	From \eqref{ode solution base}, the solutions $U$ of the equation $\mathcal{L}^{a_i,c_j}(U)=0$ in $[0,\infty)$ with boundary condition $U(0)=0$ are of the form
	\begin{equation*}
		U_{k^{i,j}_{\mathcal{A}}}(x)=\frac{c_j}{q}\left[1-e^{\theta_2(a_i,c_j)x}\right]+k^{i,j}_{\mathcal{A}}\left[e^{\theta_1(a_i,c_j)x}-e^{\theta_2(a_i,c_j)x}\right].
	\end{equation*}
    Let $k^{i,j}_{\mathcal{A}}\geq 0$. From \eqref{G^A function}, $U_{k^{i,j}_{\mathcal{A}}}(x)\geq \wopt(x,a_{i+1},c_j)$ for all $x\geq 0$ if and only if $k^{i,j}_{\mathcal{A}}\geq G_{i,j}^{\mathcal{A}}(x)$ for all $x\geq 0$. Using Lemma \ref{G has maxima and min exists}, there exists $\hat{k}^{i,j}_{\mathcal{A}}=\max_{x\in[0,\infty)}G^{\mathcal{A}}_{i,j}(x)>0$. Hence, $U^*_{\mathcal{A}}=U_{\hat{k}^{i,j}_{\mathcal{A}}}$. The proof for $U^*_{\mathcal{C}}$, and $U^*_{\mathcal{E}}$ is similar.
\end{proof}

    The following lemma establishes the differentiability and continuity of $W^{y^*,z^*}$.
		
	\begin{mylm}\label{W is cont diff}
		$\wopt(x,a_i,c_j)$ is infinitely continuously differentiable at all $x\in[0,\infty)\setminus\{y^*_{k,j}:k=i,\ldots,m-1\}\cup\{z^*(c_{i,l}):l=j,\ldots,n-1\}$ and is continuously differentiable at the points $y^*_{k,j}$ and $z^*_{i,l}$ for $k=i,\ldots,m-1$ and $l=j,\ldots,n-1$.
	\end{mylm}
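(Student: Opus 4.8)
The plan is to argue by backward induction on the index pair $(i,j)$, using the recursive representation of $\wopt$ in Proposition~\ref{Recursive formula W}. For the base case $(i,j)=(m,n)$ one simply observes that $\wopt(x,a_m,c_n)=\tfrac{c_n}{q}\bigl[1-e^{\theta_2(a_m,c_n)x}\bigr]$ is $C^\infty$ on $[0,\infty)$. For the inductive step, fix $(i,j)$ with $i<m$ or $j<n$, assume the claim for $(i+1,j)$, $(i,j+1)$ and $(i+1,j+1)$, and set $\tau:=y^*_{i,j}\wedge z^*_{i,j}$. On $[0,\tau)$, writing $p_{i,j}(x)=\tfrac{c_j}{q}\bigl[1-e^{\theta_2(a_i,c_j)x}\bigr]$ and $D(x):=e^{\theta_1(a_i,c_j)x}-e^{\theta_2(a_i,c_j)x}$, Proposition~\ref{Recursive formula W} gives
\[
\wopt(x,a_i,c_j)=p_{i,j}(x)+k^{y^*,z^*}(a_i,c_j)\,D(x),
\]
a finite linear combination of exponentials, hence $C^\infty$; on $(\tau,\infty)$ the same proposition identifies $\wopt(\cdot,a_i,c_j)$ with one of $\wopt(\cdot,a_{i+1},c_j)$, $\wopt(\cdot,a_i,c_{j+1})$, $\wopt(\cdot,a_{i+1},c_{j+1})$ according to the sign of $y^*_{i,j}-z^*_{i,j}$, each of which is $C^\infty$ off a finite set and $C^1$ everywhere by the induction hypothesis. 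Concatenating shows $\wopt(\cdot,a_i,c_j)$ is $C^\infty$ away from $\tau$ together with the finitely many thresholds inherited from the relevant ``next'' function.

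The heart of the argument is the $C^1$ (``smooth-fit'') property at $\tau$. If $\tau=0$ there is nothing to prove, since $\wopt(\cdot,a_i,c_j)$ is then one of the ``next'' functions. If $\tau>0$, I would treat the representative case $\tau=g^{\mc{A}}_{i,j}$ (the cases $\tau=g^{\mc{C}}_{i,j}$ and $\tau=g^{\mc{E}}_{i,j}$ being identical after replacing the obstacle $\wopt(\cdot,a_{i+1},c_j)$ by $\wopt(\cdot,a_i,c_{j+1})$ or $\wopt(\cdot,a_{i+1},c_{j+1})$). With $f(x):=\wopt(x,a_{i+1},c_j)-p_{i,j}(x)$ one has $G^{\mc{A}}_{i,j}=f/D$ on $(0,\infty)$ and, by Lemma~\ref{G has maxima and min exists} and the definition of $g^{\mc{A}}_{i,j}$, $k^{y^*,z^*}(a_i,c_j)=\max_x G^{\mc{A}}_{i,j}(x)=G^{\mc{A}}_{i,j}(\tau)=f(\tau)/D(\tau)$. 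Since $f$ is $C^1$ near $\tau$ by the induction hypothesis, $D$ is smooth with $D(\tau)>0$, and $\tau$ is a maximizer lying in the open set $(0,\infty)$, the first-order condition $(G^{\mc{A}}_{i,j})'(\tau)=0$, i.e.
\[
f'(\tau)\,D(\tau)=f(\tau)\,D'(\tau),
\]
is available. Inserting this in the displayed formula, the left derivative of $\wopt(\cdot,a_i,c_j)$ at $\tau$ equals $p_{i,j}'(\tau)+\tfrac{f(\tau)}{D(\tau)}D'(\tau)=p_{i,j}'(\tau)+f'(\tau)$, which is exactly the derivative of $p_{i,j}+f=\wopt(\cdot,a_{i+1},c_j)$ at $\tau$, hence the right derivative of $\wopt(\cdot,a_i,c_j)$ there; so the one-sided derivatives agree. (Equivalently, by Lemma~\ref{lemma on existence of U}, since $U^*_{\mc{A}}$ is the smallest solution of $\mc{L}^{a_i,c_j}(U)=0$ dominating the obstacle, its graph is tangent to the obstacle's graph at their first contact point $\tau$.)

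I expect this smooth-fit step to be the main obstacle, since a priori the ODE-solution piece and the ``next'' function only agree in value at $\tau$, and one must rule out a corner; the resolution is structural rather than computational, because $\tau$ is by construction a maximizer of the ratio $G^{\mc{A}}_{i,j}$ (equivalently the first contact point of $U^*_{\mc{A}}$ with its obstacle), so the stationarity condition supplies the matching of first derivatives automatically. One should note that this argument yields $C^1$ but in general not $C^2$ at $\tau$, consistent with the statement, because on the two sides of $\tau$ the function solves the distinct equations $\mc{L}^{a_i,c_j}(\cdot)=0$ and (for instance) $\mc{L}^{a_{i+1},c_j}(\cdot)=0$. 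Finally, the degenerate situations in which $\max_x G^{k}_{i,j}(x)=0$ for some $k\in\{\mc{A},\mc{C},\mc{E}\}$ and the threshold is taken as $h^{k}_{i,j}$ would be handled directly: there $k^{y^*,z^*}(a_i,c_j)=0$, so $\wopt(\cdot,a_i,c_j)=p_{i,j}$ on $[0,\tau)$ is itself $C^\infty$, and the fit at $\tau$ holds within the chosen tolerance $\delta$.
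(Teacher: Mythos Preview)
Your proposal is correct and follows the same recursive/inductive approach as the paper, which simply states that the base case $\wopt(x,a_m,c_n)$ is $C^\infty$ and that ``by construction and a recursive argument, the result follows.'' Your explicit smooth-fit argument at $\tau$ via the first-order condition $(G^{\mc{A}}_{i,j})'(\tau)=0$ is precisely the content hidden behind the paper's phrase ``by construction'' (and is in fact used explicitly in the paper's proof of the subsequent Lemma~\ref{Wxx inequality}).
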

    \begin{proof}
        The result follows directly by construction and a recursive argument. 
    \end{proof}

	The next lemma proves an inequality for $\wopt_{xx}$ at a neighborhood of threshold values. The result can be used to prove that $\wopt$ is a viscosity solution since $\wopt_{xx}$ may not exist as stated in Lemma \ref{W is cont diff}.
	
	\begin{mylm}\label{Wxx inequality}
		For $1\leq i\leq m-1$ and $1\leq j\leq n-1$, we have $\wopt_{xx}(y^{*-}_{i,j},a_i,c_j)-\wopt_{xx}(y^{*+}_{i,j},a_i,c_j)\geq 0$ and $\wopt_{xx}(z^{*-}_{i,j},a_i,c_j)-\wopt_{xx}(z^{*+}_{i,j},a_i,c_j)\geq 0.$
    \end{mylm}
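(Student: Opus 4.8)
\noindent\emph{Proof idea.} The plan is to argue pointwise near each finite threshold by a one--dimensional comparison between $\wopt(\cdot,a_i,c_j)$ and the \emph{smooth} auxiliary function $U^*_{\mc{A}}$ (respectively $U^*_{\mc{C}}$, or $U^*_{\mc{E}}$ when the two thresholds coincide) furnished by Lemma~\ref{lemma on existence of U}. I will carry out the $y^*_{i,j}$ case; the $z^*_{i,j}$ case is identical with $U^*_{\mc{C}}$ and $\wopt(\cdot,a_i,c_{j+1})$ in place of $U^*_{\mc{A}}$ and $\wopt(\cdot,a_{i+1},c_j)$, and when $y^*_{i,j}=z^*_{i,j}$ one runs the same steps with $U^*_{\mc{E}}$ and $\wopt(\cdot,a_{i+1},c_{j+1})$, which yields both inequalities simultaneously; by the case analysis that determines the optimal thresholds $(y^*_{i,j},z^*_{i,j})$ these alternatives are exhaustive, and if $y^*_{i,j}\in\{0,+\infty\}$ there is nothing to prove, so I assume $y^*_{i,j}\in(0,\infty)$.

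The first step is to record the two properties of $U^*_{\mc{A}}$ that drive the proof: it solves a linear constant--coefficient second--order ODE, hence is $C^\infty$ on $[0,\infty)$; and, by construction, $U^*_{\mc{A}}(x)\ge\wopt(x,a_{i+1},c_j)$ for \emph{every} $x\ge 0$, with $\wopt(x,a_i,c_j)=U^*_{\mc{A}}(x)$ on $[0,y^*_{i,j}]$ and $\wopt(x,a_i,c_j)=\wopt(x,a_{i+1},c_j)$ on $[y^*_{i,j},\infty)$. I would then set $\eta:=U^*_{\mc{A}}-\wopt(\cdot,a_i,c_j)$ on a neighbourhood of $y^*_{i,j}$, so that $\eta\equiv 0$ to the left of $y^*_{i,j}$, $\eta=U^*_{\mc{A}}-\wopt(\cdot,a_{i+1},c_j)\ge 0$ to the right, and $\eta(y^*_{i,j})=0$. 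By Lemma~\ref{W is cont diff}, $\wopt(\cdot,a_i,c_j)$ is $C^1$ at $y^*_{i,j}$, so $\eta$ is $C^1$ across $y^*_{i,j}$; as $y^*_{i,j}$ minimises $\eta$, this gives $\eta'(y^*_{i,j})=0$. On a right half--neighbourhood $(y^*_{i,j},y^*_{i,j}+\varepsilon)$ with $\varepsilon$ small, $\wopt(\cdot,a_i,c_j)$ coincides with $\wopt(\cdot,a_{i+1},c_j)$, which is $C^\infty$ there (it has only finitely many non--smooth points, by Lemma~\ref{W is cont diff}) and whose second derivative has a one--sided limit $\wopt_{xx}(y^{*+}_{i,j},a_{i+1},c_j)$; hence $\eta\in C^1([y^*_{i,j},y^*_{i,j}+\varepsilon))\cap C^2((y^*_{i,j},y^*_{i,j}+\varepsilon))$ with $\eta''$ extending continuously to $y^{*+}_{i,j}$.

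The next step is a Taylor expansion with Lagrange remainder on the right of $y^*_{i,j}$: for small $h>0$ there is $\xi_h\in(y^*_{i,j},y^*_{i,j}+h)$ with $0\le\eta(y^*_{i,j}+h)=\eta(y^*_{i,j})+h\,\eta'(y^*_{i,j})+\tfrac{1}{2}h^2\eta''(\xi_h)=\tfrac{1}{2}h^2\eta''(\xi_h)$, so $\eta''(\xi_h)\ge 0$ and, letting $h\downarrow 0$, $\eta''(y^{*+}_{i,j})\ge 0$; on the left $\eta\equiv 0$, so $\eta''(y^{*-}_{i,j})=0$. Since $U^*_{\mc{A}}$ is $C^2$ its second derivative is continuous across $y^*_{i,j}$, and from $\wopt(\cdot,a_i,c_j)=U^*_{\mc{A}}-\eta$ on each side of $y^*_{i,j}$ one obtains
\[
\wopt_{xx}(y^{*-}_{i,j},a_i,c_j)-\wopt_{xx}(y^{*+}_{i,j},a_i,c_j)
=(U^*_{\mc{A}})''(y^*_{i,j})-\bigl[(U^*_{\mc{A}})''(y^*_{i,j})-\eta''(y^{*+}_{i,j})\bigr]
=\eta''(y^{*+}_{i,j})\ge 0,
\]
which is the assertion; the $z$--inequality follows in the same way.

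The main obstacle I anticipate is regularity bookkeeping at the threshold rather than any computation: one must know that $\eta$ is genuinely $C^1$ at $y^*_{i,j}$ (so that $\eta'(y^*_{i,j})=0$, not merely that the one--sided derivatives have opposite signs) and that $\eta''$ admits honest one--sided limits (so that the mean--value step yields $\eta''(y^{*+}_{i,j})\ge 0$ and the final identity is literal, not asymptotic). Both rest on Lemma~\ref{W is cont diff} together with the explicit piecewise structure of $\wopt$ --- on each piece it is either a classical solution of one of the linear equations $\mc{L}^{a,c}(U)=0$ or a copy of a lower--level value function, and the sets of thresholds are finite. A secondary point to handle with care is that the domination $U^*_{\mc{A}}\ge\wopt(\cdot,a_{i+1},c_j)$ must hold on all of $[0,\infty)$, not just on $[0,y^*_{i,j}]$; this is precisely the defining property of the auxiliary problem in Lemma~\ref{lemma on existence of U}, and it is what makes $\eta\ge 0$ to the right of $y^*_{i,j}$ and hence forces the downward jump of $\wopt_{xx}$.
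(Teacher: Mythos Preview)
Your proof is correct and follows essentially the same approach as the paper's: both use that $U^*_{\mc{A}}-\wopt(\cdot,a_{i+1},c_j)$ attains its minimum $0$ at $y^*_{i,j}$ (the defining property from Lemma~\ref{lemma on existence of U}), so its first derivative vanishes and its right one-sided second derivative is nonnegative there, which together with the piecewise identity $\wopt(\cdot,a_i,c_j)=U^*_{\mc{A}}\mathds{1}_{[0,y^*_{i,j})}+\wopt(\cdot,a_{i+1},c_j)\mathds{1}_{[y^*_{i,j},\infty)}$ yields the jump inequality. Your Taylor-with-remainder step and the regularity bookkeeping you flag merely make explicit what the paper's three-line proof leaves to the reader; the substance is identical.
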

    \begin{proof}%[Proof of Lemma \ref{Wxx inequality}]
	We prove the first inequality. By definition, it must hold that $U_{\mathcal{A}}^*(\cdot)-W^{y^*,z^*}(\cdot,a_{i+1},c_j)$ reaches the minimum at $y^*_{i,j}$. It then follows that $\frac{d}{dx}U_{\mathcal{A}}^*(y^*_{i,j})-W_x^{y^*,z^*}(y^*_{i,j},a_{i+1},c_j)=0$. Moreover, it holds that
	\begin{equation*}
		W^{y^*,z^*}(x,a_i,c_j)=U^*_{\mathcal{A}}(x)\mathbf{1}_{\{x<y_{i,j}^*\}}+W^{y^*,z^*}(x,a_{i+1},c_j)\mathbf{1}_{\{x\geq y_{i,j}^*\}}.
	\end{equation*}
    Hence,
	\begin{equation*}
		W_{xx}^{y^*,z^*}(y^*_{i,j}-,a_i,c_j)-W_{xx}^{y^*,z^*}(y^*_{i,j}+,a_i,c_j)=\frac{d^2}{dx^2}U^*_{\mathcal{A}}(y_{i,j}^*)-W_{xx}^{y^*,z^*}(y^*_{i,j}+,a_{i+1},c_j)\geq 0.
	\end{equation*}
    The proof for the second inequality is similar.
\end{proof}

\section{Proofs of the Results in Section \ref{section:hjb}}\label{appendix: b}

\begin{proof}[Proof of Theorem \ref{verification discrete}]
    (Part I) We first show that $V^{a_i,c_j}$ is a viscosity supersolution.  {By Proposition \ref{bound and monotone}, $V^{a_{i+1},c_j}(x)-V^{a_i,c_j}(x)\leq 0$ and $V^{a_i,c_{j+1}}(x)-V^{a_i,c_j}(x)\leq 0$ in $(0,\infty)$.} 
	
	Consider an $x\in(0,\infty)$ and the admissible strategy $\pi:=(A,C)\in\Pi^{\mathscr{A},\mathscr{C}}_{x,a_i,c_j}$, which retains incoming claims at a constant rate $a_i$ and pays dividends at a constant rate $c_j$ up to the ruin time $\tau_{\pi}$. Let $\{X^{\pi}_t\}_{t\geq 0}$ be the corresponding reserve process. Suppose there exists a test function $\varphi$ for supersolution of \eqref{hjb main} at $x$. Then $\varphi\leq V^{a_i,c_j}$ and $\varphi(x)=V^{a_i,c_j}(x)$. 
	
	We want to prove that $\mathcal{L}^{a_i,c_j}(\varphi)(x)\leq 0$. Since $\mathcal{L}^{a_i,c_j}(\varphi)(\cdot)$ may be unbounded, we consider an auxiliary function $\tilde{\varphi}$ for the supersolution of \eqref{hjb main} such that $\tilde{\varphi}\leq \varphi\leq V^{a_i,c_j}$ in $[0,\infty)$, $\tilde{\varphi}=\varphi$ in $[0,2x]$ and $\mathcal{L}^{a_i,c_j}(\tilde{\varphi})(\cdot)$ is bounded in $[0,\infty)$. We construct $\tilde{\varphi}$ by considering a function $g:[0,\infty)\to[0,1]$ such that $g=0$ in $[2x+1,\infty)$ and $g=1$ in $[0,2x]$. Define $\tilde{\varphi}(w)=\varphi(w)g(w)$. Using Lemma \ref{dpp}, we obtain for $h>0$
	\begin{equation*}
		\tilde{\varphi}(x)=V^{a_i,c_j}(x)\geq \mathbb{E}\left[\int_0^{\tau_{\pi}\wedge h}e^{-qs}c_j\phantom{.}ds\right]+\mathbb{E}\left[e^{-q(\tau_{\pi}\wedge h)}\tilde{\varphi}\left(X_{\tau_{\pi}\wedge h}^{\pi}\right)\right].
	\end{equation*}
	Using Itô's formula and \eqref{HJB base} yields
	\begin{equation}\label{inequality expectation}
    \begin{aligned}
		0&\geq\mathbb{E}\left[\int_0^{\tau_{\pi}\wedge h}e^{-qs}c_j\phantom{.}ds\right]+\mathbb{E}\left[e^{-q(\tau_{\pi}\wedge h)}\tilde{\varphi}\left(X_{\tau_{\pi}\wedge h}^{\pi}\right)-\tilde{\varphi}(x)\right]\\
		&=\mathbb{E}\left[\int_0^{\tau_{\pi}\wedge h}e^{-qs}c_j\phantom{.}ds\right]+{\mathbb{E}\left[\int_0^{\tau_{\pi}\wedge h}e^{-qs}\tilde{\varphi}_x(X_s^{\pi})\sigma dW_s\right]}\\
  &\quad+\mathbb{E}\left[\int_0^{\tau_{\pi}\wedge h}e^{-qs}\left((\mu a_i-b-c_j)\tilde{\varphi}_x(X_s^{\pi})+\frac{\sigma^2a_i^2}{2}\tilde{\varphi}_{xx}(X_s^{\pi})-q\tilde{\varphi}(X_s^{\pi})\right)ds\right]\\
		&=\mathbb{E}\left[\int_0^{\tau_{\pi}\wedge h}e^{-qs}\mathcal{L}^{a_i,c_j}(\tilde{\varphi})({X_s^{\pi}})ds\right].
    \end{aligned}
	\end{equation}
	{The term $\mathbb{E}\left[\int_0^{\tau_{\pi}\wedge h}e^{-qs}\tilde{\varphi}_x(X_s^{\pi})\sigma dW_s\right]$ vanishes to 0 because the integrand is bounded. Since $\varphi$ is continuous and differentiable, it is locally bounded, which implies that it is locally Lipschitz. Consequently, $\varphi_x=\tilde \varphi_x$ is bounded.} Moreover, the term $\mathcal{L}^{a_i,c_j}(\tilde{\varphi})(x)$ is well-defined, even though $g$ is not twice continuously differentiable everywhere, since $\tilde{\varphi}$ is twice continuously differentiable for any $w<2x$. 
	
	Since $\tau_{\pi}>0$ a.s. and the following results hold
	\begin{equation*}
		\left|\frac{1}{h}\int_0^{\tau_{\pi}\wedge h}e^{-qs}\mathcal{L}^{a_i,c_j}(\tilde{\varphi})(X_s^{\pi})ds\right|\leq \sup_{w\in[0,\infty)}\left|\mathcal{L}^{a_i,c_j}(\tilde{\varphi})(w)\right|,
	\end{equation*}
	and
	\begin{equation*}
		\lim_{h\to 0^+}\frac{1}{h}\int_0^{\tau_{\pi}\wedge h}e^{-qs}\mathcal{L}^{a_i,c_j}(\tilde{\varphi})(X_s^{\pi})ds=\mathcal{L}^{a_i,c_j}(\tilde{\varphi})(x)\mbox{ a.s.,}
	\end{equation*}
	then, via the bounded convergence theorem and inequality \eqref{inequality expectation},
	\begin{equation*}
		\mathcal{L}^{a_i,c_j}(\varphi)(x)=\mathcal{L}^{a_i,c_j}(\tilde{\varphi})(x)\leq 0.
	\end{equation*}
	Thus, $V^{a_i,c_j}$ is a viscosity supersolution at $x$.
	
	(Part II) We now show that $V^{a_i,c_j}$ is a viscosity subsolution of \eqref{hjb main} via contradiction. Suppose otherwise that $V^{a_i,c_j}$ is not a viscosity subsolution. Then there exist $\epsilon>0$, $0<h<\frac{x}{2}$ and a twice continuously differentiable function $\psi$ with $\psi(x)=V^{a_i,c_j}(x)$ such that $\psi\geq V^{a_i,c_j}$,
	\begin{equation}\label{contra sub}
		\max\{\mathcal{L}^{a_i,c_j}(V^{a_i,c_j})(w),V^{a_{i+1},c_{j}}(w)-V^{a_i,c_j}(w),V^{a_i,c_{j+1}}(w)-V^{a_i,c_j}(w)\}\leq -q\epsilon<0
	\end{equation}
	for $w\in[x-h,x+h]$, and
	\begin{equation}\label{contra sub 2}
		V^{a_i,c_j}(w)\leq \psi(w)-\epsilon
	\end{equation}
	for $w\notin[x-h,x+h]$.
	
	Consider the reserve process $\{X^{\pi}_t\}_{t\geq 0}$ corresponding to an admissible strategy $\pi\in\Pi^{\mathscr{A},\mathscr{C}}_{x,a_i,c_j}$. Define the stopping time $\tau^*$ given by
	\begin{equation*}
		\tau^*:=\inf\{t>0:X_t^{\pi}\notin[x-h,x+h]\}.
	\end{equation*}
	Taking expectations and using \eqref{contra sub} yield
	{\begin{equation}\label{contra sub 3}
    \begin{aligned}
		\mathbb{E}\left[e^{-q(\tau_{\pi}\wedge\tau^*)}\psi(X^{\pi}_{\tau^*})\right]-\psi(x)&=\mathbb{E}\left[\int_0^{\tau_{\pi}\wedge\tau^*}e^{-qs}\mathcal{L}^{a_i,c_j}(\psi)(X^{\pi}_s)ds-\int_0^{\tau_{\pi}\wedge\tau^*}e^{-qs}c_j\phantom{.}ds\right]\\
		&\leq \mathbb{E}\left[\int_0^{\tau_{\pi}\wedge\tau^*}e^{-qs}(-q\epsilon)ds+\frac{c_j}{q}\left(e^{-q(\tau_{\pi}\wedge\tau^*)}-1\right)\right]\\
		&=\left(\epsilon+\frac{c_j}{q}\right)\mathbb{E}\left[e^{-q(\tau_{\pi}\wedge\tau^*)}-1\right].
        \end{aligned}
	\end{equation}}
	From \eqref{contra sub 2} and \eqref{contra sub 3},
	{\begin{equation*}
    \begin{aligned}
		\mathbb{E}\left[e^{-q(\tau_{\pi}\wedge\tau^*)}V^{a_i,c_j}(X^{\pi}_{\tau_{\pi}\wedge\tau^*})\right]&\leq \mathbb{E}\left[e^{-q(\tau_{\pi}\wedge\tau^*)}\left(\psi(X^{\pi}_{\tau_{\pi}\wedge\tau^*})-\epsilon\right)\right]\\
		&\leq \psi(x)+\left(\epsilon+\frac{c_j}{q}\right)\mathbb{E}\left[e^{-q(\tau_{\pi}\wedge\tau^*)}-1\right]-\epsilon\mathbb{E}\left[e^{-q(\tau_{\pi}\wedge\tau^*)}\right]\\
		&=\psi(x)+\frac{c_j}{q}\mathbb{E}\left[e^{-q(\tau_{\pi}\wedge\tau^*)}-1\right]-\epsilon.
        \end{aligned}
	\end{equation*}}
	Using Lemma \ref{dpp}, we have
	{\begin{equation*}
		V^{a_i,c_j}(x)=\sup_{\pi\in\Pi^{\mathscr{A},\mathscr{C}}_{x,a_i,c_j}}\mathbb{E}\left[-\frac{c_j}{q}\left(e^{-q(\tau_{\pi}\wedge\tau^*)}-1\right)+e^{-q(\tau_{\pi}\wedge\tau^*)}V^{a_i,c_j}(X^{\pi}_{\tau_{\pi}\wedge\tau^*})\right]\leq \psi(x)-\epsilon,
	\end{equation*}}
	which contradicts the assumption that $V^{a_i,c_j}(x)=\psi(x)$. Therefore, $V^{a_i,c_j}$ is a viscosity subsolution of \eqref{hjb main}.

    The uniqueness result is a direct consequence of Propositions \ref{Lipschitz property} and \ref{comparison viscosity}.
\end{proof}

\begin{proof}[Proof of Theorem \ref{y and z optimal implies verification}]

    By definition, $\wopt(x,a_m,c_n)=V^{a_m,c_n}(x)$. Assume that $\wopt(\cdot,a_k,c_l)=V^{a_k,c_l}$ for $k=i+1,\ldots,m$ and $l=j+1,\ldots,n$. Moreover, assume that $\wopt(\cdot,a_{i+1},c_j)=V^{a_{i+1},c_j}$ and $\wopt(\cdot,a_i,c_{j+1})=V^{a_i,c_{j+1}}$.
	
	By construction, we know that
	\begin{equation*}
		\begin{cases}
			V^{a_{i+1},c_j}(x)-\wopt(x,a_i,c_j)=0,&\mbox{if $x\geq y^*_{i,j}$},\\
			V^{a_{i+1},c_j}(x)-\wopt(x,a_i,c_j)\leq 0,&\mbox{if $x< y^*_{i,j}$},\\
			V^{a_{i},c_{j+1}}(x)-\wopt(x,a_i,c_j)=0,&\mbox{if $x\geq z^*_{i,j}$},\\
			V^{a_{i},c_{j+1}}(x)-\wopt(x,a_i,c_j)\leq 0,&\mbox{if $x< z^*_{i,j}$},\\
			\mathcal{L}^{a_i,c_j}\left(\wopt\right)(x,a_i,c_j)=0,&\mbox{if $x<y^*_{i,j}\wedge z^*_{i,j}$}.
		\end{cases}
	\end{equation*}
    Suppose Theorem \ref{verification discrete} holds. It suffices to show that $\wopt(\cdot,a_i,c_j)$ is a viscosity solution of \eqref{hjb main}. It remains to show the following:
	\begin{equation*}
		\mathcal{L}^{a_i,c_j}\left(\wopt\right)(x,a_i,c_j)\leq 0\quad\mbox{for all $x\geq y^*_{i,j}\wedge z^*_{i,j}$}.
	\end{equation*}
	
	%		\begin{cases}
		%			V^{a_{i+1},c_j}(x)-\wopt(x,a_i,c_j)\leq 0,&\mbox{if $x< y^*_{i,j}$}\\
		%			V^{a_{i},c_{j+1}}(x)-\wopt(x,a_i,c_j)\leq 0,&\mbox{if $x< z^*_{i,j}$}\\
		%		\end{cases}
	
	Fix $j$. Suppose $x\neq y^*_{k,j}$ for $k=i+1,\ldots,m-1$. Then $x$ belongs to one of the open intervals wherein $\mathcal{L}^{a_k,c_j}(\wopt)(x,a_i,c_j)=0$. Suppose $\mathcal{L}^{a_i,c_j}(\wopt)(x,a_i,c_j)>0$. Then,
	\begin{equation}\label{ineq contradict}
    \begin{aligned}
		0&<\mathcal{L}^{a_i,c_j}(\wopt)(x,a_i,c_j)-\mathcal{L}^{a_k,c_j}(\wopt)(x,a_i,c_j)\\
		&=\frac{1}{2}\sigma^2(a_i^2-a_k^2)\wopt_{xx}(x,a_i,c_j)+\mu(a_i-a_k)\wopt_x(x,a_i,c_j).
        \end{aligned}
	\end{equation}
	
	%		Then,
	%		\begin{align*}
		%			\mathcal{L}^{a_i,c_j}(\wopt)(x,a_i,c_j)&=\mathcal{L}^{a_k,c_j}(\wopt)(x,a_i,c_j)+\frac{1}{2}\sigma^2(a_i^2-a_k^2)\wopt_{xx}(x,a_i,c_j)+(a_i-a_k)\wopt_x(x,a_i,c_j).
		%		\end{align*}
	%		We have $\mathcal{L}^{a_i,c_j}(\wopt)(x,a_i,c_j)\leq 0$ if and only if
	%		\begin{align*}
		%			\frac{1}{2}\sigma^2(a_i^2-a_k^2)\wopt_{xx}(x,a_i,c_j)+(a_i-a_k)\wopt_x(x,a_i,c_j)\leq 0.
		%		\end{align*}
	
	There exist $\delta>0$ and some $k>i$ such that $\mathcal{L}^{a_k,c_j}(\wopt)(x,a_i,c_j)=0$ in $(y^*_{i,j},y^*_{i,j}+\delta)$. Then,
	\begin{equation*}
		\mathcal{L}^{a_k,c_j}(\wopt)(y^{*+}_{i,j},a_i,c_j)=0\quad\mbox{and}\quad\mathcal{L}^{a_i,c_j}(\wopt)(y^{*-}_{i,j},a_i,c_j)=0.
	\end{equation*}
    By Lemma \ref{W is cont diff} and inequality \eqref{ineq contradict},
	\begin{equation*}
    \begin{aligned}
		0&=\mathcal{L}^{a_i,c_j}(\wopt)(y^{*-}_{i,j},a_i,c_j)-\mathcal{L}^{a_k,c_j}(\wopt)(y^{*+}_{i,j},a_i,c_j)\\
		&=\frac{1}{2}\sigma^2\left[a_i^2\wopt_{xx}(y^{*-}_{i,j},a_i,c_j)-a_k^2\wopt_{xx}(y^{*+}_{i,j},a_i,c_j)\right]+\mu(a_i-a_k)\wopt_x(y^{*-}_{i,j},a_i,c_j)\\
		&>\frac{1}{2}\sigma^2\left[a_i^2\wopt_{xx}(y^{*-}_{i,j},a_i,c_j)-a_k^2\wopt_{xx}(y^{*+}_{i,j},a_i,c_j)\right]-\frac{1}{2}\sigma^2(a_i^2-a_k^2)\wopt_{xx}(y^{*-}_{i,j},a_i,c_j)\\
		&=\frac{1}{2}\sigma^2a_k^2\left[\wopt_{xx}(y^{*-}_{i,j},a_i,c_j)-\wopt_{xx}(y^{*+}_{i,j},a_i,c_j)\right].
        \end{aligned}
	\end{equation*}
By Lemma \ref{Wxx inequality}, $\wopt_{xx}(y^{*-}_{i,j},a_i,c_j)-\wopt_{xx}(y^{*+}_{i,j},a_i,c_j)\geq 0$, which is a contradiction to the inequality above. Thus, $\mathcal{L}^{a_i,c_j}(\wopt)(x,a_i,c_j)\leq 0$ for $x\neq y^*_{k,j}$, where $k=i+1,\ldots,m-1$. 
	
	Consider now the case $x=y^*_{k,j}$ with $k=i+1,\ldots,m-1$ and $y^*_{k,j}\geq y^*_{i,j}$. Since $\wopt(x,a_i,c_j)$ may not be twice differentiable at $x=y^*_{k,j}$, we prove that $\mathcal{L}^{a_i,c_j}(\wopt)(x,a_i,c_j)\leq 0$ in the viscosity sense. Take a test function $\varphi_1$ as a supersolution at $y^{*}_{k,j}$. Since $\varphi_1$ is a supersolution, then $\wopt(\cdot,a_i,c_j)-\varphi_1(\cdot)$ achieves its minimum at $y^{*}_{i,j}$. Hence,
	\begin{equation}\label{varphi supersol}
		\varphi_1(y^{*}_{k,j})=\wopt(y^{*}_{k,j},a_i,c_j)\quad\mbox{and}\quad \varphi_1'(y^{*}_{k,j})=\wopt_x(y^{*}_{k,j},a_i,c_j).
	\end{equation}
   Moreover,
	\begin{equation*}
		\wopt_{xx}(y^{*+}_{k,j},a_i,c_j)-\varphi_1''(y^{*}_{k,j})\geq 0\quad\mbox{and}\quad\wopt_{xx}(y^{*-}_{k,j},a_i,c_j)-\varphi_1''(y^{*}_{k,j})\geq0, 
	\end{equation*}
    or, equivalently,
	\begin{equation}\label{varphi 2nd derivative}
		\varphi_1''(y^{*}_{k,j})\leq\min\left\{\wopt_{xx}(y^{*-}_{k,j},a_i,c_j),\wopt_{xx}(y^{*+}_{k,j},a_i,c_j)\right\}.
	\end{equation}
   Using \eqref{varphi supersol} and \eqref{varphi 2nd derivative} yields
	\begin{equation*}
    \begin{aligned}
		\mathcal{L}^{a_i,c_j}(\varphi_1)(y^*_{k,j})&=\frac{1}{2}\sigma^2a_i^2\varphi_1''(y^*_{k,j})+(\mu a_i-b-c_j)\varphi_1'(y^*_{k,j})-q\varphi_1(y^*_{k,j})+c_j\\
		&=\frac{1}{2}\sigma^2a_i^2\varphi_1''(y^*_{k,j})+(\mu a_i-b-c_j)\wopt_x(y^*_{k,j},a_i,c_j)\\
  &\quad-q\wopt(y^*_{k,j},a_i,c_j)+c_j\\
		&\leq\min\left\{\mathcal{L}^{a_i,c_j}(\wopt)(y^{*-}_{k,j},a_i,c_j),\mathcal{L}^{a_i,c_j}(\wopt)(y^{*+}_{k,j},a_i,c_j)\right\}\\
		&=0.
        \end{aligned}
	\end{equation*}
    The proof for the dividend threshold levels is similar.
\end{proof}

\section{Value Function Derivation Using Scale Functions}\label{appendix:c}

    {We present an alternative derivation of the value function using scale functions. 
    Write $X^{i,j}_t=x+(\mu a_i-b-c_j)t+\sigma a_i W_t$ for $i=1,\ldots, m$ and $j=1,\ldots,n$. For a fixed $\beta \geq 0$, we define the following passage times:
    \begin{equation*}
        \tau_{\beta}^{i,j}:=\inf \left\{t\geq 0: X^{i,j}_t \geq \beta\right\},\quad 
        \tau_{0}^{i,j}:=\inf \left\{t\geq 0: X^{i,j}_t <0\right\}.
    \end{equation*}
    We also define the functions $\mathbb W_{i,j}^{q}(x)$ and $\mathbb Z_{i,j}^{q}(x)$ of the process $X^{i,j}:=\{X^{i,j}_t\}_{t\geq 0}$ as
    \begin{equation*}
        \begin{aligned}
            \int_0^{\infty} e^{-ux}\,\mathbb W_{i,j}^{(q)}(x) \, dx&=\frac{1}{\psi_{i,j}(u)-q},\quad u>\theta_1(a_i,c_j),\\
            \mathbb Z_{i,j}^{(q)}(x)&=1+q\int_0^x \mathbb W_{i,j}^{(q)}(y) \,dy,
        \end{aligned}
    \end{equation*}
    where $\psi_{i,j}(u):=\frac{1}{2}\sigma^2 a_i^2u^2+(\mu a_i-b-c_j)u$ is the Laplace exponent of $X^{i,j}$. The functions $\mathbb W^{q}_{i,j}$ and $\mathbb Z^{q}_{i,j}$ are referred to as the \emph{$q$-scale functions} of $X^{i,j}$ in the literature of exit problems for spectrally negative L\'{e}vy processes. 

    Write $\kappa_{i,j}:=\left((\mu a_i-b-c_j)^2+2q\sigma^2a_i^2\right)^{-1/2}$. Using the method of partial fractions and the Laplace inverse transform of $(\psi_{i,j}(u)-q)^{-1}$, we obtain
    \begin{equation*}
        \mathbb W^{(q)}_{i,j}(x)=\kappa_{i,j}\left(e^{\theta_1(a_i,c_j)x}-e^{\theta_2(a_i,c_j)x}\right).
    \end{equation*}
    Consequently, it can be shown that
    \begin{equation*}
        \mathbb Z^{(q)}_{i,j}(x)=\frac{q}{\theta_1(a_i,c_j)}\mathbb W^{(q)}_{i,j}(x)+e^{\theta_2(a_i,c_j)x}.
    \end{equation*}

    Suppose $y_{i,j}<z_{i,j}$. For $x\in [0,y_{i,j})$, we have
    \begin{equation*}
        \begin{aligned}
            \upsilon^{i,j}(x)
            &=\mathbb E\left[\int_0^{\tau_{y_{i,j}}^{i,j}\wedge \tau_{0}^{i,j}}e^{-qt}\, c_j\, dt\right]+\mathbb E\left[e^{-q\tau_{y_{i,j}}^{i,j}}\cdot\mathbf{1}_{\{\tau_{y_{i,j}}^{i,j}< \tau_{0}^{i,j}\}}\right]\upsilon^{i+1,j}(x)\\
            &=\frac{c_j}{q}\left[1-\mathbb E\left[e^{-q\left(\tau_{y_{i,j}}^{i,j}\wedge \tau_{0}^{i,j}\right)}\right]\right]+\mathbb E\left[e^{-q\tau_{y_{i,j}}^{i,j}}\cdot\mathbf{1}_{\{\tau_{y_{i,j}}^{i,j}< \tau_{0}^{i,j}\}}\right]\upsilon^{i+1,j}(x)\\
            &=\frac{c_j}{q}\left[1-\mathbb E\left[e^{-q\tau_{y_{i,j}}^{i,j}}\cdot\mathbf{1}_{\{\tau_{y_{i,j}}^{i,j}< \tau_{0}^{i,j}\}}\right]-\mathbb E\left[e^{-q\tau_{0}^{i,j}}\cdot\mathbf{1}_{\{\tau_{y_{i,j}}^{i,j}> \tau_{0}^{i,j}\}}\right]\right]+\mathbb E\left[e^{-q\tau_{y_{i,j}}^{i,j}}\cdot\mathbf{1}_{\{\tau_{y_{i,j}}^{i,j}< \tau_{0}^{i,j}\}}\right]\upsilon^{i+1,j}(x).
        \end{aligned}
    \end{equation*}
    By \citet[Theorem 8.1(iii)]{kyprianou2014book}, we have 
    \begin{equation*}
    \begin{aligned}
        \mathbb E\left[e^{-q\tau_{y_{i,j}}^{i,j}}\cdot\mathbf{1}_{\{\tau_{y_{i,j}}^{i,j}< \tau_{0}^{i,j}\}}\right]&=\frac{\mathbb W_{i,j}^{q}(x)}{\mathbb W_{i,j}^{q}(y_{i,j})}=\frac{e^{\theta_1(a_i,c_j)x}-e^{\theta_2(a_i,c_j)x}}{e^{\theta_1(a_i,c_j)y_{i,j}}-e^{\theta_2(a_i,c_j)y_{i,j}}},\\
        \mathbb E\left[e^{-q\tau_{0}^{i,j}}\cdot\mathbf{1}_{\{\tau_{y_{i,j}}^{i,j}> \tau_{0}^{i,j}\}}\right]&=\mathbb Z^{(q)}_{i,j}(x)- \mathbb Z^{(q)}_{i,j}(y_{i,j})\frac{\mathbb W_{i,j}^{q}(x)}{\mathbb W_{i,j}^{q}(y_{i,j})}=e^{\theta_2(a_i,c_j)x}+(1-e^{\theta_2(a_i,c_j)y_{i,j}})\frac{\mathbb W_{i,j}^{q}(x)}{\mathbb W_{i,j}^{q}(y_{i,j})}.
    \end{aligned}
    \end{equation*}
    Then,
    \begin{equation*}
        \begin{aligned}
            \upsilon^{i,j}(x)
            &=\frac{c_j}{q}\left[1-e^{\theta_2(a_i,c_j)x}-(1-e^{\theta_2(a_i,c_j)y_{i,j}})\frac{\mathbb W_{i,j}^{q}(x)}{\mathbb W_{i,j}^{q}(y_{i,j})}\right]+\frac{\mathbb W_{i,j}^{q}(x)}{\mathbb W_{i,j}^{q}(y_{i,j})}\upsilon^{i+1,j}(x)\\
            &=\frac{c_j}{q}\left(1-e^{\theta_2(a_i,c_j)x}\right)+\left(e^{\theta_1(a_i,c_j)x}-e^{\theta_2(a_i,c_j)x}\right)\frac{\upsilon^{i+1,j}(x)-\frac{c_j}{q}(1-e^{\theta_2(a_i,c_j)y_{i,j}})}{e^{\theta_1(a_i,c_j)y_{i,j}}-e^{\theta_2(a_i,c_j)y_{i,j}}},
        \end{aligned}
    \end{equation*}
    which yields the form of the value function in Theorem \ref{y and z optimal implies verification} for $i<m$ or $j<n$. Moreover, by construction, if $x>y_{i,j}$, then $\upsilon^{i,j}(x)=\upsilon^{i+1,j}(x)$. Similar arguments apply for the cases $x<z_{i,j}<y_{i,j}$ and $x<y_{i,j}=z_{i,j}$. 

    For the case where $i=m$ and $j=n$, we have $y_{m,n}=z_{m,n}=\infty$. Moreover,
    \begin{equation*}
        \lim_{y\to\infty}\frac{\mathbb W_{i,j}^{q}(x)}{\mathbb W_{i,j}^{q}(y)}=\frac{q}{\theta_1(a_m,c_n)}.
    \end{equation*}
    Hence, we obtain
    \begin{equation*}
        \upsilon^{m,n}(x)= \mathbb E\left[\int_0^{\tau_{0}^{m,n}}e^{-qt}\, c_n\, dt\right]=\frac{c_n}{q}\left[1-\mathbb E\left[e^{-q\tau_{0}^{m,n}}\right]\right]=\frac{c_n}{q}\left(1-e^{\theta_2(a_m,c_n)x}\right),
    \end{equation*}
    which completes the form of the value function in Theorem \ref{y and z optimal implies verification}.}

\bibliographystyle{plainnat} 
\bibliography{UpdatedReferences}

%%%%%%%%%%%%%%%%%
\end{document}